
\documentclass[11pt]{amsart}
\usepackage{amsmath,amssymb,amsfonts,url,mathptmx,xcolor}
\numberwithin{equation}{section}

\newtheorem{thm}{Theorem}[section]
\newtheorem{lem}{Lemma}[section]
\newtheorem{rem}{Remark}[section]
\newtheorem{prop}{Proposition}[section]

  \newcommand{\N}{\mathbb{N}}
  \def\Re{{\rm Re}\,}
\def\Im{{\rm Im}}

\newcommand{\R}{\mathbb{R}}
  \newcommand{\C}{\mathbb{C}}
  \newcommand{\beq}{\begin{equation}}
\newcommand{\eeq}{\end{equation}}
\newcommand{\into}{\int_{\Omega}}
\newcommand{\la}{\lambda}
\newcommand{\e}{\varepsilon}
\def\bbm[#1]{\mbox{\boldmath $#1$}}
\newcommand{\de}{\delta}
\newcommand{\al}{\alpha}

\begin{document}
\title[Singular Liouville Equation]{High Order Vanishing Theorems for Nonsimple Blowup Solutions of Singular Liouville Equation}
\subjclass{35J75,35J61}
\keywords{}

\author{Teresa D'Aprile}
\address{Dipartimento di Matematica, Universit\`a di Roma “Tor Vergata”, via della Ricerca
Scientifica 1, 00133 Roma . Italy.} \email{daprile@mat.uniroma2.it}

\author{Juncheng Wei}
\address{Department of Mathematics \\ Chinese University of Hong Kong\\ Shatin, NT, Hong Kong} \email{wei@math.cuhk.edu.hk }

\author{Lei Zhang} \footnote{The research of J. Wei is partially supported by Hong Kong General Research Fund "New frontiers in singularity analysis of nonlinear partial differential equations". Lei Zhang is partially supported by a Simons Foundation Collaboration Grant. T. D'Aprile acknowledges the MUR Excellence Department Project MatMod@TOV awarded to the Department of Mathematics, University of Rome Tor Vergata, CUP E83C23000330006}
\address{Department of Mathematics\\
        University of Florida\\
        1400 Stadium Rd\\
        Gainesville FL 32611}
\email{leizhang@ufl.edu}

\date{\today}

\begin{abstract} For a singular Liouville equation, it is plausible that a non-simple blowup phenomenon occurs around a quantized singular pole. The presence of complex blowup profiles of bubbling solutions presents substantial challenges in applications. In this article, we demonstrate that under natural assumptions, non-simple blowup takes place only when the derivatives of certain coefficient functions approach zero. Our main result encompasses all previous findings and determines the vanishing order for any specific quantized singular source. Our theorems can be utilized not only to eliminate multiple non-simple blowup scenarios in applications but also to investigate blowup solutions with moving poles.
\end{abstract}

\maketitle

\section{Introduction}
It is well known that certain partial differential equations serve as bridges that connect different fields of mathematics and physics. In particular a mean-field type equation defined on a Riemann surface $(M,g)$ of the form
$$\Delta_g u+2K_g(x)=2K(x)e^v-4\pi\sum_{j=1}^m\alpha_j\delta_{p_j}$$
connects conformal geometry and physics. A singular source $p_j$ is called quantized if the corresponding $\alpha_j$ is a positive integer. One essential difficulty is to study the profile of blow-up solutions if the blowup point happens to be a quantized singular source. When we focus on the locally defined equation, the purpose of the this article is to study the blowup solutions of 
\begin{equation}\label{main-2}
\Delta u+|x|^{2N}\mathrm{H}(x)e^{u}=0, 
\end{equation}
in a neighborhood of the origin in $\mathbb R^2$. Here, $\mathrm{H}$ is a positive smooth function and $N\in \mathbb N$ is a positive integer. Since the analysis is local in nature, we focus the discussion in a neighborhood of the origin:
Let $\mathfrak{u}_k$ be a sequence of solutions of
\begin{equation}\label{t-u-k}
\Delta \mathfrak{u}_k(x)+|x|^{2N}\mathrm{H}_k(x)e^{\mathfrak{u}_k}=0, \quad \mbox{in}\quad B_{\tau}
\end{equation}
for some $\tau>0$ independent of $k$. $B_{\tau}$ is the ball centered at the origin with radius $\tau$. $N$ in this article is a positive integer and we seek to study the profile of blow-up solutions if $0$ is the only blowup point. Now we state the usual assumptions on $\mathfrak{u}_k$ and $\mathrm{H}_k$:
For a positive constant $C$ independent of $k$, the following holds:
\begin{equation}\label{assumption-1}
\left\{\begin{array}{ll}
\|\mathrm{H}_k\|_{C^{N+2}(\bar B_{\tau})}\le C, \quad \frac 1C\le \mathrm{H}_k(x)\le C, \quad x\in \bar B_{\tau}, \\ \\
\int_{B_{\tau}} \mathrm{H}_k e^{\mathfrak{u}_k}\le C,\\  \\
|\mathfrak{u}_k(x)- \mathfrak{u}_k(y)|\le C, \quad \forall x,y\in \partial B_{\tau},
\end{array}
\right.
\end{equation}
and since we study the asymptotic behavior of blowup solutions around the singular source, we assume that there is no blowup point except at the origin:
\begin{equation}\label{assump-2}
\max_{K\subset\subset B_{\tau}\setminus \{0\}} \mathfrak{u}_k\le C(K).
\end{equation}

 If a sequence of solutions $\{u^k\}_{k=1}^{\infty}$ of (\ref{main-2}) satisfies
 $$\lim_{k\to \infty}u^k(x_k)=\infty,\quad \mbox{ for some $\bar x\in B_{\tau}$ and $x_k\to \bar x$,} $$
  we say $\{u^k\}$ is a sequence of bubbling solutions or blowup solutions, $\bar x$ is called a blowup point. The question we consider in this work is, when $0$ is the only blow-up point in a neighborhood of the origin, what vanishing theorems will the coefficient functions $\mathrm{H}_k$ satisfy?

One indispensable assumption is that the blowup solutions violate the spherical Harnack inequality around the origin:
\begin{equation}\label{no-sp-h}
\max_{x\in B_{\tau}} \mathfrak{u}_k(x)+2(1+N)\log |x|\to \infty,
\end{equation}
 It is also mentioned in literature ( see \cite{kuo-lin-jdg, wei-zhang-adv} ) that $0$ is called an non-simple blowup point. The authors have made progress in the study of non-simple blow-up solutions. In particular, Wei and Zhang proved in \cite{wei-zhang-jems} the following Laplacian vanishing theorem:

\emph{Theorem A: (Wei-Zhang):
Let $\{\mathfrak{u}_k\}$ be a sequence of solutions of (\ref{t-u-k}) such that (\ref{assumption-1}),(\ref{assump-2}) hold and the spherical Harnack inequality is violated as in (\ref{no-sp-h}). Then along a sub-sequence
$$\lim_{k\to \infty}\Delta (\log \mathrm{H}_k)(0)=0. $$
}
and in \cite{wei-zhang-plms} the first-order vanishing theorem :

\emph{Theorem B (Wei-Zhang, \cite{wei-zhang-plms}:  Let $\{\mathfrak{u}_k\}$ be a sequence of solutions of (\ref{t-u-k}) such that (\ref{assumption-1}),(\ref{assump-2}) and (\ref{no-sp-h}) hold. Then along a subsequence
$$\lim_{k\to \infty}\nabla (\log\mathrm{H}_k+\phi_k)(0)=0$$ where $\phi_k$ is defined as
\begin{equation}\label{phi-k}
\left\{\begin{array}{ll}
\Delta \phi_k(x)=0,\quad \mbox{in}\quad B_{\tau},\\ \\
\phi_k(x)=\mathfrak{u}_k(x)-\frac{1}{2\pi \tau}\int_{\partial B_{\tau}}\mathfrak{u}_kdS,\quad x\in \partial B_{\tau}.
\end{array}
\right.
\end{equation}
}
The equation (\ref{main-2}) comes from its equivalent form 
$$\Delta v+\mathrm{H}e^v=4\pi N\delta_0 $$
by using a logarithmic function to eliminate the Dirac mass on the right-hand side. The study of blowup solutions for (\ref{main-2}) gives precise description of bubbling profile for global equations such as 
the following mean field equation defined on a Riemann surface $(M,g)$:
\begin{equation}\label{mean-sin}
\Delta_gu+\rho(\frac{h(x)e^{u(x)}}{\int_Mhe^{u}}-1)=4\pi\sum_{t=1}^M \alpha_t (\delta_{p_t}-1).
\end{equation}
Equation (\ref{mean-sin}) describes a conformal metric with prescribed conic singularities (see \cite{erem-3,tr-1,tr-2}). In this context, $h$ is a positive smooth function, $\rho>0$ is a constant, and the volume of $M$ is assumed to be $1$ for convenience. Additionally, $\alpha_j > -1$ are constants. When the singular source is quantized ($\alpha \in \mathbb{N}$), the equation is deeply connected to Algebraic geometry, integrable systems, number theory, and complex Monge-Ampère equations (see \cite{chen-lin-last-cpam}). In physics, this main equation reveals critical features of the mean field limits of point vortices in Euler flow \cite{caglioti-1,caglioti-2}, models in the Chern-Simons-Higgs theory \cite{jackiw}, and the electroweak theory \cite{ambjorn}, among others.

 The non-simple bubbling situation has been observed in the Liouville equation \cite{kuo-lin-jdg,bart3}, Liouville systems \cite{gu-zhang-1,gu-zhang-2, wu-zhang-siam}, and fourth-order equations \cite{ahmedou-wu-zhang}. Although it has been established in \cite{wei-zhang-plms,wei-zhang-jems} that the first derivatives and the Laplace of the coefficient function vanish at the quantized singular source, it is still highly desirable to prove even higher-order vanishing theorems. The elimination of nonsimple blow-up situations significantly simplifies the profiles of bubbling solutions and is crucial for various subsequent applications. This article aims to demonstrate the vanishing of coefficients for any high order. Our first main result is to prove the vanishing of all second derives of the coefficient function for any $N\ge 1$.

\begin{thm}\label{main-thm-1} Let $\{\mathfrak{u}_k\}$ be a sequence of solutions of (\ref{t-u-k}) such that (\ref{assumption-1}),(\ref{assump-2}) and  (\ref{no-sp-h}) hold. Then along a sub-sequence
$$|D^2(\log \mathrm{H}_k+\phi_k)(0)|=o(1). $$
\end{thm}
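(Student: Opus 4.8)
\noindent\emph{Proof proposal.} The plan is to pass to a normalized problem in which only the trace-free part of the Hessian is undetermined, to import and sharpen the non-simple bubbling profile, and then to kill the remaining coefficient by means of second-order Pohozaev-type identities.

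\emph{Step 1 (normalization and reduction).} I would set $\tilde{\mathrm H}_k=\mathrm H_k e^{\phi_k}$ and $w_k=\mathfrak u_k-\phi_k$ with $\phi_k$ as in (\ref{phi-k}). Since $\phi_k$ is harmonic in $B_\tau$ with $|\phi_k|\le C$ (maximum principle and the boundary-oscillation bound in (\ref{assumption-1})), interior estimates give $\|\tilde{\mathrm H}_k\|_{C^{N+2}(\bar B_{\tau'})}\le C$ and $\tfrac1C\le\tilde{\mathrm H}_k\le C$ for every $\tau'<\tau$; moreover $\Delta w_k+|x|^{2N}\tilde{\mathrm H}_k e^{w_k}=0$ in $B_\tau$ with $w_k$ constant on $\partial B_\tau$, the hypotheses (\ref{assump-2}) and (\ref{no-sp-h}) persist, and $\log\tilde{\mathrm H}_k=\log\mathrm H_k+\phi_k$. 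Thus it suffices to prove $|D^2(\log\tilde{\mathrm H}_k)(0)|=o(1)$ for this normalized data. By Theorems A and B, $\Delta(\log\tilde{\mathrm H}_k)(0)=o(1)$ and $\nabla(\log\tilde{\mathrm H}_k)(0)=o(1)$, so only the trace-free part of $D^2(\log\tilde{\mathrm H}_k)(0)$ is in doubt; writing $z=x_1+ix_2$, this is precisely the coefficient $c_k$ of $\bar z^{\,2}$ in the Taylor expansion of $\log\tilde{\mathrm H}_k$ at $0$, i.e. the angular-frequency-$2$ part of $\log\tilde{\mathrm H}_k$ near the origin, and the whole matter reduces to showing $c_k=o(1)$.

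\emph{Step 2 (sharpened bubbling profile).} I would invoke the non-simple blowup structure underlying Theorems A--B and \cite{kuo-lin-jdg}: violation of the spherical Harnack inequality produces $N+1$ local maxima $p_1^k,\dots,p_{N+1}^k\to0$ of $w_k+2(1+N)\log|x|$ with $|p_i^k|=\delta_k(1+o(1))$ for a single scale $\delta_k\to0$, such that $\delta_k^{-1}(p_1^k,\dots,p_{N+1}^k)$ tends to a regular $(N+1)$-gon, each $p_i^k$ carries a simple Liouville bubble, and $w_k$ rescaled by $\delta_k$ converges to the $\mathbb Z_{N+1}$-symmetric entire solution $U_{\lambda,b}(y)=\log\frac{8(N+1)^2\lambda^2}{(1+\lambda^2|y^{N+1}-b|^2)^2}$ (with $b\neq0$) of $\Delta U+|y|^{2N}e^{U}=0$, whose quantized mass is $8\pi(N+1)$. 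Using the $C^{N+2}$ bound on $\tilde{\mathrm H}_k$ I would push the local expansion of $w_k$ one order beyond what was needed for the first-order theorem: near each bubble, write $w_k$ as the Liouville bubble plus its regular part plus a correction $\psi_k$ solving a linearized Liouville equation whose source is the appropriate Taylor polynomial of $\log(|x|^{2N}\tilde{\mathrm H}_k)$, and invert after projecting off the finite-dimensional kernel of the linearization. This yields a quantitative expansion of $w_k$, of the $N+1$ local masses, and of the configuration itself, all with explicit dependence on $c_k$, together with the a priori relations tying $\delta_k$, the individual bubble scales, and the deviation of the configuration from exact $\mathbb Z_{N+1}$ symmetry.

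\emph{Step 3 (second-order Pohozaev identities).} I would pair $\Delta w_k+|x|^{2N}\tilde{\mathrm H}_k e^{w_k}=0$ with $\xi\cdot\nabla w_k$ for the conformal Killing fields of $\R^2$ generated by $z\partial_z$ and by $z^2\partial_z,\,iz^2\partial_z$ (the latter corrected near $0$ by a lower-order field adapted to the weight $|x|^{2N}$, so that the singular contribution at the origin cancels), and integrate over a fixed small ball. The conformal-Killing property collapses the interior term to $\int\big(\xi\cdot\nabla\log(|x|^{2N}\tilde{\mathrm H}_k)\big)\,|x|^{2N}\tilde{\mathrm H}_k e^{w_k}$. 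Since $|x|^{2N}\tilde{\mathrm H}_k e^{w_k}$ concentrates on the $(N+1)$-gon at scale $\delta_k$ with mass $8\pi(N+1)(1+o(1))$, the contributions of $\nabla\log|x|^{2N}$ and of the zeroth-, first- and Laplacian-order Taylor data of $\log\tilde{\mathrm H}_k$ are either $o(\delta_k^2)$ or cancel at leading order (by Theorems A--B and, for $N\ge2$, by the $(N+1)$-fold symmetry of the limiting measure, which annihilates angular frequencies below $N+1$), whereas the frequency-$2$ part $\Re(c_k\bar z^{\,2})$ of $\log\tilde{\mathrm H}_k$ leaves — through the Step 2 expansion — a term equal to $c_k\delta_k^2$ times a nonzero constant depending only on $N$ and $U_{\lambda,b}$; the $\partial B_r$ boundary integrals are $o(\delta_k^2)$ by the constant boundary data of $w_k$ and interior elliptic estimates. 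Equating the two sides forces $c_k=o(1)$, and combined with Step 1 this gives $|D^2(\log\mathrm H_k+\phi_k)(0)|=o(1)$.

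\emph{Main obstacle.} The crux is the sharpened expansion of Step 2: the $c_k\delta_k^2$ contribution that survives in Step 3 is small, and the mutual interactions among the $N+1$ bubbles and their interaction with the quantized source $|x|^{2N}$ must be resolved precisely enough that it is not drowned by remainders. The difficulty is that the estimates do not decouple — the configuration is only \emph{asymptotically} $\mathbb Z_{N+1}$-symmetric, and its deviation from exact symmetry as well as the ratios of the bubble scales enter at orders comparable to $c_k$ — so one must close a simultaneous bootstrap, and one must first establish at what relative rate $\delta_k$ and the bubble scales tend to zero, from the balancing relations implicit in Theorems A--B and the first-order (translation) Pohozaev identities. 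A further subtlety is the dichotomy in $N$: for $N\ge2$ the leading $c_k$ term in Step 3 is recovered only from the sub-leading, symmetry-breaking structure of the concentrating measure (equivalently, by comparing the $N+1$ local Pohozaev identities), while for $N=1$, where $\bar z^{\,2}$ is already $\mathbb Z_2$-invariant, it appears directly in the dilation identity; the argument has to accommodate both regimes.
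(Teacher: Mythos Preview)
Your Step~1 is exactly the paper's reduction, and your identification in Step~2 and in the ``Main obstacle'' of the need for a sharpened profile with explicit control of the inter-bubble interaction is correct in spirit. The substantive divergence is Step~3.

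The paper's main proof (Sections~2--5) does \emph{not} extract the Hessian via a single second-order Pohozaev identity on a fixed ball. Instead it runs a \emph{first-order} (translation) Pohozaev identity locally around \emph{each} of the $N+1$ maxima $Q_l^k$, obtaining $|\nabla\mathfrak h_k(\delta_kQ_l^k)|=o(\delta_k)$ for every $l$ (Proposition~\ref{final-prop}); differencing these $N+1$ gradient relations then gives $D^2\mathfrak h_k(0)=o(1)$. For this to work one needs $|v_k-V_{l,k}|=O(\delta_k^2)$ and in fact $o(\delta_k^2)$ near $\partial B(Q_l^k,\tau)$, which is exactly the sharpened expansion you describe; the paper achieves it by the rather intricate device of comparing the global approximants $V_{l,k}$ and $V_{s,k}$ at distinct maxima (your ``deviation from exact $\mathbb Z_{N+1}$ symmetry'' is encoded in the quantities $p_s^k-p_l^k$) and showing by an algebraic argument on the kernel coefficients $c_{1,s,l},c_{2,s,l}$ that the normalized error $\tilde w_{l,k}$ is $o(1)$ near every $Q_s^k$, not just near $Q_l^k$ (Lemma~\ref{small-other}).

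Your alternative route --- a $z^2\partial_z$--type identity on one ball --- is in fact treated in the paper, but only as a \emph{second} proof in the special case $N=1$ on the unit disk (Section~7, Proposition~\ref{propo2}). The paper is explicit (Remark~\ref{fails}) that this fails for $N\ge2$: the interior term becomes $\lambda_k\int e^{v_k}\bigl(\partial_1V\,\Re(x^N)-\partial_2V\,\Im(x^N)\bigr)$, and after inserting the Taylor expansion of $\nabla V$ the coefficient of the trace-free Hessian sits in front of $\int|x|^2\Re(x^{N-1})e^{v_k}$, which for $N\ge2$ is \emph{not} determined by the weak limit $8\pi(N+1)\delta_0$ of $|x|^{2N}\tilde H_ke^{w_k}$ --- it lives at a scale the leading profile does not resolve. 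Your proposal to recover it ``from the sub-leading, symmetry-breaking structure (equivalently, by comparing the $N+1$ local Pohozaev identities)'' is precisely the paper's main approach, not a subtlety within the conformal-Killing scheme; once you commit to that, the $z^2\partial_z$ identity is superfluous. So the genuine gap is that for $N\ge2$ your primary mechanism does not close, and the parenthetical alternative you mention \emph{is} the proof, with the hard analysis being the uniform smallness of $v_k-V_{l,k}$ near all $N+1$ bubbles simultaneously.
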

We also have an extension of Theorem \ref{main-thm-1}, which is concerned about vanishing estimates for higher order derivatives. 
Our second main result is 

\begin{thm}\label{main-thm-2} Let $\{\mathfrak{u}_k\}$ be a sequence of solutions of (\ref{t-u-k}) such that (\ref{assumption-1}),(\ref{assump-2}) and  (\ref{no-sp-h}) hold. Then for $N\ge 2^{M+1}$ where $M\in \mathbb N\cup\{0\}$ is a nonnegative integer, we have 
\[|D^{\alpha}(\log \mathrm{H}_k+\phi_k)(0)|=o(1),\quad \forall |\alpha |\le 2^M+1, \]
along a subsequence. 
\end{thm}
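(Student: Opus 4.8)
\medskip

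The plan is to run an induction on $M$, where the base cases $M=0$ (which is essentially Theorem \ref{main-thm-1} together with Theorems A and B) and $M=1$ serve as the anchor, and the inductive step reduces the vanishing of derivatives of order up to $2^{M}+1$ to the vanishing of derivatives of order up to $2^{M-1}+1$ at a doubled value of $N$. The mechanism behind this doubling is the standard Pohozaev-type balancing argument adapted to the singular weight $|x|^{2N}$: after the spherical Harnack inequality is violated, the rescaled solutions develop a finite number of bubbles arranged with a discrete rotational symmetry, and the obstruction to a given derivative of $\log \mathrm{H}_k+\phi_k$ being nonzero is a Pohozaev identity whose leading term is a specific coefficient in the Taylor expansion of $\log \mathrm{H}_k+\phi_k$ paired against a harmonic polynomial determined by the bubble configuration. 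The key point is that for the weight $|x|^{2N}$ the only harmonic polynomials that can appear at the relevant scale are those of degree dividing, or commensurate with, $N+1$; thus for $N$ large relative to $2^{M}$ all Taylor coefficients up to order $2^{M}+1$ are forced into the ``vanishing'' regime one after another.

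\medskip

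Concretely, the first step is to set up the selection of bubbling disks and the local Pohozaev identities exactly as in \cite{wei-zhang-plms}: write $w_k = \mathfrak{u}_k - \phi_k$ so that $w_k$ solves an equation with coefficient $\tilde{\mathrm H}_k = \mathrm{H}_k e^{\phi_k}$ which is smooth up to the boundary with the same $C^{N+2}$ bounds, and observe that $D^\alpha(\log \mathrm{H}_k+\phi_k)(0) = D^\alpha(\log \tilde{\mathrm H}_k)(0)$. Next, using (\ref{no-sp-h}) and (\ref{assump-2}), perform the standard blowup analysis: after rescaling by the maximum, $w_k$ converges (away from the bubble centers) to a solution of a singular Liouville equation on $\mathbb{R}^2$ with $2N$-order singularity at $0$, whose bubbling set consists of $Q$ points on a circle, invariant under rotation by $2\pi/Q$, where $Q \mid (N+1)$ (this is the non-simple structure; see \cite{kuo-lin-jdg}). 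The second step is to write, for each multi-index $\alpha$, the Pohozaev identity obtained by pairing the equation against $\nabla \cdot (P_\alpha \nabla w_k)$-type vector fields where $P_\alpha$ ranges over a basis of harmonic polynomials; the symmetry of the bubble configuration kills all such identities except those whose polynomial degree is a multiple of $Q$, and the surviving identities express $\sum_{|\beta|=d} c_\beta D^\beta(\log\tilde{\mathrm H}_k)(0) = o(1)$ for each $d$ that is a multiple of $Q$ in the relevant range, with $(c_\beta)$ a nondegenerate pairing.

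\medskip

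The third, and genuinely new, step is the bootstrap: knowing that certain combinations of derivatives vanish, one improves the expansion of $w_k$ near the origin, which in turn sharpens the bubble-interaction estimates and allows a *second generation* of Pohozaev identities at the next scale. This is where the factor $2$ enters — each round of improvement roughly halves the admissible value of $N$ needed, or equivalently doubles the order of derivatives controlled, because the error terms in the refined Pohozaev identity are quadratic in the previously-controlled quantities (products of two Taylor coefficients, or a coefficient times a scale factor that is itself $o(1)$ by the previous round). Formally: assuming the theorem for $M-1$, apply it after the rescaling that turns the $|x|^{2N}$ weight into an effective $|x|^{2N'}$ weight with $N' \ge \lfloor N/2 \rfloor \ge 2^{M}$, conclude $D^\gamma = o(1)$ for $|\gamma|\le 2^{M-1}+1$ of the rescaled coefficient, and translate back through the Taylor expansion of the rescaling map — the point being that a derivative of order up to $2^{M}+1$ of $\log\tilde{\mathrm H}_k$ either equals, modulo $o(1)$, a derivative of order $\le 2^{M-1}+1$ of the rescaled function, or is paired in a surviving Pohozaev identity against lower-order data already shown to vanish.

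\medskip

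The main obstacle I expect is controlling the error terms in the iterated Pohozaev identities with enough precision — specifically, showing that the ``cross terms'' involving the boundary oscillation of $w_k$ and the not-yet-controlled high derivatives of $\tilde{\mathrm H}_k$ are genuinely of lower order at each stage, rather than merely bounded. This requires sharp pointwise estimates on $w_k$ and its gradient in the neck regions between bubbles (of the type $|w_k(x) - U_k(x)| \le C$ with explicit decay), uniform in $k$, and a careful accounting of how the constant $C$ in (\ref{assumption-1}) on $\|\mathrm{H}_k\|_{C^{N+2}}$ interacts with the number of iterations $M$ — which is precisely why the hypothesis demands $N \ge 2^{M+1}$ and $C^{N+2}$ regularity rather than something weaker. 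A secondary technical point is verifying that the harmonic-polynomial pairing matrix $(c_\beta)$ is nondegenerate in each surviving degree, which should follow from the explicit form of the standard bubble $U(x) = \log \frac{c}{(1+c|x|^{N+1}/ (N+1)^2)^2}$ (up to scaling) and an orthogonality computation on the circle.
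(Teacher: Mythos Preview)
Your proposal has the right overall shape --- an iterative bootstrap in which each round roughly doubles the order of vanishing --- but the concrete mechanism you describe in the third step is not what actually happens, and as stated it would not work.

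There is no rescaling that turns the weight $|x|^{2N}$ into $|x|^{2N'}$ with $N'\approx N/2$; the integer $N$ is fixed throughout the argument. What doubles is not $N$ but the exponent $N_1$ in the estimate $|\nabla\mathfrak h_k(\delta_kQ_l^k)|=o(\delta_k^{N_1})$ obtained from the Pohozaev identity at each of the $N+1$ local maxima $Q_l^k$. The step you are missing is a purely algebraic interpolation lemma (Lemma~\ref{lemma-wei} in the paper): the $Q_l^k$ are, after scaling, essentially the $(N{+}1)$-th roots of unity, so the $N{+}1$ first--order estimates $|\nabla\mathfrak h_k(\delta_k e^{i\beta_l})|=o(\delta_k^{N_1})$ amount to evaluating a degree--$N_1$ Taylor polynomial (written in $z,\bar z$) at $N{+}1$ equispaced points on a circle. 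A Vandermonde argument --- which is exactly where the condition $N\ge 2N_1$ enters, since one needs $z_0^{-N_1},\dots,z_0^{N_1}$ to be distinct for $z_0=e^{2\pi i/(N+1)}$ --- then yields $|\nabla^{\alpha}\mathfrak h_k(0)|=o(\delta_k^{N_1+1-|\alpha|})$ for all $1\le|\alpha|\le N_1+1$. Feeding this back into the right-hand side of the $w_k$ equation improves the error to order $\delta_k^{2N_1+1}$, and a new round of Pohozaev at each $Q_l^k$ gives $|\nabla\mathfrak h_k(\delta_kQ_l^k)|=o(\delta_k^{2N_1})$. Thus $N_1$ doubles, and after $M$ rounds one reaches $N_1=2^M$, requiring $N\ge 2^{M+1}$.

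So your ``harmonic-polynomial pairing matrix'' intuition is pointing in roughly the right direction, but the nondegeneracy that is actually needed is of a Vandermonde matrix built from the $(N{+}1)$-th roots of unity, not of a pairing against harmonics of the bubble; and the Pohozaev identities used are only the standard first-order ones at each $Q_l^k$, not higher-order identities paired against a basis of harmonic polynomials. Two smaller corrections: in the non-simple blowup there are exactly $N{+}1$ bubbles (not a proper divisor $Q$ of $N{+}1$), and the quadratic-in-previous-quantities heuristic for the error is not the source of the doubling --- the doubling comes from the Taylor-expansion improvement just described.
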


\begin{rem}
Theorem \ref{main-thm-1} holds for all $N\ge 1$. Theorem \ref{main-thm-2} proves that any order of derivatives of the coefficient function vanishes as long as $N$ is large.   Both theorems improve the previous Laplacian vanishing Theorem in \cite{wei-zhang-jems}. For some applications such as the study of the blow-up profile of singular Liouville equations with moving poles, the Laplacian vanishing theorem in \cite{wei-zhang-jems} cannot be applied. It is crucial to have Theorem \ref{main-thm-1} and Theorem \ref{main-thm-2}. 
\end{rem}

{\bf Notation:} We will use $B(x_0,r)$ to denote a ball centered at $x_0$ with radius $r$. If $x_0$ is the origin, we use $B_r$. $C$ represents a positive constant that can change from place to place.

The proof of the main theorem requires delicate point-wise estimates of the blow-up solutions around each local maximum point. This was the case in the proofs of the previous work of the second and third authors in \cite{wei-zhang-adv,wei-zhang-plms,wei-zhang-jems}. The difference in this work is that we need more refined estimates, which require a subtler analysis. In general, the nature of the proof demands estimates of different levels of precision in a progressive manner. One level of estimate leads to a higher-order estimate later. Identifying the error threshold in each stage is crucial for the completion of the whole proof eventually. If we describe the entire scheme of the proof from a general viewpoint, we use Fourier analysis for the pointwise estimate around each local maximum, Harnack inequality to pass certain smallness information to regions away from local maximums. Moreover we use Pohozaev identities to capture crucial vanishing information on coefficient functions. The approach we employ in this work should be useful for other related situations as well. 

The organization of this article is the following. Sections 2-5 contain the complete proof of Theorem \ref{main-thm-1}. Section six is saved for the proof of Theorem \ref{main-thm-2}. Finally in section seven we provide a different perspective by discussing the Dirichlet problem and giving a second proof of the Hessian vanishing estimate for $N=1$ using new Pohozaev identities.

\section{Preliminary discussions for blowup analysis and the locally defined equation}
In the first stage of the proof of the main Theorem \ref{main-thm-1} we set up some notations and cite some preliminary results.
Let
\begin{equation}\label{uk-d}
u_k(x)=\mathfrak{u}_k(x)-\phi_k(x), \quad \mbox{where $\phi_k$ is defined in (\ref{phi-k}) and}
\end{equation}
\begin{equation}\label{hk-d}
h_k(x)=\mathrm{H}_k(x)e^{\phi_k(x)}.
\end{equation}
Now we write the equation of $u_k$ as
\begin{equation}\label{eq-uk}
\Delta u_k(x)+|x|^{2N}h_k(x)e^{u_k}=0,\quad \mbox{ in }\quad B_{\tau}
\end{equation}
Without loss of generality we assume
\begin{equation}\label{rea-h}
\lim_{k\to \infty} h_k(0)=1.
\end{equation}

Obviously (\ref{no-sp-h}) is equivalent to
\begin{equation}\label{no-sp-h-u}
\max_{x\in B_{\tau}} u_k(x)+2(1+N)\log |x|\to \infty,
\end{equation}

It is well known \cite{kuo-lin-jdg, bart3} that $ u_k$ exhibits a non-simple blow-up profile.  It is established in \cite{kuo-lin-jdg,bart3} that there are $N+1$ local maximum points of $ u_k$: $p_0^k$,....,$p_{N}^k$ and they are evenly distributed on $\mathbb S^1$ after scaling according to their magnitude: Suppose along a subsequence
$$\lim_{k\to \infty}p_0^k/|p_0^k|=e^{i\theta_0}, $$
then
$$\lim_{k\to \infty} \frac{p_l^k}{|p_0^k|}=e^{i(\theta_0+\frac{2\pi l}{N+1})}, \quad l=1,...,N. $$
For many reasons it is convenient to denote $|p_0^k|$ as $\delta_k$ and define $\mu_k$ as follows:
\begin{equation}\label{muk-dk}
\delta_k=|p_0^k|\quad \mbox{and }\quad \mu_k= u_k(p_0^k)+2(1+N)\log \delta_k.
\end{equation}
Also we use 
\begin{equation}\label{ep-k}
\epsilon_k=e^{-\frac 12 \mu_k} 
\end{equation}
to be the scaling factor most of the time. 
Since $p_l^k$'s are evenly distributed
around $\partial B_{\delta_k}$, standard results for Liouville equations around a regular blow-up point can be applied to have $ u_k(p_l^k)= u_k(p_0^k)+o(1)$. Also, (\ref{no-sp-h}) gives $\mu_k\to \infty$. The interested readers may look into \cite{kuo-lin-jdg,bart3} for more detailed information.

\section{Approximating bubbling solutions by global solutions}

We write $p_0^k$ as $p_0^k=\delta_ke^{i\theta_k}$ and define $v_k$ as
\begin{equation}\label{v-k-d}
v_k(y)=u_k(\delta_k ye^{i\theta_k})+2(N+1)\log \delta_k,\quad |y|<\tau \delta_k^{-1}.
\end{equation}
If we write out each component, (\ref{v-k-d}) is
$$
v_k(y_1,y_2)=u_k(\delta_k(y_1\cos\theta_k-y_2\sin\theta_k),\delta_k(y_1\sin\theta_k+y_2\cos\theta_k))+2(1+N)\log \delta_k. $$
Then it is standard to verify that $v_k$ solves

\begin{equation}\label{e-f-vk}
\Delta v_k(y)+|y|^{2N}\mathfrak{h}_k(\delta_k y)e^{v_k(y)}=0,\quad |y|<\tau/\delta_k,
\end{equation}
where
\begin{equation}\label{frak-h}
\mathfrak{h}_k(x)=h_k(xe^{i\theta_k}),\quad |x|<\tau.
\end{equation}
Thus the image of $p_0^k$ after scaling is $Q_1^k=e_1=(1,0)$.
Let $Q_1^k$, $Q_2^k$,...,$Q_{N}^k$ be the images of $p_i^k$ $(i=1,...,N)$ after the scaling:
$$Q_l^k=\frac{p_l^k}{\delta_k}e^{-i\theta_k},\quad l=1,...,N. $$
 It is established by Kuo-Lin in \cite{kuo-lin-jdg} and independently by Bartolucci-Tarantello in \cite{bart3} that
\begin{equation}\label{limit-q}
\lim_{k\to \infty} Q_l^k=\lim_{k\to \infty}p_l^k/\delta_k=e^{\frac{2l\pi i}{N+1}},\quad l=0,....,N.
\end{equation}
Then it is proved in \cite{wei-zhang-adv} that ( see (3.13) in \cite{wei-zhang-adv})
\begin{equation}\label{distance}
Q_l^k-e^{\frac{2\pi l i}{N+1}}=O(\mu_ke^{-\mu_k})+O(|\nabla \log \mathfrak{h}_k(0)|\delta_k). 
\end{equation}
Using the rate of $\nabla \mathfrak{h}_k(0)$ in \cite{wei-zhang-adv} we have
\begin{equation}\label{Qm-close}
Q_l^k-e^{\frac{2\pi l i}{N+1}}=O(\mu_ke^{-\mu_k})+O(\delta_k^2).
\end{equation}
Choosing $\epsilon>0$ small and independent of $k$, we can make disks centered at $Q_l^k$ with radius $3\epsilon$ (denoted as $B(Q_l^k,3\epsilon ) $) mutually disjoint. Let
\begin{equation}\label{v-muk}
\mu_k=\max_{B(Q_0^k,\epsilon)} v_k.
\end{equation}
Since $Q_l^k$ are evenly distributed around $\partial B_1$, it is easy to use standard estimates for single Liouville equations (\cite{zhangcmp,gluck,chenlin1}) to obtain
$$\max_{B(Q_l^k,\epsilon)}v_k=\mu_k+o(1),\quad l=1,...,N. $$

Let
\begin{equation}\label{def-Vk}
V_k(x)=\log \frac{e^{\mu_k}}{(1+\frac{e^{\mu_k}\mathfrak{h}_k(\delta_k e_1)}{8(1+N)^2}|y^{N+1}-e_1|^2)^2}.
\end{equation}
Clearly $V_k$ is a solution of
\begin{equation}\label{eq-for-Vk}
\Delta V_k+\mathfrak{h}_k(\delta_k e_1)|y|^{2N}e^{V_k}=0,\quad \mbox{in}\quad \mathbb R^2, \quad V_k(e_1)=\mu_k.
\end{equation}
This expression is based on the classification theorem of Prajapat-Tarantello \cite{prajapat}.
For convenience we use
$$\beta_l=\frac{2\pi l}{N+1}, \quad \mbox{so}\,\, e_1=e^{i\beta_0}=Q_0^k,\quad
e^{i\beta_l}=Q_l^k+E,\,\,\mbox{ for }\,\, l=1,...,N. $$
The following expansion of $V_k$ on $|y|=\tau\delta_k^{-1}$ shows that the oscillation of $V_k$ is $O(\delta_k^{N+1})$ on $\partial B(0,\tau\delta_k^{-1})$:
\begin{equation}\label{osi-global}
V_k(x)=-\mu_k+2\log\frac{8(1+N)^2}{\mathfrak{h}_k(\delta_ke_1)}-4\log L_k+4L_k^{-N-1}\cos ((N+1)\theta)+O(\delta_k^{2N+2})
\end{equation}
where $L_k=\tau\delta_k^{-1}$.
\section{Vanishing of the first derivatives}
Our first goal is to prove the following vanishing rate for $\nabla \mathfrak{h}_k(0)$:
\begin{thm}\label{vanish-first-h}
\begin{equation}\label{vanish-first-tau}
\nabla (\log \mathfrak{h}_k)(0)=O(\delta_k)
\end{equation}
\end{thm}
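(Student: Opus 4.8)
The plan is to establish the sharp rate $\nabla(\log\mathfrak h_k)(0)=O(\delta_k)$ by combining a Pohozaev-type identity centered at the origin with the pointwise approximation $v_k\approx V_k$ coming from the previous section. First I would record the expected heuristic: Theorem~B (with $\phi_k$ already subtracted off, so that $\mathfrak h_k$ plays the role of $\mathrm H_ke^{\phi_k}$ after rotation) tells us only that $\nabla(\log\mathfrak h_k)(0)=o(1)$; the goal here is to upgrade $o(1)$ to $O(\delta_k)$, which is exactly the rate that appears already in \eqref{distance}--\eqref{Qm-close} for the positions of the local maxima. So the mechanism must be that the $N+1$ bubbles, being evenly distributed on $\partial B_1$ at scale $\delta_k$, produce cancellations in the relevant moment of the nonlinearity, and the only surviving term is the linear term $\nabla\log\mathfrak h_k(0)\cdot(\delta_k y)$, which therefore must itself be controlled by the error level $O(\delta_k)$ (times a quantity that does not degenerate).

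The key steps, in order, would be: (1) Write the Pohozaev identity for $v_k$ on a ball $B_{r\delta_k^{-1}}$ (equivalently for $u_k$ on $B_r$) of fixed radius $r$, in the two independent directions $e_1,e_2$; the bulk term is $\int |y|^{2N}\big(\nabla_x\mathfrak h_k\big)(\delta_k y)\,\delta_k\,e^{v_k}\,dy$ (the derivative of the coefficient), and the boundary terms involve $v_k$ and its gradient on $|y|=r\delta_k^{-1}$. (2) Use the expansion \eqref{osi-global} of the global solution $V_k$ together with the approximation $v_k=V_k+$(small) on the scale of order one away from the bubbling disks — i.e. the pointwise estimates established in \cite{wei-zhang-adv,wei-zhang-plms} and refined here — to show the boundary terms are $O(\delta_k^{2})$ or smaller, since the oscillation of $V_k$ on the relevant circle is $O(\delta_k^{N+1})$ and the Harnack inequality propagates smallness inward. (3) Expand $\mathfrak h_k$ around the origin: $\nabla_x\mathfrak h_k(\delta_k y)=\nabla\mathfrak h_k(0)+O(\delta_k|y|)$, so the bulk term becomes $\delta_k\,\nabla\mathfrak h_k(0)\cdot\int |y|^{2N}y\,e^{v_k}\,dy$ plus a genuinely negligible remainder. (4) Evaluate the vector moment $\int |y|^{2N}y\,e^{v_k}\,dy$: replacing $e^{v_k}$ by $e^{V_k}$ and $V_k$ by its explicit symmetric form built from $|y^{N+1}-e_1|^2$, the integral $\int |y|^{2N} y\, e^{V_k}\,dy$ vanishes to leading order by the $(N+1)$-fold rotational symmetry of $e^{V_k}|y|^{2N}$ (the mass sits near the $N+1$ symmetric points $e^{i\beta_l}$, whose vectors sum to zero), so one must go to the next order and track the non-degenerate coefficient. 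Putting the pieces together forces $\delta_k|\nabla\mathfrak h_k(0)|\cdot(\text{non-degenerate constant})=O(\delta_k^{2})+o(\delta_k|\nabla\mathfrak h_k(0)|)$, which yields \eqref{vanish-first-tau}.

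The main obstacle, I expect, is step (4) together with the precision bookkeeping in step (2): one cannot simply use the crude symmetry cancellation, because it makes the leading moment vanish, so the conclusion rests on identifying the \emph{next} term in the moment expansion and showing its coefficient is bounded below (does not degenerate along the subsequence) — this requires the refined pointwise control of $v_k - V_k$ near each $Q_l^k$ (not merely $o(1)$ but with an explicit $\delta_k$-rate), as well as using the already-known rate $Q_l^k - e^{i\beta_l}=O(\mu_ke^{-\mu_k})+O(\delta_k^2)$ from \eqref{Qm-close} to see that the perturbation of the bubble locations contributes only at the negligible level. A secondary technical point is handling the interaction/error terms where the $N+1$ bubbles overlap the annular region, and making sure the $\phi_k$ harmonic correction (already absorbed into $\mathfrak h_k$ via \eqref{hk-d} and the rotation \eqref{frak-h}) does not reintroduce a first-order term — but since $\phi_k$ is harmonic and $\nabla(\log\mathrm H_k+\phi_k)(0)=o(1)$ is Theorem~B, this is consistent and the Pohozaev computation is carried out directly for the rotated, corrected coefficient $\mathfrak h_k$.
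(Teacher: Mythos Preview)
Your plan has a genuine gap: it treats the refined pointwise control of $w_k=v_k-V_k$ as an input, when in fact obtaining that control \emph{is} the proof. The estimates available from \cite{wei-zhang-adv,wei-zhang-plms} (e.g.\ the bound quoted as Proposition~3.1 of \cite{wei-zhang-plms}) give only $|w_k|=O(\mu_k e^{-\mu_k/2})$ near a bubble and $O(\mu_k e^{-\mu_k}/|x-e^{i\beta_l}|)+O(\mu_k^2e^{-\mu_k})$ away from it; plugging these into your Pohozaev identity at the origin recovers at best $\delta_k|\nabla\mathfrak h_k(0)|=O(\mu_ke^{-\mu_k})+O(\delta_k^2)$, i.e.\ exactly the previously known $|\nabla\mathfrak h_k(0)|=O(\delta_k^{-1}\mu_ke^{-\mu_k})+O(\delta_k)$. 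In the regime $\delta_k=o(\epsilon_k\mu_k^{1/2})$ (which is precisely the case left open) one has $\mu_ke^{-\mu_k}\gg\delta_k^2$, so no improvement results. Your own step~(4) flags this --- ``requires the refined pointwise control \dots\ not merely $o(1)$ but with an explicit $\delta_k$-rate'' --- but you never say how to get that rate; the phrase ``refined here'' is circular.

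The paper's actual argument reverses the logic. It first proves, by contradiction and a lengthy linearized analysis (Proposition~\ref{key-w8-8} via Lemmas~\ref{w-around-e1}--\ref{small-other}), the sharp bound $|w_k|\le C\tilde\delta_k$ with $\tilde\delta_k=\delta_k|\nabla\mathfrak h_k(0)|+\delta_k^2$; the hardest part (Lemma~\ref{small-other}) compares $v_k$ to $N+1$ \emph{different} global solutions $V_{l,k}$ and uses a combinatorial argument on the perturbed roots $p_l^k$ to rule out any nontrivial limit. Only after this does a Pohozaev identity enter, and it is applied \emph{locally around a single bubble} $B(e_1,r)$, not around the origin: there one compares the identities for $v_k$ and $V_k$ directly, the difference is $o(\tilde\delta_k)$ by the bound just proved, and the surviving term is $8\pi\delta_k\partial_\xi(\log\mathfrak h_k)(\delta_ke_1)$ with no symmetry cancellation to unwind. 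Your origin-centred identity introduces an extra term $2N\mathfrak h_k(0)\int|y|^{2N-2}y_\xi e^{v_k}$ whose leading part vanishes by $(N{+}1)$-fold symmetry of $V_k$, forcing you to extract information from $\int|y|^{2N-2}y_\xi e^{V_k}w_k$ --- which again needs the $\tilde\delta_k$-bound on $w_k$ that you have not established.
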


\noindent{\bf Proof of Theorem \ref{vanish-first-h}:}

Note that we have proved in \cite{wei-zhang-adv} that 
$$\nabla (\log \mathfrak{h}_k)(0)=O(\delta_k^{-1}\mu_ke^{-\mu_k})+O(\delta_k).$$
From (\ref{ep-k}) we see that if $\delta_k\ge C\epsilon_k\mu_k^{\frac 12}$, there is nothing to prove. So we assume that 
\begin{equation}\label{delta-small-1}
\delta_k=o(\epsilon_k\mu_k^{\frac 12}).
\end{equation}
By way of contradiction we assume that there exists $C>0$ independent of $k$, such that 
\begin{equation}\label{assum-delta-h}
|\nabla \mathfrak{h}_k(0)|/\delta_k \to \infty. 
\end{equation}

Another observation is that based on (\ref{Qm-close}) we have 
\begin{equation}\label{location-Q}
\epsilon_k^{-1} | Q_l^k-e^{i\beta_l}|\to 0,\quad l=0,...,N.
\end{equation}
Thus $\xi_k$ tends to $U$ after scaling. 

Under assumption (\ref{delta-small-1}) we cite Proposition 3.1 of \cite{wei-zhang-plms}: 

\emph{Proposition 3.1 of \cite{wei-zhang-plms}: Let $l=0,...,N$ and $\tau_1$ be small so that $B(e^{i\beta_l},\delta)\cap B(e^{i\beta_s},\delta)=\emptyset$ for $l\neq s$.
In each $B(e^{i\beta_l},\delta)$
\begin{equation}\label{global-close}
|v_k(x)-V_k(x)|\le \left\{\begin{array}{ll}
C\mu_ke^{-\mu_k/2},\quad |x-e^{i\beta_l}|\le Ce^{-\mu_k/2}, \\
\\
C\frac{\mu_ke^{-\mu_k}}{|x-e^{i\beta_l}|}+O(\mu_k^2e^{-\mu_k}),\quad Ce^{-\mu_k/2}\le |x-e^{i\beta_l}|\le \tau_1.
\end{array}
\right.
\end{equation} }


One major step in the proof of Theorem \ref{vanish-first-h} is the following estimate:

\begin{prop}\label{key-w8-8} Let $w_k=v_k-V_k$, then
$$|w_k(y)|\le C\tilde{\delta_k}, \quad y\in \Omega_k:=B(0,\tau \delta_k^{-1}), $$
where $\tilde{\delta_k}=|\nabla \mathfrak{h}_k(0)|\delta_k+\delta_k^2$.
\end{prop}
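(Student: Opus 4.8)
The plan is to treat $w_k = v_k - V_k$ as solving a linear elliptic equation with a right-hand side whose size is controlled by $\tilde\delta_k = |\nabla\mathfrak h_k(0)|\delta_k + \delta_k^2$, and then run a maximum-principle / blowup argument on the quotient $w_k / \tilde\delta_k$. First I would write the equation for $w_k$ by subtracting \eqref{eq-for-Vk} from \eqref{e-f-vk}: on $\Omega_k = B(0,\tau\delta_k^{-1})$ we get $\Delta w_k + |y|^{2N}\mathfrak h_k(\delta_k e_1) e^{V_k}\frac{e^{w_k}-1}{w_k}w_k = -|y|^{2N}\big(\mathfrak h_k(\delta_k y) - \mathfrak h_k(\delta_k e_1)\big)e^{v_k}$. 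The coefficient in front of $w_k$ is, up to the bounded factor $(e^{w_k}-1)/w_k$, the linearized potential $|y|^{2N}\mathfrak h_k(\delta_k e_1)e^{V_k}$, which is integrable and decays like $|y|^{-2N-4}$ at infinity; the forcing term is controlled using the Taylor expansion $\mathfrak h_k(\delta_k y) - \mathfrak h_k(\delta_k e_1) = \nabla\mathfrak h_k(0)\cdot\delta_k(y-e_1) + O(\delta_k^2|y|^2)$ together with the fact that $|y|^{2N}e^{v_k}$ is essentially concentrated near the points $e^{i\beta_l}$ (by Proposition 3.1 of \cite{wei-zhang-plms}, $v_k$ is close to $V_k$, hence $|y|^{2N}e^{v_k} \lesssim |y|^{2N}e^{V_k}$ away from a bounded region, with integrable tails). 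This shows the $L^1$-type norm of the right-hand side, suitably weighted, is $O(\tilde\delta_k)$.

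Next I would set up the boundary control. On $\partial\Omega_k = \{|y| = \tau\delta_k^{-1}\}$, the oscillation estimate \eqref{osi-global} for $V_k$ together with the definition \eqref{v-k-d} and the harmonic-extension normalization built into $\phi_k$ shows $v_k - V_k$ on the boundary is $O(\delta_k^{N+1})$ (plus a constant that can be absorbed), which is $o(\tilde\delta_k)$ once we use \eqref{delta-small-1} and $N\ge 1$. So the boundary contribution is negligible compared to $\tilde\delta_k$. Then the estimate $|w_k| \le C\tilde\delta_k$ on all of $\Omega_k$ would follow from a Green's representation: $w_k(y) = (\text{harmonic part from }\partial\Omega_k) + \int_{\Omega_k} G_k(y,z)\big[\text{potential}\cdot w_k + \text{forcing}\big]dz$, estimating the potential term by a bootstrap/contraction once we know $w_k$ is small in a weaker norm (which Proposition 3.1 of \cite{wei-zhang-plms} already gives, namely $|w_k| \le C\mu_k e^{-\mu_k/2}$ near the blowup points and $C\mu_k e^{-\mu_k}/|y-e^{i\beta_l}| + O(\mu_k^2 e^{-\mu_k})$ on the annular region).

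The cleanest route, and the one I would actually carry out, is by contradiction: suppose $\sup_{\Omega_k}|w_k|/\tilde\delta_k \to \infty$, set $\Lambda_k = \sup_{\Omega_k}|w_k|$ and $\tilde w_k = w_k/\Lambda_k$. Then $\|\tilde w_k\|_{L^\infty} = 1$, $\tilde w_k$ satisfies the linearized equation with forcing $o(1)$ (since forcing is $O(\tilde\delta_k) = o(\Lambda_k)$) and negligible boundary data, so on any fixed compact set $\tilde w_k$ converges to a bounded solution of the linearized Liouville equation $\Delta\tilde w + |y|^{2N}e^{U}\tilde w = 0$ around the global profile, where the kernel is spanned by the known functions coming from translations/dilations of the Prajapat–Tarantello solution. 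A separation-of-variables (Fourier mode in $\theta$) analysis — exactly the kind of argument used in \cite{wei-zhang-adv, wei-zhang-plms} — shows this limiting kernel element must be a specific combination, and matching it against the orthogonality conditions forced by the location equations \eqref{Qm-close} and by the choice of $p_0^k$ as a local maximum (so the relevant first Fourier coefficients of $w_k$ vanish at $e_1$) forces the limit to be $0$, contradicting that the sup is attained (one must check the sup is attained in a region that survives the limit, using the decay of $\tilde w_k$ at the scale $\delta_k^{-1}$ coming from the boundary estimate).

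The main obstacle I expect is the last point: showing that the normalized sequence $\tilde w_k$ cannot concentrate its sup near $\partial\Omega_k$ (the scale $|y|\sim\delta_k^{-1}$) where the limiting equation degenerates, and correctly identifying which Fourier modes are killed by the maximum-point condition versus which are killed by the $O(\delta_k^2)$ refinement of \eqref{Qm-close}. This requires a two-scale argument: an interior estimate near the bubbles giving $|w_k| \lesssim \tilde\delta_k$ there, propagated outward by a Harnack-type inequality and the explicit $O(\delta_k^{N+1})$ boundary oscillation, so that the global sup is in fact controlled by the interior one plus the boundary one. Getting the bookkeeping of these error thresholds right — so that the forcing $\tilde\delta_k$ genuinely dominates all competing errors ($\mu_k e^{-\mu_k}$, $\delta_k^{N+1}$, etc.) under assumption \eqref{delta-small-1} — is the delicate part, but it is the same mechanism already present in the earlier papers, now pushed to one higher order of precision.
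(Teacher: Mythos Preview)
Your contradiction setup, the linearized equation for $\tilde w_k$, the boundary control via \eqref{osi-global}, and the use of $w_k(e_1)=|\nabla w_k(e_1)|=0$ to kill the kernel element near $e_1$ are all correct and match the paper's argument (Lemmas~\ref{w-around-e1}--\ref{t-w-1-better}). But there is a genuine gap at the other $N$ blowup points $e^{i\beta_l}$, $l=1,\dots,N$, and this is where the bulk of the paper's work lies.

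At those points the constraints you invoke are not available: the local maximum of $v_k$ is at $Q_l^k$, while the local maximum of $V_k$ is at $e^{i\beta_l}$, and these differ by $O(\mu_k e^{-\mu_k})+O(\delta_k^2)$. Hence $w_k(e^{i\beta_l})$ and $\nabla w_k(e^{i\beta_l})$ do \emph{not} vanish, so after rescaling near $e^{i\beta_l}$ the limit of $\tilde w_k$ is a genuine nonzero element $c_{0}\phi_0+c_{1}\phi_1+c_{2}\phi_2$ of the kernel, and neither ``the location equations \eqref{Qm-close}'' nor ``first Fourier coefficients at $e_1$'' say anything about $c_1,c_2$ there. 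The paper's mechanism (Lemma~\ref{small-other}, Steps~2--5) is substantially more elaborate than what you sketch: one introduces a \emph{different} global approximation $V_{l,k}$ at each $Q_l^k$, computes the pairwise differences $V_{s,k}-V_{l,k}$ explicitly (this is the long expansion \eqref{late-1}--\eqref{exp-a-squ}), reads off $c_{1,s,l},c_{2,s,l}$ in terms of the position errors $(p_s^k-p_l^k)/(\epsilon_k M_k)$ via \eqref{c-12}, and then uses the \emph{full set} of $N+1$ gradient constraints $\nabla \tilde w_{l,k}(Q_l^k)=0$ (one for each choice of base point $l$) to derive the algebraic system \eqref{eq-ar-1}--\eqref{eq-ar-2}. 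That system is then shown to force all coefficients to vanish by a case analysis on how many $a_s$ can be nonzero, with separate treatment of $N=1$, $N=2$, and $N\ge 3$. None of this is present in your proposal, and it is not covered by the arguments in \cite{wei-zhang-adv,wei-zhang-plms} --- indeed, pushing past the threshold those papers reach is precisely the new content here. Your stated ``main obstacle'' (sup escaping to $\partial\Omega_k$) is in fact a minor point, handled by $\delta_k^{N+1}=o(\tilde\delta_k)$; the real obstacle is the one you did not flag.
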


\noindent{\bf Proof of Proposition \ref{key-w8-8}:}

Obviously based on (\ref{assum-delta-h}) we have $|\nabla \mathfrak{h}_k(0)|\delta_k/\delta_k^2\to \infty$. 
Now we recall the equation for $v_k$ is (\ref{e-f-vk}),
$v_k$ is a constant on $\partial B(0,\tau \delta_k^{-1})$. Moreover $v_k(e_1)=\mu_k$. Recall that $V_k$ defined in (\ref{def-Vk}) satisfies
$$\Delta V_k+\mathfrak{h}_k(\delta_ke_1)|y|^{2N}e^{V_k}=0,\quad \mbox{in}\quad \mathbb R^2, \quad \int_{\mathbb R^2}|y|^{2N}e^{V_k}<\infty, $$
$V_k$ has its local maximums at $e^{i\beta_l}$ for $l=0,...,N$ and $V_k(e_1)=\mu_k$.
For $|y|\sim \delta_k^{-1}$, the oscillation of $V_k$ is given by (\ref{osi-global}).

Let $\Omega_k=B(0,\tau_1 \delta_k^{-1})$,  we shall derive a precise, point-wise estimate of $w_k$ in $B_3\setminus \cup_{l=1}^{N}B(Q_l^k,\tau_1)$ where $\tau_1>0$ is a small number independent of $k$. Here we note that among $N+1$ local maximum points, we already have $e_1$ as a common local maximum point for both $v_k$ and $V_k$ and we shall prove that $w_k$ is very small in $B_3$ if we exclude all bubbling disks except the one around $e_1$. Before we carry out more specific computation we emphasize the importance of
\begin{equation}\label{control-e}
w_k(e_1)=|\nabla w_k(e_1)|=0.
\end{equation}
Now we write the equation of $w_k$ as
\begin{equation}\label{eq-wk}
\Delta w_k+\mathfrak{h}_k(\delta_k y)|y|^{2N}e^{\xi_k}w_k=(\mathfrak{h}_k(\delta_k e_1)-\mathfrak{h}_k(\delta_k y))|y|^{2N}e^{V_k}
\end{equation}
 in $\Omega_k$, where $\xi_k$ is obtained from the mean value theorem:
$$
e^{\xi_k(x)}=\left\{\begin{array}{ll}
\frac{e^{v_k(x)}-e^{V_k(x)}}{v_k(x)-V_k(x)},\quad \mbox{if}\quad v_k(x)\neq V_k(x),\\
\\
e^{V_k(x)},\quad \mbox{if}\quad v_k(x)=V_k(x).
\end{array}
\right.
$$
An equivalent form is
\begin{equation}\label{xi-k}
e^{\xi_k(x)}=\int_0^1\frac d{dt}e^{tv_k(x)+(1-t)V_k(x)}dt=e^{V_k(x)}\big (1+\frac 12w_k(x)+O(w_k(x)^2)\big ).
\end{equation}
For convenience we write the equation for $w_k$ as
\begin{equation}\label{eq-wk-2}
\Delta w_k+\mathfrak{h}_k(\delta_k y)|y|^{2N}e^{\xi_k}w_k=\delta_k\nabla \mathfrak{h}_k(\delta_k e_1)\cdot (e_1-y)|y|^{2N}e^{V_k}+E_1
\end{equation}
where $$E_1=O(\delta_k^2)|y-e_1|^2|y|^{2N}e^{V_k},\quad y\in \Omega_k. $$
Note that the oscillation of $w_k$ on $\partial \Omega_k$ is $O(\delta_k^{N+1})$, which all comes from the oscillation of $V_k$. 

Let $M_k=\max_{x\in \bar \Omega_k}|w_k(x)|$. We shall get a contradiction by assuming $M_k/\tilde{\delta_k}\to \infty$. This assumption implies
\begin{equation}\label{big-mk}
M_k/\delta_k^2\to \infty. 
\end{equation}
Set
$$\tilde w_k(y)=w_k(y)/M_k,\quad x\in \Omega_k. $$
Clearly $\max_{x\in \Omega_k}|\tilde w_k(x)|=1$. The equation for $\tilde w_k$ is
\begin{equation}\label{t-wk}
\Delta \tilde w_k(y)+|y|^{2N}\mathfrak{h}_k(\delta_k e_1)e^{\xi_k}\tilde w_k(y)=a_k\cdot (e_1-y)|y|^{2N}e^{V_k}+\tilde E_1,
\end{equation}
in $\Omega_k$,
where $a_k=\delta_k\nabla \mathfrak{h}_k(0)/M_k\to 0$,
\begin{equation}\label{t-ek}
\tilde E_1=o(1)|y-e_1|^2|y|^{2N}e^{V_k},\quad y\in \Omega_k.
\end{equation}
Also on the boundary, since $M_k/\delta_k^2\to \infty$, we have 
\begin{equation}\label{w-bry}
\tilde w_k=C+o(1),\quad \mbox{on}\quad \partial \Omega_k. 
\end{equation}
Let
\begin{equation}\label{w-ar-e1}
W_k(z)=\tilde w_k(e_1+\epsilon_kz)
\end{equation}
and we recall that $\epsilon_k=e^{-\frac 12 \mu_k}$.
then if we use $W$ to denote the limit of $W_k$, we have
$$\Delta W+e^UW=0, \quad \mathbb R^2, \quad |W|\le 1, $$
and $U$ is a solution of $\Delta U+e^U=0$ in $\mathbb R^2$ with $\int_{\mathbb R^2}e^U<\infty$. Since $0$ is the local maximum of $U$,
we know from the classification theorem of Caffarelli-Gidas-Spruck \cite{CGS} that 
$$U(z)=\log \frac{1}{(1+\frac 18|z|^2)^2}. $$
If we use $\bar\xi_{k,0}(|z|)$ to be the radial part of 
\[\log (|e_1+\epsilon_k z|^{2N}\mathfrak{h}_k(\delta_ke_1))+\xi_k(e_1+\epsilon_kz)+2\log \epsilon_k\] 
with respect to $e_1$, which satisfies $\bar \xi_{k,0}\to U$ in $C^2_{loc}(\mathbb R^2)$ and 
\[e^{\bar \xi_{k,0}(z)}\le C(1+|z|)^{-4},\quad |z|\le \tau_k \epsilon_k^{-1}. \]
We write the equation of $W_k$ as
\begin{equation}\label{e-Wk}
\Delta W_k(z)+e^{\bar \xi_{k,0}}W_k=E_2^k,\quad |z|\le \tau_1\epsilon_k^{-1}
\end{equation}
where 
\[E_2^k(z)=O(\epsilon_k)(1+|z|)^{-3}.\]
   In the following we shall put the proof of Proposition \ref{key-w8-8} into a few estimates. In the first estimate we prove

\begin{lem}\label{w-around-e1} For $\delta>0$ small and independent of $k$,
\begin{equation}\label{key-step-1}
\tilde w_k(y)=o(1),\quad \nabla \tilde w_k=o(1) \quad \mbox{in}\quad B(e_1,\delta)\setminus B(e_1,\delta/8)
\end{equation}
where $B(e_1,3\delta)$ does not include other blowup points.
\end{lem}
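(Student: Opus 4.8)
\textbf{Plan of proof for Lemma \ref{w-around-e1}.}

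The plan is to exploit the normalization $\max_{\Omega_k}|\tilde w_k|=1$ together with the two vanishing conditions $\tilde w_k(e_1)=|\nabla\tilde w_k(e_1)|=0$ coming from (\ref{control-e}), and to compare $\tilde w_k$ near $e_1$ with bounded solutions of the linearized Liouville equation $\Delta W+e^U W=0$ on $\mathbb R^2$. First I would pass to the rescaled function $W_k(z)=\tilde w_k(e_1+\epsilon_k z)$ of (\ref{w-ar-e1}), which by (\ref{e-Wk}) satisfies $\Delta W_k+e^{\bar\xi_{k,0}}W_k=E_2^k$ with $|E_2^k(z)|\le C\epsilon_k(1+|z|)^{-3}$ and $|W_k|\le 1$, $W_k(0)=|\nabla W_k(0)|=0$. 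Since $\bar\xi_{k,0}\to U$ in $C^2_{loc}$ and $e^{\bar\xi_{k,0}}\le C(1+|z|)^{-4}$, elliptic estimates give $C^{1,\gamma}_{loc}$ convergence of $W_k$ along a subsequence to a bounded solution $W$ of $\Delta W+e^U W=0$ on $\mathbb R^2$. The kernel of this linearized operator among bounded functions is spanned by $\partial_{z_1}U$, $\partial_{z_2}U$, and the radial/dilation-generated element $\frac{\partial}{\partial\lambda}\big|_{\lambda=1}U_\lambda$ — but the latter is \emph{not} bounded (it grows like $\log|z|$), so actually the bounded kernel is exactly $\mathrm{span}\{\partial_{z_1}U,\partial_{z_2}U\}$. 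Both of these vanish at $z=0$ together with — no: their gradients do not vanish at $0$. Hence $W(0)=|\nabla W(0)|=0$ forces $W\equiv 0$.

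This gives $W_k\to 0$ in $C^1_{loc}(\mathbb R^2)$, i.e. $\tilde w_k=o(1)$ and $\nabla\tilde w_k=o(\epsilon_k^{-1})$ on any fixed compact set after the $\epsilon_k$-scaling, but that only controls a region of size $O(\epsilon_k)$ around $e_1$, whereas the Lemma asks for control on the fixed annulus $B(e_1,\delta)\setminus B(e_1,\delta/8)$. To bridge this gap I would perform a Harnack-type / maximum-principle argument on the intermediate region $B(e_1,\delta)\setminus B(e_1,R\epsilon_k)$, for $R$ large but fixed. On this region the coefficient $\mathfrak h_k(\delta_k y)|y|^{2N}e^{\xi_k}$ is small — of order $e^{\xi_k}\sim e^{V_k}\le C\epsilon_k^{-2}(\epsilon_k/|y-e_1|)^4 = C\epsilon_k^2|y-e_1|^{-4}$ away from $e_1$ (using the explicit form of $V_k$) — so the zeroth-order term is a harmless perturbation, and the right-hand side of (\ref{t-wk}) is $o(1)(1+|y-e_1|)\cdot|y|^{2N}e^{V_k}$ plus $\tilde E_1$, all of which integrate to $o(1)$. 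I would construct explicit super/sub-solutions (e.g. combinations of constants, $\log|y-e_1|$, and $|y-e_1|$-powers) dominating the inhomogeneous terms, using the boundary data $\tilde w_k=C+o(1)$ on $\partial\Omega_k$ propagated inward via Proposition 3.1 of \cite{wei-zhang-plms} (which controls $v_k-V_k$ on $B(e^{i\beta_l},\delta)$), to conclude $\tilde w_k=o(1)$ throughout $B(e_1,\delta)\setminus B(e_1,R\epsilon_k)$; interior gradient estimates then upgrade this to $\nabla\tilde w_k=o(1)$ on the slightly smaller annulus $B(e_1,\delta)\setminus B(e_1,\delta/8)$.

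\textbf{Main obstacle.} The delicate point is not the Liouville-bubble analysis near $e_1$ (that is standard nondegeneracy) but rather the matching of the $\epsilon_k$-scale estimate with the unit-scale estimate across the ``neck'' region $R\epsilon_k\le |y-e_1|\le\delta$: one must show that the smallness of $W_k$ at scale $\epsilon_k$ genuinely propagates outward without being overwhelmed by the accumulated contribution of the nonlinear term or by the as-yet-uncontrolled oscillation of $\tilde w_k$ elsewhere in $\Omega_k$. This requires a careful choice of barrier and a quantitative use of the decay $e^{V_k}\lesssim \epsilon_k^2|y-e_1|^{-4}$, together with the fact (to be extracted from (\ref{w-bry}) and Proposition 3.1 of \cite{wei-zhang-plms}) that $\tilde w_k$ is, to leading order, a harmonic-like function on $B(e_1,\delta)$ whose value and gradient vanish at the center — so it must be $o(1)$ on the whole disk. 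Getting the bookkeeping of error thresholds right here is exactly the ``subtler analysis'' advertised in the introduction, and it is where I expect the real work to lie.
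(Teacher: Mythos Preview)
Your bubble-scale analysis is mostly fine, though note one slip: the bounded kernel of $\Delta W+e^UW=0$ on $\mathbb R^2$ is \emph{three}-dimensional, not two. The dilation element is precisely $\phi_0=(1-|z|^2/8)/(1+|z|^2/8)$, which is bounded (it ranges between $-1$ and $1$); you seem to have confused it with the second radial fundamental solution, which does grow like $\log|z|$. This does not hurt your conclusion $W\equiv 0$, since $W(0)=0$ already kills the $\phi_0$ component via $\phi_0(0)=1$, and $\nabla W(0)=0$ kills $\phi_1,\phi_2$.

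The real gap is in your neck argument. Your barrier scheme needs boundary data on $\partial B(e_1,\delta)$, but that is exactly what the Lemma asserts; invoking Proposition 3.1 of \cite{wei-zhang-plms} to ``propagate inward'' does not supply it, because that proposition bounds $w_k$, not $\tilde w_k=w_k/M_k$, and under the contradiction hypothesis $M_k/\tilde\delta_k\to\infty$ you have no control on $M_k$ from below that would make the quotient small. Your closing claim --- that a ``harmonic-like function on $B(e_1,\delta)$ whose value and gradient vanish at the center'' must be $o(1)$ on the whole disk --- is simply false: $x_1^2-x_2^2$ is a counterexample.

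The paper sidesteps this entirely by a different mechanism. It first observes that away from \emph{all} $N+1$ blowup points, $|\tilde w_k|\le 1$ and the equation degenerates to $\Delta\tilde w_k=o(1)$, so any subsequential limit is a bounded harmonic function on $\mathbb R^2\setminus\{e^{i\beta_0},\dots,e^{i\beta_N}\}$, hence (removable singularities plus Liouville) a \emph{constant} $c$. Now the task is merely to show $c=0$. For this the paper takes the radial average $g_0^k(r)=\frac1{2\pi}\int_0^{2\pi}W_k(re^{i\theta})d\theta$, which solves a scalar ODE with right-hand side $O(\epsilon_k)(1+r)^{-3}$ and initial data $g_0^k(0)=(g_0^k)'(0)=0$; variation of parameters against the explicit fundamental pair $(g_{01}^k,g_{02}^k)\sim(\phi_0,\log r)$ yields $|g_0^k(r)|\le C\epsilon_k\log(2+r)$ uniformly on $0<r<\delta_0\epsilon_k^{-1}$. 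Since $g_0^k(\delta_0\epsilon_k^{-1})\to c$ while the bound gives $O(\epsilon_k\log\epsilon_k^{-1})=o(1)$, one gets $c=0$. The global-harmonic-hence-constant step, followed by the radial ODE, is the idea you are missing; it replaces your barrier construction and avoids any circularity.
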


\noindent{\bf Proof of Lemma  \ref{w-around-e1}:}

If (\ref{key-step-1}) is not true, we have, without loss of generality that
\begin{equation}\label{t-w-c}
\tilde w_k\to c>0.
\end{equation}
This is based on the fact that $\tilde w_k$ tends to a global harmonic function with removable singularity. So $\tilde w_k$ tends to constant. Here we assume $c>0$ but the argument for $c<0$ is the same. Recall that $W_k$ is defined in (\ref{w-ar-e1}).
Here we further claim that $W\equiv 0$ in $\mathbb R^2$ because $W(0)=|\nabla W(0)|=0$, a fact well known based on the classification of the kernel of the linearized operator. Going back to $W_k$, we have
$$W_k(z)=o(1),\quad |z|\le R_k \mbox{ for some } \quad R_k\to \infty. $$

Let
\begin{equation}\label{for-g0}
g_0^k(r)=\frac 1{2\pi}\int_0^{2\pi}W_k(r,\theta)d\theta.
\end{equation}
Then clearly $g_0^k(r)\to c>0$ for $r\sim \epsilon_k^{-1}$.
 The equation for $g_0^k$ is
\begin{align*}
&\frac{d^2}{dr^2}g_0^k(r)+\frac 1r \frac{d}{dr}g_0^k(r)+\mathfrak{h}_k(\delta_ke_1)e^{\bar \xi_{k,0}}g_0^k(r)=\tilde E_0^k(r)\\
&g_0^k(0)=\frac{d}{dr}g_0^k(0)=0.
\end{align*}
where
$$|\tilde E_0^k(r)|\le O(\epsilon_k)(1+r)^{-3}. $$

For the homogeneous equation, the two fundamental solutions are known: $g_{01}^k$, $g_{02}^k$, where, by elementary analysis, we obtain that $g_{01}^k$ tends to 
$$\frac{1-\frac 18r^2}{1+\frac 18 r^2}.$$
By the standard reduction of order process, $g_{02}^k(r)=O(\log r)$ for $r>1$ with bounds independent of $k$.
Then it is easy to obtain, assuming $|W_k(z)|\le 1$, that
\begin{align*}
|g_0^k(r)|\le C|g_{01}^k(r)|\int_0^r s|\tilde E_0^k(s) g_{02}^k(s)|ds+C|g_{02}^k(r)|\int_0^r s|g_{01}^k(s)\tilde E_0^k(s)|ds\\
\le C\epsilon_k\log (2+r). \quad 0<r<\delta_0 \epsilon_k^{-1}.
\end{align*}
Clearly this is a contradiction to (\ref{t-w-c}). We have proved $c=0$, which means $\tilde w_k=o(1)$ in $B(e_1, \delta_0)\setminus B(e_1, \delta_0/8)$.
Then it is easy to use the equation for $\tilde w_k$ and standard Harnack inequality to prove
$\nabla \tilde w_k=o(1)$ in the same region.
Lemma \ref{w-around-e1} is established. $\Box$

\medskip

The second estimate is a more precise description of $\tilde w_k$ around $e_1$:
\begin{lem}\label{t-w-1-better} For any given $\sigma\in (0,1)$ there exists $C>0$ such that
\begin{equation}\label{for-lambda-k}
|\tilde w_k(e_1+\epsilon_kz)|\le C\epsilon_k^{\sigma} (1+|z|)^{\sigma},\quad 0<|z|<\tau \epsilon_k^{-1}.
\end{equation}
for some $\tau>0$.
\end{lem}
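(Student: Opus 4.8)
The plan is to bootstrap the crude bound $|\tilde w_k|\le 1$ into the weighted estimate \eqref{for-lambda-k} by a standard ODE-comparison argument on the spherical average, combined with the decay of the linearized operator's potential. Write $W_k(z)=\tilde w_k(e_1+\epsilon_k z)$ as in \eqref{w-ar-e1}, so that $W_k$ solves \eqref{e-Wk} with $e^{\bar\xi_{k,0}(z)}\le C(1+|z|)^{-4}$ and $E_2^k(z)=O(\epsilon_k)(1+|z|)^{-3}$, and with $W_k(0)=|\nabla W_k(0)|=0$ coming from \eqref{control-e}. First I would decompose $W_k$ into Fourier modes $W_k=\sum_{j\ge 0} (g_{j,k}^c(r)\cos j\theta+g_{j,k}^s(r)\sin j\theta)$; each coefficient solves a linear ODE $g''+\frac1r g'-\frac{j^2}{r^2}g+\mathfrak h_k(\delta_k e_1)e^{\bar\xi_{k,0}}g=$ (projection of $E_2^k$), with zero initial data. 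For the radial part $j=0$ I would reuse the fundamental system $g_{01}^k\to \frac{1-r^2/8}{1+r^2/8}$, $g_{02}^k=O(\log r)$ already introduced in the proof of Lemma \ref{w-around-e1}; for $j=1$ the homogeneous solutions are, up to lower order, $\frac{r}{1+r^2/8}$ (bounded) and a second solution growing like $r$; for $j\ge 2$ one homogeneous solution decays and the other grows polynomially. The key structural point is that because $e^{\bar\xi_{k,0}}$ decays like $(1+|z|)^{-4}$, the potential is subcritical at infinity, so the growing homogeneous solutions grow at most linearly and the variation-of-constants integrals converge with the $(1+r)^{-3}$ forcing.

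Next, carrying out the variation-of-parameters estimate mode by mode: for the radial mode one gets $|g_{0,k}^c(r)|\le C\epsilon_k\log(2+r)$ exactly as in Lemma \ref{w-around-e1}; for each $j\ge 1$ a similar computation gives $|g_{j,k}(r)|\le C\epsilon_k (1+r)$ (the linear growth coming from the growing homogeneous solution paired with the integrable source). Summing over $j$ requires control of how many modes matter: here I would use the fact that $\tilde w_k$ is already $O(1)$ globally together with elliptic/Harnack estimates to get that the high modes of $W_k$ on each dyadic annulus $|z|\sim R$ are controlled (e.g. decay geometrically in $j$ after one elliptic iteration), so the series $\sum_j |g_{j,k}(r)|$ is dominated by finitely many leading terms plus a convergent tail, giving $|W_k(z)|\le C\epsilon_k(1+|z|)$ for $|z|\le \tau_1\epsilon_k^{-1}$. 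This already beats \eqref{for-lambda-k}; to get the cleaner statement with the exponent $\sigma\in(0,1)$ — which is what is actually needed downstream so that the weighted norm stays subcritical near $\partial\Omega_k$ where $|z|\sim\delta_k^{-1}\epsilon_k^{-1}\cdots$ — I would interpolate the bound $|W_k|\le C\epsilon_k(1+|z|)$ against the a priori bound $|W_k|\le 1$: namely $|W_k|\le |W_k|^{1-\sigma}(C\epsilon_k(1+|z|))^{\sigma}\le C\epsilon_k^{\sigma}(1+|z|)^{\sigma}$, which is precisely \eqref{for-lambda-k}. One should double-check the boundary contribution: on $\partial\Omega_k$ the oscillation of $w_k$ is $O(\delta_k^{N+1})$, hence after dividing by $M_k$ (with $M_k/\delta_k^2\to\infty$) the boundary data of $\tilde w_k$ is $C+o(1)$, and this constant part contributes only to the $j=0$ mode and is absorbed since we are measuring oscillation relative to $e_1$; care is needed that the constant does not spoil the weighted bound, but since $V_k$-driven oscillation is $O(\delta_k^{N+1})=o(\epsilon_k^{\sigma}(\delta_k^{-1})^{\sigma})$ for $N$ large this is harmless, and for $N=1$ one tracks it explicitly.

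The main obstacle I expect is the bookkeeping in the Fourier-mode sum: one must show that the growing homogeneous solutions for $j\ge 1$ do not conspire to produce growth worse than $(1+|z|)$, and that the implied constants are uniform in both $k$ and $j$. This is where the precise $(1+|z|)^{-4}$ decay of $e^{\bar\xi_{k,0}}$ and the $(1+|z|)^{-3}$ decay of the error $E_2^k$ are essential — a weaker decay would allow the modes to blow up. A secondary delicate point is matching the region of validity: the local estimate near $e_1$ lives in $|z|\le \tau_1\epsilon_k^{-1}$, while the conclusion is stated for $|z|<\tau\epsilon_k^{-1}$; under the standing assumption \eqref{delta-small-1} that $\delta_k=o(\epsilon_k\mu_k^{1/2})$ the bubbling disk $B(e_1,\tau_1)$ occupies only a bounded-in-$\epsilon_k^{-1}$-scaled region, and Lemma \ref{w-around-e1} already provides the $o(1)$ transition on the annulus $B(e_1,\delta)\setminus B(e_1,\delta/8)$, so the ODE bound and the outer harmonic-estimate match up on that overlap; I would glue them with a maximum-principle/Harnack comparison using the explicit decaying homogeneous solution $g_{01}^k$ as a barrier.
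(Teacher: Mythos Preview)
Your mode-by-mode approach contains a genuine gap. The assertion that ``the growing homogeneous solutions grow at most linearly'' is false for modes $j\ge 2$: the equation $g''+\frac1r g'+(e^{\bar\xi_{k,0}}-\frac{j^2}{r^2})g=0$ behaves at infinity like $g''+\frac1r g'-\frac{j^2}{r^2}g=0$, whose fundamental solutions are $r^{\pm j}$, so the regular-at-zero solution grows like $r^j$, not $r$. More critically, for $j\ge 2$ the conditions $W_k(0)=|\nabla W_k(0)|=0$ give \emph{no} constraint, since the regular solution already vanishes to order $r^j$; the $j$-th mode is determined solely by the boundary data on $|z|=\tau\epsilon_k^{-1}$, which is only $o(1)$. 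If that boundary datum is, say, of order $\epsilon_k^{1/2}$ in some mode $j\ge 2$, the resulting $g_{j,k}(r)$ is of order $\epsilon_k^{1/2}(r/R_k)^j$, giving $|W_k(z)|\sim \epsilon_k^{1/2}$ at $|z|\sim R_k/2$, which violates the linear bound $C\epsilon_k(1+|z|)$ you are trying to establish. This is exactly why Remark~\ref{t-w-1-better}+1 (immediately following the lemma) says the $\epsilon_k^{\sigma}$ bound is the best available at this stage: the boundary information is only $o(1)$, not $O(\epsilon_k)$. Since your interpolation $|W_k|\le |W_k|^{1-\sigma}(C\epsilon_k(1+|z|))^{\sigma}$ rests on that unestablished linear bound, the whole argument collapses.

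The paper's proof avoids this by a contradiction/blow-up argument: set $\Lambda_k=\max \frac{|W_k(z)|}{\epsilon_k^{\sigma}(1+|z|)^{\sigma}}$, suppose $\Lambda_k\to\infty$, and normalize $g_k(z)=W_k(z)/(\Lambda_k(1+|z_k|)^{\sigma}\epsilon_k^{\sigma})$ where $z_k$ realizes the max. The point of the exponent $\sigma<1$ is that $|g_k(z)|\le (1+|z|)^{\sigma}/(1+|z_k|)^{\sigma}$ has \emph{sublinear} growth, so if $z_k$ stays bounded the limit must lie in the three-dimensional kernel $\mathrm{span}\{\phi_0,\phi_1,\phi_2\}$ and the vanishing at $0$ kills it; if $|z_k|\to\infty$ a direct Green's-function estimate on $g_k(z_k)-g_k(0)$ gives the contradiction. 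This blow-up mechanism sidesteps the mode-$j\ge 2$ issue entirely because sublinear growth forces those modes to vanish in the limit.
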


\begin{rem}
    Lemma \ref{t-w-1-better} is an intermediate estimate for $\tilde w_k$. We eventually need to have an estimate starting with $o(\epsilon_k)$. The reason that we cannot obtian more precise estimate is because we only know $\tilde w=o(1)$ on $\partial B(e_1,\tau)$. More precise information is needed to have a better estimate.
\end{rem}
\noindent{\bf Proof of Lemma \ref{t-w-1-better}:} Let $W_k$ be defined as in (\ref{w-ar-e1}). In order to obtain a better estimate we need to write the equation of $W_k$ more precisely than (\ref{e-Wk}):
\begin{equation}\label{w-more}
\Delta W_k+\mathfrak{h}_k(\delta_ke_1)e^{\Theta_k}W_k=E_3^k(z), \quad z\in \Omega_{Wk}
\end{equation}
where
$\Theta_k$ is defined by
$$e^{\Theta_k(z)}=|e_1+\epsilon_k z|^{2N}e^{\xi_k(e_1+\epsilon_kz)+2\log \epsilon_k}, $$
$\Omega_{Wk}=B(0,\tau \epsilon_k^{-1})$ and $E_3^k(z)$ satisfies
$$E_3^k(z)=O(\epsilon_k)(1+|z|)^{-3},\quad z\in \Omega_{Wk}. $$
Here we observe that by Lemma \ref{w-around-e1}  $W_k=o(1)$ on $\partial \Omega_{Wk}$. 
Let
$$\Lambda_k=\max_{z\in \Omega_{Wk}}\frac{|W_k(z)|}{\epsilon_k^{\sigma}(1+|z|)^{\sigma}}. $$
If (\ref{for-lambda-k}) does not hold, $\Lambda_k\to \infty$ and we use $z_k$ to denote where $\Lambda_k$ is attained. Note that because of the smallness of $W_k$ on $\partial \Omega_{Wk}$, $z_k$ is an interior point. Let
$$g_k(z)=\frac{W_k(z)}{\Lambda_k (1+|z_k|)^{\sigma}\epsilon_k^{\sigma}},\quad z\in \Omega_{Wk}, $$
we see immediately that
\begin{equation}\label{g-sub-linear}
|g_k(z)|=\frac{|W_k(z)|}{\epsilon_k^{\sigma}\Lambda_k(1+|z|)^{\sigma}}\cdot \frac{(1+|z|)^{\sigma}}{(1+|z_k|)^{\sigma}}\le  \frac{(1+|z|)^{\sigma}}{(1+|z_k|)^{\sigma}}.
\end{equation}
Note that $\sigma$ can be as close to $1$ as needed. The equation of $g_k$ is
$$\Delta g_k(z)+\mathfrak{h}_k(\delta_k e_1)e^{\Theta_k}g_k=o(\epsilon_k^{1-\sigma})\frac{(1+|z|)^{-3}}{(1+|z_k|)^{\sigma}}, \quad \mbox{in}\quad \Omega_{Wk}. $$
Then we can obtain a contradiction to $|g_k(z_k)|=1$ as follows: If $\lim_{k\to \infty}z_k=P\in \mathbb R^2$, this is not possible because that fact that $g_k(0)=|\nabla g_k(0)|=0$ and the sub-linear growth of $g_k$ in (\ref{g-sub-linear}) implies that $g_k\to 0$ over any compact subset of $\mathbb R^2$ (see \cite{chenlin1,zhangcmp}). So we have $|z_k|\to \infty$. But this would lead to a contradiction again by using the Green's representation of $g_k$:
\begin{align} \label{temp-1}
&\pm 1=g_k(z_k)=g_k(z_k)-g_k(0)\\
&=\int_{\Omega_{k,1}}(G_k(z_k,\eta)-G_k(0,\eta))(\mathfrak{h}_k(\delta_k e_1)e^{\Theta_k}g_k(\eta)+o(\epsilon_k^{1-\sigma})\frac{(1+|\eta |)^{-3}}{(1+|z_k|)^{\sigma}})d\eta+o(1).\nonumber
\end{align}
where $G_k(y,\eta)$ is the Green's function on $\Omega_{Wk}$ and $o(1)$ in the equation above comes from the smallness of $W_k$ on $\partial \Omega_{Wk}$. Let $L_k=\tau\epsilon_k^{-1}$, the expression of $G_k$ is 
$$G_k(y,\eta)=-\frac{1}{2\pi}\log |y-\eta|+\frac 1{2\pi}\log (\frac{|\eta |}{L_k}|\frac{L_k^2\eta}{|\eta |^2}-y|). $$
$$G_k(z_k,\eta)-G_k(0,\eta)=-\frac{1}{2\pi}\log |z_k-\eta |+\frac 1{2\pi}\log |\frac{z_k}{|z_k|}-\frac{\eta z_k}{L_k^2}|+\frac 1{2\pi}\log |\eta |. $$
Using this expression in (\ref{temp-1}) we obtain from elementary computation that the right hand side of (\ref{temp-1}) is $o(1)$, a contradiction to $|g_k(z_k)|=1$. Lemma \ref{t-w-1-better} is
established. $\Box$

\medskip

The smallness of $\tilde w_k$ around $e_1$ can be used to obtain the following third key estimate:
\begin{lem}\label{small-other}
\begin{equation}\label{key-step-2}
\tilde w_k=o(1)\quad \mbox{in}\quad B(e^{i\beta_l},\tau)\quad l=1,..,N.
\end{equation}
\end{lem}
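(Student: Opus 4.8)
The plan is to carry the smallness of $\tilde w_k$ near $e_1$, obtained in Lemmas \ref{w-around-e1} and \ref{t-w-1-better}, over to the remaining bubbling disks, in three stages: in the region between the disks, on the boundaries $\partial B(e^{i\beta_l},\tau)$, and finally inside each disk.

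\textbf{Stage 1 (between the disks).} In $B_3\setminus\bigcup_{l=0}^N B(e^{i\beta_l},\tau_1)$, with $\tau_1$ fixed and small, the zeroth order coefficient $|y|^{2N}\mathfrak{h}_k(\delta_ke_1)e^{\xi_k}$ and the right hand side of (\ref{t-wk}) both tend to $0$ uniformly, of order $O(e^{-\mu_k})$: this uses $e^{\xi_k}\sim e^{V_k}$, the expansion of $V_k$ away from the bubbles, $a_k\to0$, and (\ref{t-ek}). Since $|\tilde w_k|\le1$, interior elliptic estimates give, along a subsequence, $\tilde w_k\to\tilde w$ in $C^1_{loc}(B_3\setminus\{e^{i\beta_0},\dots,e^{i\beta_N}\})$ with $\Delta\tilde w=0$ there. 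By Lemma \ref{w-around-e1}, $\tilde w\equiv0$ in a punctured neighbourhood of $e_1$; since $B_3\setminus\{e^{i\beta_0},\dots,e^{i\beta_N}\}$ is connected and $\tilde w$ is harmonic, $\tilde w\equiv0$. In particular $\tilde w_k=o(1)$ on $\partial B(e^{i\beta_l},\tau)$ and on $B(e^{i\beta_l},\tau)\setminus B(e^{i\beta_l},\tau/8)$ for every $l$.

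\textbf{Stage 2 (the radial mode).} Fix $l\in\{1,\dots,N\}$ and put $\tilde W_k^{(l)}(z)=\tilde w_k(e^{i\beta_l}+\epsilon_kz)$. As in (\ref{w-more})--(\ref{e-Wk}) the rescaled coefficient converges to $e^U$ and the rescaled right hand side is $O(\epsilon_k)(1+|z|)^{-3}$, so along a subsequence $\tilde W_k^{(l)}\to\tilde W^{(l)}$ in $C^2_{loc}(\R^2)$ with $|\tilde W^{(l)}|\le1$ and $\Delta\tilde W^{(l)}+e^U\tilde W^{(l)}=0$; by the classification of the kernel, $\tilde W^{(l)}=c_0\varphi_0+c_1\varphi_1+c_2\varphi_2$ with $\varphi_0=\tfrac{1-|z|^2/8}{1+|z|^2/8}$ and $\varphi_1,\varphi_2$ the translation modes. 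To eliminate $c_0$ I repeat the ODE argument of Lemma \ref{w-around-e1}: the radial average $g_0^{(l),k}(r)$ solves the radial linearized equation with forcing $O(\epsilon_k)(1+r)^{-3}$, whose fundamental solutions behave like $\varphi_0(r)\to-1$ and $\log r\to+\infty$; boundedness of $\tilde W_k^{(l)}$ up to $r\sim\epsilon_k^{-1}$ makes the $\log$-coefficient $o(1)$, and then the smallness $g_0^{(l),k}(r)=o(1)$ at $r\sim\epsilon_k^{-1}$ from Stage 1 makes the $\varphi_0$-coefficient $o(1)$. Hence $g_0^{(l),k}=o(1)$ on $(0,\tau\epsilon_k^{-1})$, so $c_0=0$; in particular $\tilde w_k(e^{i\beta_l})=g_0^{(l),k}(0)=o(1)$.

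\textbf{Stage 3 (the translation modes, and conclusion).} The dipole $c_1\varphi_1+c_2\varphi_2$ is invisible to the boundary control: $\varphi_1,\varphi_2$ decay like $1/|z|$, so a nontrivial dipole is already $o(1)$ at $|z|\sim\epsilon_k^{-1}$, and a matched asymptotics argument in the neck $B(e^{i\beta_l},\tau_1)\setminus B(e^{i\beta_l},\epsilon_kR)$ only yields $c_1,c_2=O(1)$. To close this one must exploit finer information. Since $\nabla v_k(Q_l^k)=0=\nabla V_k(e^{i\beta_l})$, a Taylor expansion at $e^{i\beta_l}$ together with $|D^2V_k(e^{i\beta_l})|\sim\epsilon_k^{-2}$ shows the dipole part of $w_k$ near $e^{i\beta_l}$ is governed by $(Q_l^k-e^{i\beta_l})/\epsilon_k$, which by (\ref{distance}) and (\ref{Qm-close}) is $O(\mu_ke^{-\mu_k/2})+O(\delta_k^2e^{\mu_k/2})$; equivalently, a Pohozaev identity for the difference $w_k$ on $B(e^{i\beta_l},\tau_1)$ (comparing (\ref{e-f-vk}) with (\ref{eq-for-Vk}), tested with the translation vector fields) relates that dipole to $\int_{B(e^{i\beta_l},\tau_1)}\nabla_x\!\big(|x|^{2N}\mathfrak{h}_k(\delta_kx)\big)e^{v_k}$ --- hence to $\nabla\mathfrak{h}_k(0)$ --- and to the boundary data on $\partial B(e^{i\beta_l},\tau_1)$, which is $o(1)$ by Stage 1. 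Dividing by $M_k$ and using the contradiction hypothesis $M_k/\tilde{\delta_k}\to\infty$ together with (\ref{delta-small-1}) and (\ref{big-mk}), one gets $c_1=c_2=0$, i.e. $\tilde W^{(l)}\equiv0$; thus $\tilde w_k=o(1)$ on $B(e^{i\beta_l},\epsilon_kR)$ for each fixed $R$. A final Harnack/elliptic bootstrap across the neck (nonnegative coefficient, small right hand side, $\tilde w_k=o(1)$ at both ends) upgrades this to $\tilde w_k=o(1)$ uniformly on $B(e^{i\beta_l},\tau)$, which is (\ref{key-step-2}). The hard part is Stage 3: the boundary control at scale $\tau$ lies too far out to detect the translation modes, so one must track the error thresholds carefully and invoke a Pohozaev-type identity (or the precise position of $Q_l^k$) to tie the dipole strength to $\nabla\mathfrak{h}_k(0)$ and kill it.
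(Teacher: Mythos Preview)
Your Stages 1 and 2 are essentially correct and match what the paper does (the harmonic-limit argument between the disks is implicit in the paper, and your radial ODE argument for $c_0=0$ is the paper's Step one). The genuine gap is Stage 3: neither of your two proposed mechanisms actually kills the translation modes.

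For the position argument: you are right that the dipole of $\tilde w_k$ near $e^{i\beta_l}$ scales like $(Q_l^k-e^{i\beta_l})/(\epsilon_kM_k)$, and (\ref{Qm-close}) together with (\ref{delta-small-1}) gives $(Q_l^k-e^{i\beta_l})/\epsilon_k=O(\mu_k\epsilon_k)$. But the contradiction hypothesis $M_k/\tilde\delta_k\to\infty$ does \emph{not} give $M_k\gg\mu_k\epsilon_k$. Indeed, from the a priori bound $|\nabla\mathfrak{h}_k(0)|=O(\delta_k^{-1}\mu_k\epsilon_k^2)+O(\delta_k)$ one only gets $\tilde\delta_k=O(\mu_k\epsilon_k^2)$, so $M_k\gg\mu_k\epsilon_k^2$ is all you can extract --- a full factor of $\epsilon_k$ short. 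In fact the paper shows later (see (\ref{upper-bound-Mk})) that $M_k=O(\mu_k\epsilon_k)$, so $\mu_k\epsilon_k/M_k$ is typically bounded \emph{below}, and the dipoles are genuinely $O(1)$, not $o(1)$. For the Pohozaev alternative: when you difference the identities for $v_k$ and $V_k$ on $B(e^{i\beta_l},\tau_1)$, the leading interior contribution $\int\partial_\xi(|y|^{2N})e^{V_k}w_k$ vanishes against $\phi_1,\phi_2$ by odd symmetry; the dipole only enters at the next order with an extra factor $\epsilon_k$, so you recover only $\epsilon_k c_i=o(1)$, which is vacuous.

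What the paper actually does is far more delicate and uses all $N+1$ bubbles at once. It introduces a separate global solution $V_{l,k}$ matched at each $Q_l^k$, proves all the $M_{l,k}=\max|v_k-V_{l,k}|$ are comparable, and computes $c_{1,s,l},c_{2,s,l}$ explicitly as limits of $|p_s^k-p_l^k|/(\epsilon_kM_k)$ times trigonometric factors in the angles $\beta_s,\theta_{sl}$ (see (\ref{c-12})). Then, using the Green's representation of $\tilde w_{l,k}$ and the constraint $\nabla\tilde w_{l,k}(Q_l^k)=0$ for \emph{every} $l$, it derives the algebraic system $\sum_s a_s\cos\theta_s=\sum_s a_s\sin\theta_s=0$ with $a_s\ge0$, and shows by a case analysis (ruling out $N=1$, $N\ge3$, and finally $N=2$ separately) that the only solution is $a_s\equiv0$. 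The coupling between bubbles is essential; no bubble-by-bubble argument of the type you sketch can close.
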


\noindent{\bf Proof of Lemma \ref{small-other}:}
We abuse the notation $W_k$ by defining it as
$$W_k(z)=\tilde w_k(e^{i\beta_l}+\epsilon_k z),\quad z\in \Omega_{k,l}:=B(0,\tau \epsilon_k^{-1}). $$
Here we point out that based on (\ref{Qm-close}) and (\ref{delta-small-1}) we have $\epsilon_k^{-1}|Q_l^k-e^{i\beta_l}|\to 0$. So the scaling around $e^{i\beta_l}$ or $Q_l^k$ does not affect the
limit function.

$$\epsilon_k^2 |e^{i\beta_l}+\epsilon_kz|^{2N}\mathfrak{h}_k(\delta_ke_1)e^{\xi_k(e^{i\beta_l}+\epsilon_kz)}\to e^{U(z)} $$
where $U(z)$ is a solution of
$$\Delta U+e^U=0,\quad \mbox{in}\quad \mathbb R^2, \quad \int_{\mathbb R^2}e^U<\infty. $$
Here we recall that $\lim_{k\to \infty} \mathfrak{h}_k(\delta_k e_1)=1$.
Since $W_k$ converges to a solution of the linearized equation:
$$\Delta W+e^UW=0, \quad \mbox{in}\quad \mathbb R^2. $$
If the growth of $W$ at infinity is sub-linear: $|W(x)|=o(1)|x|$ when $|x|\to \infty$, $W$ can be written as a linear combination of three functions:
$$W(x)=c_0\phi_0+c_1\phi_1+c_2\phi_2, $$
where
$$\phi_0=\frac{1-\frac 18 |x|^2}{1+\frac 18 |x|^2} $$
$$\phi_1=\frac{x_1}{1+\frac 18 |x|^2},\quad \phi_2=\frac{x_2}{1+\frac 18|x|^2}. $$
The remaining part of the proof consisting of proving $c_0=0$ and $c_1=c_2=0$. First we prove $c_0=0$.

\noindent{\bf Step one: $c_0=0$.}
First we write the equation for $W_k$ in a convenient form. Since
$$|e^{i\beta_l}+\epsilon_kz|^{2N}\mathfrak{h}_k(\delta_ke_1)=\mathfrak{h}_k(\delta_ke_1)+O(\epsilon_k |z|),$$
and
$$\epsilon_k^2e^{\xi_k(e^{i\beta_l}+\epsilon_kz)}=e^{U_k(z)}+O(\epsilon_k^{\epsilon})(1+|z|)^{-3}. $$
Based on (\ref{t-wk}) we write the equation for $W_k$ as
\begin{equation}\label{around-l}
\Delta W_k(z)+\mathfrak{h}_k(\delta_ke_1)e^{U_k}W_k=E_l^k(z)
\end{equation}
where
$$E_l^k(z)=O(\epsilon_k^{\epsilon})(1+|z|)^{-3}\quad \mbox{in}\quad \Omega_{k,l}.$$
In order to prove $c_0=0$, the key is to control the derivative of $W_0^k(r)$ where
$$W_0^k(r)=\frac 1{2\pi r}\int_{\partial B_r} W_k(re^{i\theta})dS, \quad 0<r<\tau \epsilon_k^{-1}. $$
To obtain a control of $\frac{d}{dr}W_0^k(r)$ we use $\phi_0^k(r)$ as the radial solution of
$$\Delta \phi_0^k+\mathfrak{h}_k(\delta_k e_1)e^{U_k}\phi_0^k=0, \quad \mbox{in }\quad \mathbb R^2. $$

When $k\to \infty$, $\phi_0^k\to c_0\phi_0$. Thus using the equation for $\phi_0^k$ and $W_k$, we have
\begin{equation}\label{c-0-pf}
\int_{\partial B_r}(\partial_{\nu}W_k\phi_0^k-\partial_{\nu}\phi_0^kW_k)=o(\epsilon_k^{\epsilon}). \end{equation}

Thus from (\ref{c-0-pf}) we have
\begin{equation}\label{W-0-d}
\frac{d}{dr}W_0^k(r)=\frac{1}{2\pi r}\int_{\partial B_r}\partial_{\nu}W_k=o(\epsilon_k^{\epsilon})/r+O(1/r^3),\quad 1<r<\tau \epsilon_k^{-1}.
\end{equation}
Since we have known that
$$W_0^k(\tau \epsilon_k^{-1})=o(1). $$
By the fundamental theorem of calculus we have
$$W_0^k(r)=W_0^k(\tau\epsilon_k^{-1})+\int_{\tau \epsilon_k^{-1}}^r(\frac{o(\epsilon_k^{\epsilon})}{s}+O(s^{-3}))ds=O(1/r^2)+O(\epsilon_k^{\epsilon}\log
\frac{1}{\epsilon_k}) $$
for $r\ge 1$. Thus
$c_0=0$ because $W_0^k(r)\to c_0\phi_0$, which means when $r$ is large, it is $-c_0+O(1/r^2)$.

\medskip

\noindent{\bf Step two: $v_k$ is close to a global solution near each $Q_l^k$}.
 We first observe once we have proved $c_1=c_2=c_0=0$ around each $e^{i\beta_l}$, it is easy to use maximum principle to prove $\tilde w_k=o(1)$ in $B_3$ using $\tilde w_k=o(1)$ on $\partial B_3$ and the Green's representation of $\tilde w_k$. The smallness of $\tilde w_k$ immediately implies $\tilde w_k=o(1)$ in $B_R$ for any fixed $R>>1$. Outside $B_R$, a crude estimate of $v_k$ is
  $$v_k(y)\le -\mu_k-4(N+1)\log |y|+C, \quad 3<|y|<\tau \delta_k^{-1}. $$
  Using this and the Green's representation of $w_k$ we can first observe that the oscillation on each $\partial B_r$ is $o(1)$ ($R<r<\tau \delta_k^{-1}/2$) and then by the Green's representation of $\tilde w_k$ and fast decay rate of $e^{V_k}$ we obtain $\tilde w_k=o(1)$ in $\overline{B(0,\tau \delta_k^{-1})}$. A contradiction to $\max |\tilde w_k|=1$.

 There are $N+1$ local maximums with one of them being $e_1$. Correspondingly there are $N+1$ global solutions $V_{l,k}$ that
approximate $v_k$ accurately near $Q_l^k$ for $l=0,...,N$. Note that $Q_0^k=e_1$. For $V_{l,k}$ the expression is
$$V_{l,k}=\log \frac{e^{\mu_l^k}}{(1+\frac{e^{\mu_l^k}}{D_l^k}|y^{N+1}-(e_1+p_l^k)|^2)^2},\quad l=0,...,N, $$
where $p_l^k=E$ and
\begin{equation}\label{def-D}
D_l^k=8(N+1)^2/\mathfrak{h}_k(\delta_kQ_l^k).
\end{equation}
The equation that $V_{l,k}$ satisfies is 
$$\Delta V_{l,k}+|y|^{2N}\mathfrak{h}_k(\delta_k Q_l^k)e^{V_{l,k}}=0,\quad \mbox{in}\quad \mathbb R^2. $$
Since $v_k$ and $V_{l,k}$ have the same common local maximum at $Q_l^k$, it is easy to see that
\begin{equation}\label{ql-exp}
Q_l^k=e^{i\beta_l}+\frac{p_l^ke^{i\beta_l}}{N+1}+O(|p_l^k|^2),\quad \beta_l=\frac{2l\pi}{N+2}.
\end{equation}
Let $M_{l,k}$ be the maximum of $|v_k-V_{l,k}|$ and we claim that all these $M_{l,k}$ are comparable:
\begin{equation}\label{M-comp}
M_{l,k}\sim M_{s,k},\quad \forall s\neq l.
\end{equation}
The proof of (\ref{M-comp}) is as follows: We use $L_{s,l}$ to denote the limit of $(v_k-V_{l,k})/M_{l,k}$ around $Q_s^k$:
\begin{equation}\label{what-linear-1}
\frac{(v_k-V_{l,k})(Q_s^k+\epsilon_kz)}{M_{l,k}}=L_{s,l}+o(1),\quad |z|\le \tau \epsilon_k^{-1} 
\end{equation}
where
$$ L_{s,l}=c_{1,s,l}\frac{z_1}{1+\frac 18 |z|^2}+c_{2,s,l}\frac{z_2}{1+\frac 18 |z|^2},\quad \mbox{and}\quad L_{l,l}=0, \quad s=0,...,N. $$
If all $c_{1,s,l}$ and $c_{2,s,l}$ are zero for a fixed $l$, we can obtain a contradiction just like the beginning of step two. So at least one of them is not zero.
For each $s\neq l$, by Lemma \ref{t-w-1-better} we have
\begin{equation}\label{Q-bad}
v_k(Q_s^k+\epsilon_kz)-V_{s,k}(Q_s^k+\epsilon_kz)=O(\epsilon_k^{\sigma})(1+|z|)^{\sigma} M_{s,k},\quad |z|<\tau \epsilon_k^{-1}.
\end{equation}
Let $M_k=\max_{i}M_{i,k}$ ($i=0,...,N$) and we suppose $M_k=M_{l,k}$. Then to determine $L_{s,l}$ we see that
\begin{align}\label{what-linear-2}
    &\frac{v_k(Q_s^k+\epsilon_k z)-V_{l,k}(Q_s^k+\epsilon_kz)}{M_k}\\
    =&o(\epsilon_k^{\sigma})(1+|z|)^{\sigma}+\frac{V_{s,k}(Q_s^k+\epsilon_kz)-V_{l,k}(Q_s^k+\epsilon_kz)}{M_k}. \nonumber
\end{align}

\noindent{\bf Step 3: Use global solutions to determine $L_{s,l}$.}
(\ref{what-linear-1}) and (\ref{what-linear-2}) provide crucial information to determine the coefficients of $L_{s,l}$. From them we know that $L_{s,l}$ is mainly determined by the difference of two global solutions $V_{s,k}$ and $V_{l,k}$. In order to obtain a contradiction to our assumption we will put the difference in several terms. The main idea in this part of the reasoning is that ``first order terms" tell us what the kernel functions should be, then the ``second order terms" tell us where the pathology is. 

We write $V_{s,k}(y)-V_{l,k}(y)$ as
$$V_{s,k}(y)-V_{l,k}(y)=\mu_s^k-\mu_l^k+2A-A^2+O(|A|^3) $$
where
$$A(y)=\frac{\frac{e^{\mu_l^k}}{D_l^k}|y^{N+1}-e_1-p_l^k|^2-\frac{e^{\mu_s^k}}{D_s^k}|y^{N+1}-e_1-p_s^k|^2}{1+\frac{e^{\mu_s^k}}{D_s^k}|y^{N+1}-e_1-p_s^k|^2}.$$
Here for convenience we abuse the notation $\epsilon_k$ by assuming $\epsilon_k=e^{-\mu_s^k/2}$. Note that $\epsilon_k=e^{-\mu_t^k/2}$ for some $t$, but it does not matter which $t$ it is. From $A$ we claim that 
\begin{align}\label{late-1}
&V_{s,k}(Q_s^k+\epsilon_kz)-V_{l,k}(Q_s^k+\epsilon_kz)\\
=&\phi_1+\phi_2+\phi_3+\phi_4+\mathfrak{R},\nonumber
\end{align}
where
\begin{align*}
&\phi_1=(\mu_s^k-\mu_l^k)\frac{1-\frac{\mathfrak{h}_k(\delta_kQ_s^k)}{8}|z+\frac{N}2\epsilon_k z^2e^{-i\beta_s}+O(\epsilon_k^2)(1+|z|)^3|^2}{\mathrm{B}}, \\
&\phi_2=\frac{\mathfrak{h}_k(\delta_kQ_s^k)}{4 \mathrm{B}} \delta_k\nabla \log \mathfrak{h}_k(\delta_k Q_s^k)(Q_l^k-Q_s^k)\\
&\qquad \cdot \bigg |z+\frac{(p_s^k-p_l^k)e^{i\beta_s}}{(N+1)\epsilon_k}+\frac{N}2\epsilon_kz^2e^{-i\beta_s}+O(\epsilon_k^2(1+|z|)^3 \bigg |^2\\
&\phi_3=\frac{\mathfrak{h}_k(\delta_kQ_s^k)}{2\mathrm{B}}Re((z+\frac{N}2\epsilon_ke^{-i\beta_s}z^2+O(\epsilon_k^2)(1+|z|)^3))(\frac{\bar p_s^k-\bar p_l^k}{(N+1)\epsilon_k}e^{-i\beta_s}))\\
&\phi_4=\frac{\mathfrak{h}_k(\delta_kQ_s)}4\frac{|p_s^k-p_l^k|^2}{(N+1)^2\epsilon_k^2}\frac{1}{\mathrm{B}^2}(1-\frac{\mathfrak{h}_k(\delta_kQ_s)}8|z|^2\cos (2\theta-2\theta_{st}-2\beta_s) ),\\
&\mathrm{B}=1+\frac{\mathfrak{h}_k(\delta_kQ_s^k)}{8}|z+\frac N2\epsilon_ke^{-i\beta_s}z^2+O(\epsilon_k^2(1+|z|)^3)|^2,
\end{align*}
and $\mathfrak{R}_k$ is the collections of other insignificant terms.  Here we briefly explain the roles of each term. $\phi_1$ corresponds to the radial solution in the kernel of the linearized operator of the global equation. In other words, $\phi_1^k/M_k$ should tend to zero because in step one we have proved $c_0=0$. $\phi_2^k/M_k$ is the combination of the two other functions in the kernel. $\phi_4$ is the second order term which will play a leading role later. $\phi_3^k$ comes from the difference of $\mathfrak{h}_k$ at $Q_l^k$ and $Q_s^k$. The derivation of (\ref{late-1}) is as follows: Here we use simplified notations for convenience. First we list the following elementary expressions:
\begin{equation}\label{elem-1}
e^{\mu_l}=e^{\mu_s}(1+(\mu_l-\mu_s)+O((\mu_l-\mu_s)^2)).
\end{equation}
By the definition of $D_s^k$ in (\ref{def-D})
\begin{equation}\label{D-ls}
\frac 1{D_l}=\frac{1}{D_s}(1+\frac{D_s-D_l}{D_l})
=\frac{1}{D_s}(1+\delta_k\nabla \log \mathfrak{h}_k(\delta_kQ_s)(Q_l-Q_s))+O(\delta_k^2)
\end{equation}
\begin{align}\label{elem-2}
&|y^{N+1}-e_1-p_l|^2-|y^{N+1}-e_1-p_s|^2\\
=&|y^{N+1}-1-p_s+(p_s-p_l)|^2-|y^{N+1}-1-p_s|^2 \nonumber\\
=&2 Re\bigg ((y^{N+1}-1-p_s)(\bar p_s-\bar p_l)\bigg )+|p_l-p_s|^2. \nonumber
\end{align}
Using (\ref{elem-1}),(\ref{D-ls}) and (\ref{elem-2}) we have
\begin{align}\label{elem-3}
&\frac{e^{\mu_l}}{D_l}|y^{N+1}-e_1-p_l|^2-\frac{e^{\mu_s}}{D_s}|y^{N+1}-e_1-p_s|^2\\
=&\frac{e^{\mu_s}}{D_s}(2 Re\bigg ((y^{N+1}-1-p_s)(\bar p_s-\bar p_l)\bigg )+|p_s-p_l|^2)\nonumber\\
+&\frac{e^{\mu_s}}{D_s}|y^{N+1}-1-p_l|^2(\delta_k \nabla \log \mathfrak{h}_k(\delta_kQ_s)(Q_l-Q_s)+\mu_l-\mu_s+E_{c,k}) \nonumber
\end{align}
where $E_{c,k}$ is a constant of the size $O((\mu_l-\mu_s)^2)+O(\delta_k^2)$. By the expression of $Q_s^k$ in (\ref{ql-exp}) we have, for $y=Q_s^k+\epsilon_k z$,  
$$y^{N+1}=1+p_s+(N+1)\epsilon_kze^{-i\beta_s}+\frac{N(N+1)}2\epsilon_k^2z^2e^{-2i\beta_s}+O(\epsilon_k^3)(1+|z|)^3, $$
which yields
\begin{equation}\label{elem-4}|y^{N+1}-e_1-p_s|^2=(N+1)^2\epsilon_k^2|z+\frac{N}2\epsilon_ke^{-i\beta_s}z^2+O(\epsilon_k^2)(1+|z|)^3|^2
\end{equation}
\begin{align}\label{elem-5}
&|y^{N+1}-e_1-p_l|^2\\
=&(N+1)^2\epsilon_k^2|z+\frac{(p_s-p_l)e^{i\beta_s}}{(N+1)\epsilon_k} +\frac{N}2\epsilon_k z^2e^{-i\beta_s}+O(\epsilon_k^2)(1+|z|)^3|^2. \nonumber
\end{align}
Using (\ref{elem-4}) and (\ref{elem-5}) in (\ref{elem-3}) we have
\begin{align}\label{exp-A-2}
&A=\frac{\frac{e^{\mu_l^k}}{D_l^k}|y^{N+1}-e_1-p_l|^2-\frac{e^{\mu_s^k}}{D_s^k}|y^{N+1}-e_1-p_s|^2}{1+\frac{e^{\mu_s^k}}{D_s^k}|y^{N+1}-e_1-p_s|^2}\\
&=\frac{\mathfrak{h}_k(\delta_kQ_s)}{8}\bigg (2Re [(z+\frac{N}2\epsilon_k e^{-\beta_s}z^2+O(\epsilon_k^2)(1+|z|)^3)\frac{\bar p_s-\bar p_l}{(N+1)\epsilon_k}e^{-i\beta_s}] \nonumber\\
&+|\frac{p_s-p_l}{(N+1)\epsilon_k}|^2+\bigg |z+\frac{(p_s-p_l)e^{i\beta_s}}{(N+1)\epsilon_k}+\frac N2\epsilon_kz^2e^{-i\beta_s}+O(\epsilon_k^2)(1+|z|)^3\bigg|^2 *\nonumber\\
&\,\,  (\delta_k\nabla(\log \mathfrak{h}_k)(\delta_kQ_s)(Q_l-Q_s)+\mu_l-\mu_s+E_{c,k})\bigg )/\mathrm{B}. \nonumber
\end{align}
where the expression of $\mathrm{B}$ is
\[\mathrm{B}=1+\frac{\mathfrak{h}_k(\delta_kQ_s^k)}{8}|z+\frac N2\epsilon_ke^{-i\beta_s}z^2+O(\epsilon_k^2(1+|z|)^3)|^2.\]
Here we point out the expression of $\phi_1$ is a combination of the $\mu_s^k-\mu_l^k$ outside $2A-A^2$ and the $(\mu_l^k-\mu_s^k)$ term in the expression of $A$ in (\ref{exp-A-2}).

For $A^2$ the leading term, which is the only term that matters in the computation later is 
\begin{equation}\label{exp-a-squ}
A^2=
\bigg (\frac{\mathfrak{h}_k(\delta_kQ_s)^2}{32}|z|^2 |\frac{p_s-p_l}{(N+1)\epsilon_k}|^2(1+\cos(2\theta-2\theta_{sl}-\beta_s))\bigg )/\mathrm{B}^2+\mathfrak{R}
\end{equation}
where $z=|z|e^{i\theta}$, $p_s-p_l=|p_s-p_l|e^{i\theta_{sl}}$ and $\mathfrak{R}$ represents the sum of other terms.
Using these expressions we can obtain (\ref{late-1}) by direct computation. 
Here $\phi_1$, $\phi_3$ correspond to solutions to the linearized operator. Here we note that if we set $\epsilon_{l,k}=e^{-\mu_l^k/2}$, there is no essential difference between $\epsilon_{l,k}$ and $\epsilon_k=e^{-\frac 12\mu_{1,k}}$ because $\epsilon_{l,k}=\epsilon_k(1+o(1))$. If $|\mu_{s,k}-\mu_{l,k}|/M_k\ge C$ we get a contradiction to $\tilde w_k=o(1)$ outside the bubble disks. Thus, we must have $|\mu_{s,k}-\mu_{l,k}|/M_k\to 0$. After simplification (see $\phi_3$ of (\ref{late-1})) we have
\begin{align}\label{c-12}
c_{1,s,l}=\lim_{k\to \infty}\frac{|p_s^k-p_l^k|}{2(N+1)M_k\epsilon_k}\cos(\beta_s+\theta_{sl}),\\
c_{2,s,l}=\lim_{k\to \infty}
\frac{|p_s^k-p_l^k|}{2(N+1)\epsilon_k M_k}\sin(\beta_s+\theta_{sl})\nonumber
\end{align}
It is also important to observe that even if $M_k=o(\epsilon_k)$ we still have $M_k\sim \max_{s}|p_s^k-p_l^k|/\epsilon_k$. Since each $|p_l^k|=E$, an upper bound for $M_k$ is 
\begin{equation}\label{upper-bound-Mk}
M_k\le C\mu_k\epsilon_k+C\delta_k^2\epsilon_k^{-1}\le C\mu_k\epsilon_k.
\end{equation}

Equation (\ref{c-12}) gives us a key observation: $|c_{1,s,l}|+|c_{2,s,l}|\sim |p_s^k-p_l^k|/(\epsilon_k M_k)$. So whenever $|c_{1,s,l}|+|c_{2,s,l}|\neq 0$ we have
$\frac{|p_s^k-p_l^k|}{\epsilon_k}\sim M_k$. In other words for each $l$, $M_{l,k}\sim \max_{t\neq l}\frac{|p_t^k-p_l^k|}{\epsilon_k}$.  Hence for any $t$, if $\frac{|p_t^k-p_l^k|}{\epsilon_k}\sim M_k$, let $M_{t,k}$ be the maximum of $|v_k-V_{t,k}|$, we have $M_{t,k}\sim M_k$. If all $\frac{|p_t^k-p_l^k|}{\epsilon_k}\sim M_k$ (\ref{M-comp}) is proved. So we prove that even if some $p_t^k$ is very close to $p_l^k$, $M_t^k$ is still comparable to $M_k$. Here is the reason, without loss of generality, $M_k=M_{1,k}$ and corresponding to $M_k$, there exist $p_{s}^k$ such that 
$$\frac{|p_{1}^k-p_{s}^k|}{\epsilon_k}\sim M_k.$$
For any given $p_t^k$, if $p_t^k$ is too close to $p_1^k$: $|p_t^k-p_{1}^k|<\frac 15|p_s^k-p_1^k|$, then $|p_t^k-p_s^k|\ge \frac 12 |p_s^k-p_1^k|$. 
Thus $\frac{|p_t^k-p_{s}^k|}{\epsilon_k}\sim M_k$ and $M_t^k\sim M_k$. (\ref{M-comp}) is established.  From now on for convenience we shall just use $M_k$. Since 
$M_k\sim \max_{s,t}|p_s^k-p_t^k|/\epsilon_k$, (\ref{upper-bound-Mk}) holds for $M_k$.

Now we set 
\begin{equation}\label{wlk-def}
w_{l,k}=(v_k-V_{l,k}).
\end{equation}
and 
$$\tilde w_{l,k}=w_{l,k}/M_k.$$

The equation of $w_{l,k}$ can be written as 
\begin{align}\label{wlk-bs}
   &\Delta w_{l,k}+|y|^{2N}\mathfrak{h}_k(\delta_k Q_l)e^{\xi_l}w_{l,k}\\
=&-\delta_k\nabla\mathfrak{h}_k(\delta_kQ_l)(y-Q_l)|y|^{2N}e^{V_{l,k}}-\delta_k^2
\sum_{|\alpha |=2}\frac{\partial^{\alpha}\mathfrak{h}_k(\delta_kQ_l)}{\alpha !}(y-Q_l)^{\alpha}|y|^{2N}e^{V_{l,k}}\nonumber\\
&+O(\delta_k^3)|y-Q_l|^3|y|^{2N}e^{V_{l,k}} \nonumber 
\end{align}
where we omitted $k$ in $Q_l$ and $\xi_l$. $\xi_l$ comes from the Mean Value Theorem and satisfies
\begin{equation}\label{around-ls}
e^{\xi_l}=e^{V_{l,k}}(1+\frac 12w_{l,k}+O(w_{l,k}^2)). 
\end{equation}
The function $\tilde w_{l,k}$ satisfies
\begin{equation}\label{aroud-s-1}
\lim_{k\to \infty}\tilde w_{l,k}(Q_s^k+\epsilon_k z)=\frac{c_{1,s,l}z_1+c_{2,s,l}z_2}{1+\frac 18 |z|^2}
\end{equation}
and around each $Q_s^k$ (\ref{Q-bad}) holds with $M_{s,k}$ replaced by $M_k$. The equation of $\tilde w_{l,k}$ is 
\begin{align}\label{t-wlk}
&\Delta \tilde w_{l,k}+|y|^{2N}\mathfrak{h}_k(\delta_kQ_l^k)e^{\xi_l^k}\tilde w_{l,k}\\
=&o(1)(y-Q_l^k)|y|^{2N}e^{V_{l,k}}+o(1)\sum_{|\alpha |=2}\frac{\partial^{\alpha}\mathfrak{h}_k(\delta_kQ_l^k)}{\alpha !}(y-Q_l^k)^{\alpha}|y|^{2N}e^{V_{l,k}}\nonumber \\
&+o(\delta_k)|y-Q_l^k|^3|y|^{2N}e^{V_{l,k}} \nonumber
\end{align}

\noindent{\bf Step four: Better estimate of $\tilde w_{l,k}$ away from local maximums.}
Now for $|y|\sim 1$, we use $w_{l,k}(Q_l^k)=0$ to write $w_{l,k}(y)$ as
\begin{align}\label{extra-2}
    w_{l,k}(y)&=\int_{\Omega_k}(G_k(y,\eta)-G_k(Q_l,\eta))\bigg (\mathfrak{h}_k(\delta_k Q_l)|\eta |^{2N}e^{\xi_l}w_{l,k}(\eta)\\
    &+\delta_k \nabla \mathfrak{h}_k(\delta_k Q_l)(\eta-Q_l)|\eta |^{2N}e^{V_{l,k}}\nonumber\\
    &+\delta_k^2
\sum_{|\alpha |=2}\frac{\partial^{\alpha}\mathfrak{h}_k(\delta_kQ_l)}{\alpha !}(\eta-Q_l)^{\alpha}|\eta |^{2N}e^{V_{l,k}}\bigg )
+o(\delta_k^{2}). \nonumber
\end{align}
Note that the 
the oscillation of $w_{l,k}$ on $\partial \Omega_k$ is $O(\delta_k^{N+1})$. The harmonic function defined by the boundary value of $w_{l,k}$ has an oscillation of  $O(\delta_k^{N+1})$ on $\partial \Omega_k$. The oscillation of this harmonic function in $B_R$ (for any fixed $R>1$) is $O(\delta_k^{N+2})$.
The regular part of the Green's function brings little error in the computation, indeed
\begin{align*}
&G_k(y,\eta)-G_k(Q_l^k,\eta)\\
=&\frac 1{2\pi}\log\frac{|Q_l-\eta |}{|y-\eta |}+\frac 1{2\pi}
\log \bigg |\frac{\frac{\eta }{|\eta |}-L_k^{-2}y|\eta |}{\frac{\eta }{|\eta |}-L_k^{-2}Q_l|\eta |}\bigg |.\\
=&\frac 1{2\pi}\log\frac{|Q_l-\eta |}{|y-\eta |}+O(\delta_k^2)|y||\eta |,\quad \mbox{for}\quad |y|\sim 1
\end{align*} 
where $L_k=\tau \delta_k^{-1}$. 
When we consider the integration in (\ref{extra-2}) from the last term, we have 
the order is $O(\delta_k^2)$, which is $o(\epsilon_k^2\mu_k)=o(\epsilon_k)$.  So we have 
 
 \begin{align}\label{green-wlk}
& \tilde w_{l,k}(y)
=-\frac{1}{2\pi}\int_{\Omega_k}\log \frac{|y-\eta |}{|Q_l^k-\eta |}\bigg ( \tilde w_{l,k}(\eta )\mathfrak{h}_k(\delta_k Q_l^k)|\eta |^{2N}e^{\xi_l}\\
& +\sigma_k\nabla \mathfrak{h}_k(\delta_k Q_l^k)(\eta-Q_l^k)
|\eta |^{2N}e^{V_{l,k}}\nonumber\\
&+\frac{\delta_k^2}{M_k}\sum_{|\alpha |=2}\frac{\partial^{\alpha}\mathfrak{h}_k(\delta_k Q_l^k)}{\alpha !}(\eta -Q_l^k)^{\alpha}|\eta |^{2N}e^{V_{l,k}}\bigg )d\eta
+o(\epsilon_k),\nonumber\\
&=\tilde H_{l,k}+o(\epsilon_k)\quad \mbox{for} \quad |y|\sim 1. \nonumber
\end{align}

Here we note that it is important to make the error $o(\epsilon_k)$. A larger error than $o(\epsilon_k)$ would cause major problems. Now we claim a better estimate of $\tilde w_{l,k}$ around $Q_l^k$: 
\begin{equation}\label{small-q}
|\tilde w_{l,k}(Q_l^k+\epsilon_ky)|=o(\epsilon_k)(1+|y|),\quad |y|\le \tau \epsilon_k^{-1}.
\end{equation}

\noindent{\bf Proof of (\ref{small-q}):} First we need 
 the following crude identity based on (\ref{late-1}):
\begin{align}\label{late-2-r}
\int_{B(Q_s^k,\tau)}(\tilde w_{l,k}(\eta)\mathfrak{h}_k(\delta_k Q_l^k)|\eta |^{2N}e^{V_{s,k}}
+\sigma_{k}\nabla \mathfrak{h}_k(\delta_k Q_l^k)(\eta -Q_l^k)|\eta |^{2N}e^{V_{l,k}})d\eta \nonumber\\
=O(\epsilon_k^{\delta}),\quad s=0,...,N,
\end{align}
for some $\delta\in (0,1)$.
When we compare the first term on the right hand side of (\ref{green-wlk}) and the first term of (\ref{late-2-r}), $e^{\xi_l}$ is replaced by $e^{V_{s,k}}$, this replacement is minor, as one can check from (\ref{upper-bound-Mk}) and (\ref{around-ls}) that 
\[ | e^{\xi_l(Q_s^k+\epsilon_kz)}-e^{V_{s,k}(Q_s^k+\epsilon_kz)}|
=o(\epsilon_k^{\delta})(1+|z|)^{-4},\quad |z|\le \tau_1 \epsilon_k^{-1}. \]
(\ref{late-1}) is mainly used in the evaluation of the first term. In order not to disturb the main stream of the proof, we put 
the proof of (\ref{late-2-r}) at the end of this step. The reason it is called a crude estimate is because its actual leading term is of the order $O(\epsilon_k)$, but it is sufficient to have $O(\epsilon_k^{\delta})$ for the proof of (\ref{small-q}). Next we observe from (\ref{green-wlk}) that $\tilde H_{l,k}$ is a harmonic function in $B(Q_l^k,\tau_1)$ and $\tilde H_{l,k}(Q_l^k)=0$. Now we evaluate $H_{l,k}$ on $|y-Q_l^k|=\tau_1$. It is easy to see that the integral outside $\cup_{s=0}^NB(Q_s^k,\tau_1)$ is $O(\epsilon_k^2)$. Next we see that the integral over $B(Q_l^k,\tau_1)$ is $o(\epsilon_k^{\delta})$ for some $\delta>0$ because for the first term and the second term we use (\ref{late-2-r}), for the third terms we have $|\eta -Q_l^k|^2$, which contributes $\epsilon_k^2$ after scaling. The integration over other disks gives
\[\sum_{s\neq l}\sigma_{s,k}\log \frac{|y-Q_s^k|}{|Q_l^k-Q_s^k|}\]
where $\sigma_{s,k}\to 0$ as $k\to \infty$. Since this is a harmonic function we have 
\[\tilde H_{l,k}(Q_l^k+\epsilon_kz)=O(\tilde \sigma_k)\epsilon_k(1+|z|),\quad |z|\le \tau_1 \epsilon_k^{-1},\]
for some $\tilde \sigma_k\to 0$. Thus we have (\ref{small-q}).

At the end of this step we prove (\ref{late-2-r}).

 Here we recall that $v_k$ is close to $V_{s,k}$ near $Q_s^k$ (see \ref{Q-bad})). That is why we shall use (\ref{late-1}). We state (\ref{late-2-r}) here:
\begin{align*}
\int_{B(Q_s^k,\tau)}(\tilde w_{l,k}(\eta)\mathfrak{h}_k(\delta_k Q_l^k)|\eta |^{2N}e^{V_{s,k}}
+\sigma_{k}\nabla \mathfrak{h}_k(\delta_k Q_l^k)(\eta -Q_l^k)|\eta |^{2N}e^{V_{l,k}})d\eta\\
=O(\epsilon_k^{\delta}) 
\end{align*}
for some $\delta\in (0,1)$.
Before the evaluation we recall the definition of $w_{l,k}$ in (\ref{wlk-def}) that around $Q_s^k$, 
$$\tilde w_{l,k}=\frac{v_k-V_{s,k}}{M_k}+\frac{V_{s,k}-V_{l,k}}{M_k}.$$
The first term after scaling at $Q_s^k$ is $O(\epsilon_k^{\delta})(1+|y|)^{\delta})$, so the leading term in the second term. So our goal in this section is to prove 
\begin{align}\label{crucial-8}
\int_{B(Q_s^k,\tau)}(\frac{V_{s,k}-V_{l,k}}{M_k}\mathfrak{h}_k(\delta_k Q_l^k)|\eta |^{2N}e^{V_{s,k}}
+\sigma_{k}\nabla \mathfrak{h}_k(\delta_k Q_l^k)(\eta -Q_l^k)|\eta |^{2N}e^{V_{l,k}})d\eta \nonumber\\
=O(\epsilon_k^{\delta}) 
\end{align}
Then we use (\ref{late-1}) in $(V_{s,k}-V_{l,k})/M_k$. 
Later we shall see that the terms of $\phi_1$ and $\phi_3$ lead to $o(\epsilon_k)$. We first look at the integration involving $\phi_2$: 
\begin{align}\label{phi-2-com}
 & \int_{B(Q_s^k,\tau)}\frac{\phi_2^k}{M_k}\mathfrak{h}_k(\delta_k Q_l^k)|\eta |^{2N}e^{V_{s,k}}
d\eta  \\
=&\frac{\mathfrak{h}_k(\delta_kQ_s^k))}4 \sigma_k\nabla \mathfrak{h}_k(\delta_kQ_s^k)(Q_l^k-Q_s^k)\int_{B(Q_s^k,\tau)}\frac{|z|^2}{(1+\frac{\mathfrak{h}_k(\delta_kQ_l^k)}8|z|^2)^3}dz \nonumber\\
=&8\pi \sigma_k\nabla (\log \mathfrak{h}_k(\delta_kQ_s^k)(Q_l^k-Q_s^k)\big (1+O(\epsilon_k^2\log 1/\epsilon_k)\big )\nonumber
\end{align}
We see that this term almost
cancels with the second term of (\ref{late-2-r}). The computation of $\phi_2$ is based on this equation:
\begin{equation}\label{phi-2-more}\int_{\mathbb R^2}\frac{\frac{\mathfrak{h}_k(\delta_k Q_s^k)}4\sigma_k\nabla \mathfrak{h}_k(\delta_k Q_s^k)(Q_l^k-Q_s^k)|z|^2}{(1+\frac{\mathfrak{h}_k(\delta_k Q_s^k)}8|z|^2)^3}dz
=8\pi \sigma_k\nabla (\log \mathfrak{h}_k)(\delta_k Q_s^k)(Q_l^k-Q_s^k), 
\end{equation}
and by (\ref{delta-small-1})
\begin{equation}\label{subtle-1}
\nabla \log \mathfrak{h}_k(\delta_kQ_l^k)-\nabla \log \mathfrak{h}_k(\delta_k Q_s^k)=O(\delta_k)=o(\epsilon_k\mu_k^{\frac 12}). 
\end{equation}

The integration involving $\phi_4$ provides the leading term. More detailed information is the following:
First for a global solution
$$V_{\mu,p}=\log \frac{e^{\mu}}{(1+\frac{e^{\mu}}{\lambda}|z^{N+1}-p|^2)^2}$$ of
$$\Delta V_{\mu,p}+\frac{8(N+1)^2}{\lambda}|z|^{2N}e^{V_{\mu,p}}=0,\quad \mbox{in }\quad \mathbb R^2, $$
by differentiation with respect to $\mu$ we have
$$\Delta(\partial_{\mu}V_{\mu,p})+\frac{8(N+1)^2}{\lambda}|z|^{2N}e^{V_{\mu,p}}\partial_{\mu}V_{\mu,p}=0,\quad \mbox{in}\quad \mathbb R^2. $$
By the expression of $V_{\mu,p}$ we see that 
$$\partial_r\bigg (\partial_{\mu}V_{\mu,p}\bigg )(x)=O(|x|^{-2N-3}).$$ 
Thus we have
\begin{equation}\label{inte-eq-1}
\int_{\mathbb R^2}\partial_{\mu}V_{\mu,p}|z|^{2N}e^{V_{\mu,p}}=\int_{\mathbb R^2}\frac{(1-\frac{e^{\mu}}{\lambda}|z^{N+1}-P|^2)|z|^{2N}}{(1+\frac{e^{\mu}}{\lambda}|z^{N+1}-P|^2)^3}dz=0.
\end{equation}

From $V_{\mu,p}$  we also have
$$\int_{\mathbb R^2}\partial_{P}V_{\mu,p}|y|^{2N}e^{V_{\mu,p}}=\int_{\mathbb R^2}\partial_{\bar P}V_{\mu,p}|y|^{2N}e^{V_{\mu,p}}=0, $$
which gives
\begin{equation}\label{inte-eq-2}
\int_{\mathbb R^2}\frac{\frac{e^{\mu}}{\lambda}(\bar z^{N+1}-\bar P)|z|^{2N}}{(1+\frac{e^{\mu}}{\lambda}|z^{N+1}-P|^2)^3}=\int_{\mathbb R^2}\frac{\frac{e^{\mu}}{\lambda}( z^{N+1}- P)|z|^{2N}}{(1+\frac{e^{\mu}}{\lambda}|z^{N+1}-P|^2)^3}=0.
\end{equation}

From (\ref{inte-eq-1}) and (\ref{inte-eq-2})  we use scaling and cancellation to have
\begin{equation}\label{small-phi-13}\int_{B(0,\tau\epsilon_k^{-1})}\frac{\phi_1}{M_k}B^{-2}=o(\epsilon_k),\quad
\int_{B(0,\tau\epsilon_k^{-1})}\frac{\phi_3}{M_k}B^{-2}=o(\epsilon_k).
\end{equation}
Thus (\ref{late-2-r}) holds.

\noindent{\bf Step five: Completion of the proof.}
We recall from (\ref{wlk-def}) that around $Q_s^k$
\begin{equation}\label{decom-s}\tilde w_{l,k}=\frac{v_k-V_{s,k}}{M_k}+\frac{V_{s,k}-V_{l,k}}{M_k}.
\end{equation}
We define this quantity without giving a precise estimate of it:
\begin{align*}
D_{s,l}^k:=&\int_{B(Q_s^k,\tau)}\bigg ( \tilde w_{l,k}(\eta )\mathfrak{h}_k(\delta_k Q_l^k)|\eta |^{2N}e^{\xi_l}
+\sigma_k\nabla \mathfrak{h}_k(\delta_k Q_l^k)(\eta-Q_l^k)
|\eta |^{2N}e^{V_{l,k}}\\
&+\frac{\delta_k^2}{M_k}\sum_{|\alpha |=2}
\frac{\partial^{\alpha}\mathfrak{h}_k(\delta_k Q_l^k)}{\alpha !}(\eta-Q_l^k)^{\alpha}|\eta |^{2N}e^{V_{l,k}}\bigg )d\eta.
\end{align*}
By (\ref{small-q}) we know $D_{s,l}^k=O(\epsilon_k^{\delta})$. Next we let
$$  H_{y,l}(\eta)=\frac{1}{2\pi}\log\frac{|y-\eta|}{|Q_l^k-\eta |}. $$
Then in (\ref{green-wlk}) we have
\begin{align*}
    &\tilde w_{l,k}(y)=-\sum_{s\neq l}H_{y,l}(Q_s)D_{s,l}^k \\
   & -\sum_{s\neq l}\int_{B(Q_s,\tau)}\big (\partial_1H_{y,l}(Q_s)(\eta_1-Q_s^1)+\partial_2H_{y,l}(Q_s)(\eta_2-Q_s^2)\big )
    \cdot \mathfrak{h}_k(\delta_kQ_l)|\eta |^{2N}e^{\xi_l}\tilde w_{l,k}(\eta)\\
    &+o(\epsilon_k).
\end{align*} 

After evaluation we have
\begin{align*}
    \tilde w_{l,k}(y)&=-\frac{1}{2\pi}\sum_{s\neq l}\log\frac{|y-Q_s^k|}{|Q_l^k-Q_s^k|}D_{s,l}^k
    +\sum_{s\neq l}\bigg (8(\frac{y_1-Q_s^1}{|y-Q_s|^2}-\frac{Q_l^1-Q_s^1}{|Q_l-Q_s|^2})c_{1,s,l}\\
   &+8(\frac{y_2-Q_s^2}{|y-Q_s|^2}-\frac{Q_l^2-Q_s^2}{|Q_s-Q_l|^2})c_{2,s,l}\bigg )\epsilon_k+o(\epsilon_k).
\end{align*}
where we used
$$\int_{\mathbb R^2}\frac{z_1^2}{(1+\frac 18|z|^2)^3}dz=\int_{\mathbb R^2}\frac{z_2^2}{(1+\frac 18|z|^2)^3}dz=16\pi. $$
Recall that $c_{1,s,l}$ and $c_{2,s,l}$ are defined in (\ref{c-12}).

For $|y|\sim 1$ but away from the $N+1$ bubbling disks, we have, for $l\neq s$, 
$$v_k(y)=V_{l,k}(y)+M_k\tilde w_{l,k}(y) $$
and
$$v_k(y)=V_{s,k}(y)+M_k\tilde w_{s,k}(y). $$
Thus for $s\neq l$ we have
\begin{equation}\label{compare-10}
\frac{V_{s,k}(y)-V_{l,k}(y)}{M_k}=\tilde w_{l,k}(y)-\tilde w_{s,k}(y). \end{equation}
In (\ref{late-1}) we consider $|z|\sim \epsilon_k^{-1}$, then we see that if 
\[|\frac{\mu_l^k-\mu_s^k}{M_k}+2\sigma_k\nabla \log \mathfrak{h}_k(\delta_kQ_s^k)(Q_l^k-Q_s^k)|\ge C\epsilon_k,\]
for a large $C$,
 it is easy to see that  (\ref{compare-10}) does not hold. So we have 
\[|\frac{\mu_l^k-\mu_s^k}{M_k}+2\sigma_k\nabla \log \mathfrak{h}_k(\delta_kQ_s^k)(Q_l^k-Q_s^k)|=O(\epsilon_k),\]
and we focus on the 
leading term $\phi_3$, which gives, for $|y|\sim 1$ away from bubbling disks, 
\[\frac{V_{s,k}(y)-V_{l,k}(y)}{M_k}
=D_{\mu}^k\epsilon_k+4Re(\frac{y^{N+1}-1}{|y^{N+1}-1|^2}\frac{\bar p_s-\bar p_l}{M_k\epsilon_k}) \epsilon_k+O(|y-Q_s^k|^2)\epsilon_k+o(\epsilon_k), \]
where 
\[D_{\mu}^k=\frac{\mu_l^k-\mu_s^k}{M_k\epsilon_k}+2\frac{\sigma_k}{\epsilon_k}\nabla \log \mathfrak{h}_k(\delta_kQ_s^k)(Q_l^k-Q_s^k).\]
On the other hand, for $y\in B_5\setminus (\cup_{t=1}^NB(Q_t^k,\tau_1))$, 
\begin{align*}
 &\tilde w_{l,k}(y)-\tilde w_{s,k}(y)\\
 =&-\frac{1}{2\pi}\sum_{m,m\neq l}\log \frac{|y-Q_m|}{|Q_l-Q_m|}D_{m,l}^k\\
 &+8\epsilon_k\sum_{m,m\neq l}\bigg ((\frac{y_1-Q_m^1}{|y-Q_m|^2}-\frac{Q_l^1-Q_m^1}{|Q_l-Q_m|^2})\frac{|p_m-p_l|}{2(N+1)M_k\epsilon_k}\cos(\beta_m+\theta_{ml})\\
 &+(\frac{y_2-Q_m^2}{|y-Q_m|^2}-\frac{Q_l^2-Q_m^2}{|Q_l-Q_m|^2})\frac{|p_m-p_l|}{2(N+1)M_k\epsilon_k}\sin (\beta_m+\theta_{ml})\bigg )\\
 &+\frac{1}{2\pi}\sum_{m,m\neq s}\log \frac{|y-Q_m|}{|Q_s-Q_m|}D_{m,s}^k\\
 &-8\epsilon_k\sum_{m,m\neq s}\bigg ((\frac{y_1-Q_m^1}{|y-Q_m|^2}-\frac{Q_s^1-Q_m^1}{|Q_s-Q_m|^2})\frac{|p_m-p_s|}{2(N+1)M_k\epsilon_k}\cos(\beta_m+\theta_{ms})\\
 &+(\frac{y_2-Q_m^2}{|y-Q_m|^2}-\frac{Q_s^2-Q_m^2}{|Q_s-Q_m|^2})\frac{|p_m-p_s|}{2(N+1)M_k\epsilon_k}\sin (\beta_m+\theta_{ms})\bigg )
\end{align*}
for all $l\neq s$. If we fix a set of $l,s$ that corresponds to the largest $|D_{s,l}^k|$ and we consider $y$ close to $Q_s^k$. If we use $y=e^{i\beta_s}+z$ by abusing the notation $z$, then we have
\[y^{N+1}=(e^{i\beta_s}(1+ze^{-i\beta_s}))^{N+1}=1+(N+1)ze^{-i\beta_s}+O(|z|^2).\]
Therefore
\begin{align*}
&4Re(\frac{y^{N+1}-1}{|y^{N+1}-1|^2}\frac{\bar p_s-\bar p_l}{M_k\epsilon_k}\\
=&\frac{4|p_s-p_l|}{(N+1)|z|^2M_k\epsilon_k}\bigg (z_1\cos(\beta_s+\beta_{sl})+z_2\sin(\beta_s+\beta_{sl})+O(|z|^2)\bigg ).
\end{align*}
In the expression of $\tilde w_{l,k}(y)-\tilde w_{s,k}(y)$, we identify the leading term, which is 
\begin{align*}
8\epsilon_k\bigg ((\frac{y_1-Q_s^1}{|y-Q_s|^2}-\frac{Q_l^1-Q_s^1}{|Q_l-Q_s|^2})\frac{|p_s-p_l|}{2(N+1)M_k\epsilon_k}\cos(\beta_s+\theta_{sl})\\
+(\frac{y_2-Q_s^2}{|y-Q_s|^2}-\frac{Q_l^2-Q_s^2}{|Q_l-Q_s|^2})\frac{|p_s-p_l|}{2(N+1)M_k\epsilon_k}\sin(\beta_s+\theta_{sl})\bigg )\\
-\frac{1}{2\pi}\log \frac{|y-Q_s|}{|Q_l-Q_s|}D_{s,l}^k.
\end{align*}
If we use $y=e^{i\beta_s}+z$ for $|z|$ small and replace $Q_s$ by $e^{i\beta_s}$ because their difference is $o(\epsilon_k)$. Then the expression above has this leading term:
\[\frac{4|p_s-p_l|}{(N+1)|z|^2M_k\epsilon_k}\bigg (z_1\cos(\beta_s+\beta_{sl})+z_2\sin(\beta_s+\beta_{sl})\bigg )-\frac 1{2\pi}\log \frac{|z|}{| e^{i\beta_l}-e^{i\beta_s}|}D_{s,l}^k.\]
Thus we obtain $D_{s,l}^k/\epsilon_k=o(1)$. Therefore 
\begin{equation}\label{dsl0}
D_{s,l}^k=o(\epsilon_k),\quad \forall s\neq l.
\end{equation}
With this updated information we write $\tilde w_{l,k}(y)-\tilde w_{s,k}(y)$ as
\begin{align*}
 &\tilde w_{l,k}(y)-\tilde w_{s,k}(y)\\
 &=8\epsilon_k\sum_{m,m\neq l}\bigg ((\frac{y_1-\cos \beta_m}{|y-e^{i\beta_m}|^2}-\frac{\cos \beta_l-\cos \beta_m}{|e^{i\beta_l}-e^{i\beta_m}|^2})\frac{|p_m-p_l|}{2(N+1)M_k\epsilon_k}\cos(\beta_m+\theta_{ml})\\
 &+(\frac{y_2-\sin \beta_m}{|y-e^{i\beta_m}|^2}-\frac{\sin\beta_l-\sin \beta_m}{|e^{i\beta_l}-e^{i\beta_m}|^2})\frac{|p_m-p_l|}{2(N+1)M_k\epsilon_k}\sin (\beta_m+\theta_{ml})\bigg )\\
 &-8\epsilon_k\sum_{m,m\neq s}\bigg ((\frac{y_1-\cos \beta_m}{|y-e^{i\beta_m}|^2}-\frac{\cos \beta_s-\cos \beta_m}{|e^{i\beta_s}-e^{i\beta_m}|^2})\frac{|p_m-p_s|}{2(N+1)M_k\epsilon_k}\cos(\beta_m+\theta_{ms})\\
 &+(\frac{y_2-\sin \beta_m}{|y-e^{i\beta_m}|^2}-\frac{\sin \beta_s-\sin \beta_m}{|e^{i\beta_s}-e^{i\beta_m}|^2})\frac{|p_m-p_s|}{2(N+1)M_k\epsilon_k}\sin (\beta_m+\theta_{ms})\bigg )+o(\epsilon_k)
\end{align*}
for $|y|\in B_5\setminus (\cup_{l=1}^N B(Q_l^k,\tau_1))$. 

\medskip

Now in particular we take $l=0$ and we use the following notations: $\tilde w_k$, $V_k$, $c_{1,s}$, $c_{2,s}$, $\theta_s$, instead of $\tilde w_{0}^k$, $v_{0,k}$, $c_{1,s,0}$, $c_{2,s,0}$, $\theta_{s,0}$. 

The expression of $\tilde w_k$ (see (\ref{green-wlk}) for example) gives
\begin{align*}
&\nabla \tilde w_k(y)\\
=&\int_{\Omega_k} \nabla_y G(y,\eta)\bigg (\mathfrak{h}_k(\delta_ke_1)|\eta |^{2N}e^{\xi_k}\tilde w_k(\eta)
+\sigma_k\nabla\mathfrak{h}_k(\delta_ke_1)(\eta-e_1)|\eta |^{2N}e^{V_k(\eta)}\\
&+\frac{\delta_k^2}{M_k}\sum_{|\alpha |=2}\frac{\partial^{\alpha}\mathfrak{h}_k(\delta_ke_1)(\eta -e_1)^{\alpha}}{\alpha !}|\eta |^2{2N}e^{V_k(\eta)}\bigg )d\eta
+o(\epsilon_k), 
\end{align*}
for $y\in B_5\setminus (\cup_{s=1}^N B(Q_s^k,\tau_1))$.
Now we take $y=e_1$, we have 
\begin{align*}
&0=\nabla \tilde w_k(e_1)\\
=&\int_{\Omega_k} (-\frac{1}{2\pi})\frac{e_1-\eta}{|e_1-\eta |^2}\bigg (\mathfrak{h}_k(\delta_ke_1)|\eta |^{2N}e^{\xi_k}\tilde w_k(\eta)
+\sigma_k\nabla\mathfrak{h}_k(\delta_ke_1)(\eta-e_1)|\eta |^{2N}e^{V_k(\eta)}\\
&+\frac{\delta_k^2}{M_k}\sum_{|\alpha |=2}\frac{\partial^{\alpha}\mathfrak{h}_k(\delta_ke_1)(\eta -e_1)^{\alpha}}{\alpha !}|\eta |^2{2N}e^{V_k(\eta)}\bigg )d\eta
+o(\epsilon_k), 
\end{align*}
Obviously we will integral each of the two components in $B(Q_s^k,\tau_1)$ for $\tau_1>0$ small. Then we observe from (\ref{dsl0}) 
that
\begin{align*}\int_{B(Q_l,\tau_1)}\bigg (\mathfrak{h}_k(\delta_ke_1)|\eta |^{2N}e^{\xi_k}\tilde w_k(\eta)
+\sigma_k\nabla\mathfrak{h}_k(\delta_ke_1)(\eta-e_1)|\eta |^{2N}e^{V_k(\eta)}\\
+\frac{\delta_k^2}{M_k}\sum_{|\alpha |=2}\frac{\partial^{\alpha}\mathfrak{h}_k(\delta_ke_1)(\eta -e_1)^{\alpha}}{\alpha !}|\eta |^2{2N}e^{V_k(\eta)}\bigg )d\eta=o(\epsilon_k).\end{align*}
Based on this we use the following format: If $f$ is a smooth function,
\begin{align*}
&\int_{B(Q_s,\tau_1)} f(\eta) \bigg (\mathfrak{h}_k(\delta_ke_1)|\eta |^{2N}e^{\xi_k}\tilde w_k(\eta)
+\sigma_k\nabla\mathfrak{h}_k(\delta_ke_1)(\eta-e_1)|\eta |^{2N}e^{V_k(\eta)}\\
&\qquad +\frac{\delta_k^2}{M_k}\sum_{|\alpha |=2}\frac{\partial^{\alpha}\mathfrak{h}_k(\delta_ke_1)(\eta -e_1)^{\alpha}}{\alpha !}|\eta |^2{2N}e^{V_k(\eta)}\bigg )d\eta\\
&=\partial_1f(e^{i\beta_s})c_{1s}\cdot 16\pi \epsilon_k+\partial_1 f(e^{i\beta_s})c_{2s}\cdot 16\pi \epsilon_k +o(\epsilon_k). 
\end{align*}
Then we replace $f(\eta_1,\eta_2)$ by \[f_1(\eta_1,\eta_2)=(-\frac{1}{2\pi})\frac{1-\eta_1}{(1-\eta_1)^2+\eta_2^2}\] and 
\[f_2(\eta_1,\eta_2)=\frac{1}{2\pi}\frac{\eta_2}{(1-\eta_1)^2+\eta_2^2}.\]
Then we have, from the expressions of $c_{1,s}$, $c_{2,s}$ in (\ref{c-12}), that 
\begin{align*}
0&=\partial_1\tilde w_k(e_1)\\
&=16\pi\epsilon_k\sum_{s=1}^N\bigg (\frac{\cos(\beta_s+\theta_s)\cos \beta_s+\sin \beta_s\sin (\beta_s+\theta_s)}{4\pi(1-\cos \beta_s)}\frac{|p_s|}{2(N+1)M_k\epsilon_k}\bigg )+o(\epsilon_k)\\
&=4\epsilon_k\sum_{s=1}^N\frac{\cos \theta_s}{1-\cos \beta_s}\frac{|p_s|}{2(N+1)M_k\epsilon_k} +o(\epsilon_k). 
\end{align*}
Similarly
\begin{align*}
0&=\partial_2\tilde w_k(e_1)\\
&=16\pi\epsilon_k\sum_{s=1}^N\bigg (\frac{\cos(\beta_s+\theta_s)\sin \beta_s-\cos \beta_s\sin (\beta_s+\theta_s)}{4\pi(1-\cos \beta_s)}\frac{|p_s|}{2(N+1)M_k\epsilon_k}\bigg )+o(\epsilon_k)\\
&=-4\epsilon_k\sum_{s=1}^N\frac{\sin \theta_s}{1-\cos \beta_s}\frac{|p_s|}{2(N+1)M_k\epsilon_k} +o(\epsilon_k). 
\end{align*}
If we use $a_s$ to denote
\[a_s:=\lim_{k\to \infty}\frac{|p_s^k|}{(1-cos\beta_s)M_k\epsilon_k},\quad s=1,...,N,\]
then we have
\begin{equation}\label{eq-ar-1}
\sum_{s=1}^Na_s\cos \theta_s=0
\end{equation}
\begin{equation}\label{eq-ar-2}
\sum_{s=1}^Na_s\sin \theta_s=0,
\end{equation}
where $a_s\ge 0$ for all $s$. Taking the sum of the squares of (\ref{eq-ar-1}) and (\ref{eq-ar-2}) we obtain 
\[\sum_{s=1}^Na_s^2+\sum_{s<t}2a_sa_t\cos(\theta_s-\theta_t)=0. \]
Since $\sum_sa_s^2\ge 2\sum_{s<t}a_sa_t$, we have
\[\sum_{s<t}2a_sa_t(1+\cos(\theta_s-\theta_t))\le 0.\]
Since each term on the left is obviously non-negative, we know each 
\[a_sa_t(1+\cos (\theta_s-\theta_t))=0,\quad \forall s<t. \] 
If there is only one $a_s>0$, it is easy to see that (\ref{eq-ar-1}) and (\ref{eq-ar-2}) cannot both hold. If there are three $a_t's>0$, it is also elementary to see this is not possible: say $a_1,a_2,a_3>0$, then they have to be equal. Then we see that we must have 
\[\theta_1-\theta_2=\pm \pi,\quad \theta_2-\theta_3=\pm \pi,\quad \theta_1-\theta_3=\pm \pi .\] 
Obviously these three equations cannot hold at the same time. So the only situation left is there are exactly two $a_t's$ positive. All other $a_ts$ are zero. Since $p_0=0$, this means there are exactly two $p_{s_1}^k$, $p_{s_2}^k$ such that 
\begin{equation}\label{p-ex-1}\lim_{k\to \infty}\frac{p_{s_1}^k}{\epsilon_kM_k}=-\lim_{k\to \infty}\frac{p_{s_2}^k}{\epsilon_k M_k}\neq 0,\quad \lim_{k\to \infty}\frac{p_t^k}{\epsilon_kM_k}=0,\quad \forall t\neq s_1,s_2.
\end{equation}

If we apply the same argument to $\tilde w_l^k$. Then from $\nabla \tilde w_l^k(Q_l^k)=0$ we would get exactly $p_{l_1}^k$ and $p_{l_2}^k$ different from $p_l^k$ and 
\[\lim_{k\to \infty} \frac{p_{l_1}^k-p_l^k}{\epsilon_k M_k}=-\lim_{k\to \infty}\frac{p_{l_2}^k-p_l^k}{\epsilon_k M_k}\neq 0,\quad \lim_{k\to \infty}\frac{p_t^k-p_l^k}{\epsilon_k M_k}=0, \forall t\neq l_1,l_2.\]
Then it is easy to see that this is only possible when we have $N=2$ because if $N=1$, we would have just one $a_s\neq 0$, which is not possible based on (\ref{eq-ar-1}) and (\ref{eq-ar-2}). If $N\ge 3$, we have to have $p_t^k$ that satisfies 
\[\lim_{k\to \infty}\frac{|p_t^k|}{\epsilon_k M_k}=0,\quad \mbox{and}\quad \lim_{k\to \infty} \frac{p_t^k-p_{s_1}^k}{\epsilon_kM_k}=0\]
which is a contradiction to (\ref{p-ex-1}). 

Finally we rule out the case $N=2$. In this case we have $p_0^k=0$, 
\begin{equation}\label{p-ex-2}\lim_{k\to \infty}\frac{p_1^k}{\epsilon_kM_k}=-\lim_{k\to \infty}\frac{p_2^k}{\epsilon_kM_k}\neq 0.
\end{equation}
However from $\tilde w_1^k(p_1^k)=0$ we have 
\[\lim_{k\to \infty}\frac{p_2^k-p_1^k}{\epsilon_kM_k}=-\lim_{k\to \infty}\frac{0-p_1^k}{\epsilon_kM_k}\neq 0,\]
which is a contradiction to (\ref{p-ex-2}).
Lemma \ref{small-other} is established. $\Box$ 

\medskip

Proposition \ref{key-w8-8} is an immediate consequence of Lemma \ref{small-other}.  $\Box$.

\medskip

Now we finish the proof of Theorem \ref{vanish-first-h}.

Let $\hat w_k=w_k/\tilde \delta_k$. (Recall that $\tilde \delta_k=\delta_k |\nabla \mathfrak{h}_k(0)|+\delta_k^2$). If $|\nabla \mathfrak{h}_k(0)|/\delta_k\to \infty$, we see that in this case 
$\tilde \delta_k\sim \delta_k|\nabla \mathfrak{h}_k(0)|$. The equation of $\hat w_k$ is
\begin{equation}\label{hat-w}
\Delta \hat w_k+|y|^{2N}e^{\xi_k}\hat w_k=a_k\cdot (e_1-y)|y|^{2N}e^{V_k}+b_ke^{V_k}|y-e_1|^{2}|y|^{2N},
\end{equation}
in $\Omega_k$, where $a_k=\delta_k\nabla \mathfrak{h}_k(0)/\tilde \delta_k$, $b_k=o(1)$. 
By Proposition \ref{key-w8-8}, $|\hat w_k(y)|\le C$. Before we carry out the remaining part of the proof we observe that $\hat w_k$ converges to a harmonic function in $\mathbb R^2$ minus finite singular points. Since $\hat w_k$ is bounded, all these singularities are removable. Thus $\hat w_k$ converges to a constant. Based on the information around $e_1$, we claim this constant is $0$. To do this we use the notation $W_k$ again and use Proposition \ref{key-w8-8} to rewrite the equation for $W_k$.
Let
$$W_k(z)=\hat w_k(e_1+\epsilon_k z), \quad |z|< \delta_0 \epsilon_k^{-1} $$
for $\delta_0>0$ small. Then from Proposition \ref{key-w8-8} we have
\begin{equation}\label{h-exp}
\mathfrak{h}_k(\delta_ky)=\mathfrak{h}_k(\delta_k e_1)+\delta_k \nabla \mathfrak{h}_k(\delta_k e_1)(y-e_1)+O(\delta_k^2)|y-e_1|^2,
\end{equation}
\begin{equation}\label{y-1}
|y|^{2N}=|e_1+\epsilon_k z|^{2N}=1+O(\epsilon_k)|z|,
\end{equation}
\begin{equation}\label{v-radial}
V_k(e_1+\epsilon_k z)+2\log \epsilon_k=U_k(z)+O(\epsilon_k)|z|+O(\epsilon_k^2)(\log (1+|z|))^2
\end{equation}
and
\begin{equation}\label{xi-radial}
\xi_k(e_1+\epsilon_k z)+2\log \epsilon_k=U_k(z)+O(\epsilon_k)(1+|z|).
\end{equation}
Using (\ref{h-exp}),(\ref{y-1}),(\ref{v-radial}) and (\ref{xi-radial}) in (\ref{hat-w}) we write the equation of $W_k$ as
\begin{equation}\label{W-rough}
\Delta W_k+\mathfrak{h}_k(\delta_k e_1)e^{U_k(z)}W_k=-\epsilon_k a_k\cdot ze^{U_k(z)}+E_w, \quad 0<|z|<\delta_0 \epsilon_k^{-1}
\end{equation}
where
\begin{equation}\label{ew-rough-2}
E_w(z)=O(\epsilon_k)(1+|z|)^{-3}, \quad |z|<\delta_0 \epsilon_k^{-1}.
\end{equation}

Since $\hat w_k$ obviously converges to a global harmonic function with removable singularity, we have $\hat w_k\to \bar c$ for some $\bar c\in \mathbb R$. Then we claim that
\begin{equation}\label{bar-c-0} \bar c=0.
\end{equation}

The proof of (\ref{bar-c-0}) is the similar as before: If $\bar c\neq 0$, we use $W_k(z)=\bar c+o(1)$ on $B(0,\delta_0 \epsilon_k^{-1})\setminus B(0, \frac 12\delta_0 \epsilon_k^{-1})$ and consider the projection of $W_k$ on $1$:
 $$g_0(r)=\frac 1{2\pi}\int_{0}^{2\pi}W_k(re^{i\theta})d\theta. $$
If we use $F_0$ to denote the projection to $1$ of the right hand side we have, 
$$g_0''(r)+\frac 1r g_0'(r)+\mathfrak{h}_k(\delta_ke_1)e^{U_k(r)}g_0(r)=F_0,\quad 0<r<\delta_0 \epsilon_k^{-1} $$
where
$$F_0(r)=O(\epsilon_k^{\epsilon})(1+|z|)^{-3}. $$
In addition we also have
$$\lim_{k\to \infty} g_0(\delta_0 \epsilon_k^{-1})=\bar c+o(1). $$
For simplicity we omit $k$ in some notations. By the same argument as in Lemma \ref{w-around-e1},  we have
$$g_0(r)=O(\epsilon_k^{\epsilon})\log (2+r),\quad 0<r<\delta_0 \epsilon_k^{-1}. $$
Thus $\bar c=0$.

\medskip

Based on Lemma \ref{bar-c-0} and standard Harnack inequality for elliptic equations we have
\begin{equation}\label{small-til-w}
\tilde w_k(x)=o(1),\,\,\nabla \tilde w_k(x)=o(1),\,\, x\in B_3\setminus (\cup_{l=1}^N(B(e^{i\beta_l},\delta_0)\setminus B(e^{i\beta_l}, \delta_0/8))).
\end{equation}
Equation (\ref{small-til-w}) is equivalent to $w_k=o(\tilde \delta_k)$ and $\nabla w_k=o(\tilde \delta_k)$ in the same region.

\medskip

In the next step we consider the difference between two Pohozaev identities.
 For $\Omega_{0,k}=B(e_1^k,r)$ ($r>0$ small) we consider the Pohozaev identity around $e_1$. For $v_k$ we have
\begin{align}\label{pi-vk}
\int_{\Omega_{0,k}}\partial_{\xi}(|y|^{2N}\mathfrak{h}_k(\delta_ky))e^{v_k}-\int_{\partial \Omega_{0,k}}e^{v_k}|y|^{2N}\mathfrak{h}_k(\delta_ky)(\xi\cdot \nu)\\
=\int_{\partial \Omega_{0,k}}(\partial_{\nu}v_k\partial_{\xi}v_k-\frac 12|\nabla v_k|^2(\xi\cdot \nu))dS. \nonumber
\end{align}
where $\xi$ is an arbitrary unit vector. Correspondingly the Pohozaev identity for $V_k$ is

\begin{align}\label{pi-Vk}
\int_{\Omega_{0,k}}\partial_{\xi}(|y|^{2N}\mathfrak{h}_k(\delta_ke_1))e^{V_k}-\int_{\partial \Omega_{0,k}}e^{V_k}|y|^{2N}\mathfrak{h}_k(\delta_k e_1)(\xi\cdot \nu)\\
=\int_{\partial \Omega_{0,k}}(\partial_{\nu}V_k\partial_{\xi}V_k-\frac 12|\nabla V_k|^2(\xi\cdot \nu))dS. \nonumber
\end{align}
Using $w_k=v_k-V_k$ and $|w_k(y)|=o(\tilde \delta_k)$ on $\partial \Omega_{0,k}$ we have

\[\int_{\partial \Omega_{0,k}}(\partial_{\nu}v_k\partial_{\xi}v_k-\frac 12|\nabla v_k|^2(\xi\cdot \nu))
-\int_{\partial \Omega_{0,k}}(\partial_{\nu}V_k\partial_{\xi}V_k-\frac 12|\nabla V_k|^2(\xi\cdot \nu))
=o(\tilde \delta_k).\]
The difference between the second term of (\ref{pi-vk}) and the second term of (\ref{pi-Vk}) is minor: If we use the expansion of $v_k=V_k+w_k$ and that of $\mathfrak{h}_k(\delta_ky)$ around $e_1$, it is easy to obtain
$$\int_{\partial \Omega_{0,k}}e^{v_k}|y|^{2N}\mathfrak{h}_k(\delta_ky)(\xi\cdot \nu)-\int_{\partial \Omega_{0,k}}e^{V_k}|y|^{2N}\mathfrak{h}_k(\delta_ke_1)(\xi\cdot \nu)=o(\tilde \delta_k). $$
To evaluate the first term,  we use the following updated estimate of $w_k$:
\[ |w_k(e_1+\epsilon_kz)|\le C\tilde \delta_k\epsilon_k1(1+|z|),\quad |z|\le r\epsilon_k^{-1}.\]
Using this expression and 
\begin{align}\label{imp-1}
&\partial_{\xi}(|y|^{2N}\mathfrak{h}_k(\delta_ky))e^{v_k}\\
=&\partial_{\xi}(|y|^{2N}\mathfrak{h}_k(\delta_ke_1)+|y|^{2N}\delta_k\nabla \mathfrak{h}_k(\delta_ke_1)(y-e_1)+O(\delta_k^2))e^{V_k}(1+w_k+O(\delta_k^2))\nonumber\\
=&\partial_{\xi}(|y|^{2N})\mathfrak{h}_k(\delta_k e_1)e^{V_k}+\delta_k\partial_{\xi}(|y|^{2N}\nabla \mathfrak{h}_k(\delta_ke_1)(y-e_1))e^{V_k}\nonumber\\
&+\partial_{\xi}(|y|^{2N}\mathfrak{h}_k(\delta_ke_1))e^{V_k}w_k+O(\delta_k^2)e^{V_k},\nonumber
\end{align}
 we have
\begin{equation}\label{extra-1}
\int_{\Omega_{0,k}}\partial_{\xi}(|y|^{2N})\mathfrak{h}_k(\delta_ke_1)e^{V_k}w_k=o(\tilde \delta_k).
\end{equation}

For the second term on the right hand side of (\ref{imp-1}), we have
\begin{align}\label{imp-2}
&\int_{\Omega_{0,k}}\delta_k\partial_{\xi}(|y|^{2N}\nabla \mathfrak{h}_k(\delta_ke_1)(y-e_1))e^{V_k}\\
=&2N\delta_k\int_{\Omega_{0,k}}y_{\xi}|y|^{2N-2}\nabla \mathfrak{h}_k(\delta_ke_1)(y-e_1)e^{V_k}+
\delta_k\int_{\Omega_{0,k}}|y|^{2N}\partial_{\xi}\mathfrak{h}_k(\delta_ke_1)e^{V_k} \nonumber \\
=&
\delta_k\partial_{\xi}(\log \mathfrak{h}_k)(\delta_ke_1)(8\pi+O(\mu_k\epsilon_k^2))+o(\tilde \delta_k).\nonumber
\end{align}
Using (\ref{extra-1}) and (\ref{imp-2}) in the difference between (\ref{pi-vk}) and (\ref{pi-Vk}), we have
$$\delta_k \partial_{\xi}\mathfrak{h}_k(\delta_ke_1)(1+O(\mu_k\epsilon_k^2))=o(\tilde \delta_k). $$
Thus $|\nabla \mathfrak{h}_k(\delta_ke_1)|=O(\delta_k)$ if $\delta_k=o(\epsilon_k\mu_k^{\frac 12})$. When $\delta_k\ge C\epsilon_k\mu_k^{\frac 12}$ we obtain from \cite{wei-zhang-adv} $|\nabla \mathfrak{h}_k(0)|=O(\delta_k)$. Theorem \ref{vanish-first-h} is established.  $\Box$

\section{ Proof of Theorem \ref{main-thm-1}}

The key estimate is the following more refined estimate of $\nabla \mathfrak{h}_k(\delta_kQ_s^k)$.
\begin{prop}\label{final-prop}
\begin{equation}\label{1-order}
|\nabla \mathfrak{h}_k(\delta_kQ_s^k)|=o(\delta_k),\quad s=0,...,N.
\end{equation}
\end{prop}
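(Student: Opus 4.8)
\medskip
\noindent{\bf Proof of Proposition \ref{final-prop} (plan).}

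The plan is to upgrade the first order bound $\nabla\mathfrak{h}_k(\delta_kQ_s^k)=O(\delta_k)$ --- a consequence of Theorem \ref{vanish-first-h} and the uniform $C^{N+2}$ bound on $\mathfrak{h}_k$ --- to $o(\delta_k)$ by replaying the Pohozaev identity comparison from the end of the proof of Theorem \ref{vanish-first-h}, now centered at each local maximum $Q_s^k$ and fed the decisive new input that, once Theorem \ref{vanish-first-h} holds, the approximation scale becomes small:
\[\tilde{\delta_k}=|\nabla\mathfrak{h}_k(0)|\delta_k+\delta_k^2=O(\delta_k^2).\]
As in Section 4, I would carry out the argument under (\ref{delta-small-1}), $\delta_k=o(\epsilon_k\mu_k^{1/2})$ --- the regime in which the global bubbles $V_{l,k}$ of Sections 3--4 approximate $v_k$ near $Q_l^k$; the complementary regime $\delta_k\ge C\epsilon_k\mu_k^{1/2}$ is analogous and in fact simpler, the $N+1$ bubbles there being well separated, so I would only sketch it.

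First I would check that the estimates of Sections 3--4 are available with $e_1=Q_0^k$ replaced by an arbitrary $Q_s^k$ and $w_k$ replaced by $w_{s,k}=v_k-V_{s,k}$ (the $V_{l,k},w_{l,k}$ of Lemma \ref{small-other}), and that they do not require the contradiction hypothesis (\ref{assum-delta-h}): indeed Proposition \ref{key-w8-8} only uses the consequence $M_k/\delta_k^2\to\infty$ of the blowup hypothesis and yields $|w_{s,k}|\le C\tilde{\delta_k}$ on $\Omega_k$, while the normalization $\hat w_{s,k}=w_{s,k}/\tilde{\delta_k}$ satisfies an equation of type (\ref{hat-w}) with bounded coefficients ($|\delta_k\nabla\mathfrak{h}_k(\delta_kQ_s^k)/\tilde{\delta_k}|=O(1)$ and $b_k=o(1)$). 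The argument behind (\ref{bar-c-0}) then forces $\hat w_{s,k}\to0$, and the linearized-operator analysis behind (\ref{small-q}) sharpens this to
\[|w_{s,k}(Q_s^k+\epsilon_kz)|\le C\tilde{\delta_k}\,\epsilon_k(1+|z|),\qquad |z|\le r\epsilon_k^{-1},\]
together with $w_{s,k}=o(\tilde{\delta_k})$, $\nabla w_{s,k}=o(\tilde{\delta_k})$ on $\partial B(Q_s^k,r)$ for small fixed $r>0$, in the spirit of (\ref{small-til-w}).

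Next I would subtract, over $\Omega_{s,k}:=B(Q_s^k,r)$, the Pohozaev identity for $V_{s,k}$ from that for $v_k$, following (\ref{pi-vk})--(\ref{imp-2}) verbatim. By the bounds just recalled the boundary contributions --- the quadratic gradient term and $\int_{\partial\Omega_{s,k}}e^{v_k}|y|^{2N}\mathfrak{h}_k(\delta_ky)(\xi\cdot\nu)$ --- agree with their $V_{s,k}$ counterparts up to $o(\tilde{\delta_k})$; the correction $\int_{\Omega_{s,k}}\partial_\xi(|y|^{2N})\mathfrak{h}_k(\delta_kQ_s^k)e^{V_{s,k}}w_{s,k}$ is $O(\tilde{\delta_k}\epsilon_k)=o(\tilde{\delta_k})$ thanks to the refined pointwise bound (cf. (\ref{extra-1})); the remaining volume error terms are $O(\delta_k^2\epsilon_k)=o(\tilde{\delta_k})$; and the only genuinely new contribution is, as in (\ref{imp-2}),
\[\delta_k\,\partial_\xi(\log\mathfrak{h}_k)(\delta_kQ_s^k)\,\big(8\pi+O(\mu_k\epsilon_k^2)\big).\]
Collecting these,
\[\delta_k\,\partial_\xi\mathfrak{h}_k(\delta_kQ_s^k)\,\big(1+O(\mu_k\epsilon_k^2)\big)=o(\tilde{\delta_k})=o(\delta_k^2),\]
and since $\xi$ is an arbitrary unit vector, dividing by $\delta_k$ gives exactly $|\nabla\mathfrak{h}_k(\delta_kQ_s^k)|=o(\delta_k)$; in the regime $\delta_k\ge C\epsilon_k\mu_k^{1/2}$ the same Pohozaev identity localized at a separated bubble, together with $\tilde{\delta_k}\le C\delta_k^2$ (valid in both regimes by Theorem \ref{vanish-first-h}), gives the same conclusion.

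The hard part, I expect, is the first step: confirming that the entire chain of pointwise estimates of Sections 3--4, originally organized around the contradiction hypothesis (\ref{assum-delta-h}), survives intact with $\tilde{\delta_k}$ only $O(\delta_k^2)$ --- in particular that the limiting constant in (\ref{bar-c-0}) still vanishes and that the $\epsilon_k(1+|z|)$-rate for $w_{s,k}$ near each $Q_s^k$ persists. Concretely this requires controlling the mutual interaction of the $N+1$ bubbles with an error strictly below $\delta_k^2$, which is precisely the accuracy at which the Pohozaev identity reads off $\nabla\mathfrak{h}_k(\delta_kQ_s^k)$; securing a clean $o(\tilde{\delta_k})$, rather than merely $O(\tilde{\delta_k})$, on every error term is exactly what converts the earlier $O(\delta_k)$ into $o(\delta_k)$.
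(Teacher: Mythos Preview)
Your proposal is correct and follows essentially the paper's own route: re-run the Section 4 machinery (now without the contradiction hypothesis (\ref{assum-delta-h}), since Theorem \ref{vanish-first-h} already gives $\tilde\delta_k=O(\delta_k^2)$) to obtain $|w_{s,k}|\le C\delta_k^2$, verify via the radial ODE projection that $\hat w_{s,k}\to 0$, and then read off $\nabla\mathfrak{h}_k(\delta_kQ_s^k)=o(\delta_k)$ from the Pohozaev comparison localized at each $Q_s^k$. The one refinement worth noting is the paper's case split: rather than (\ref{delta-small-1}) it splits on (\ref{key-assump-1}), i.e., on whether $\epsilon_k^{-1}|Q_l^k-e^{i\beta_l}|\to 0$ (equivalently, after Theorem \ref{vanish-first-h} and (\ref{distance}), on whether $\delta_k=o(\epsilon_k^{1/2})$, a much weaker threshold than $\epsilon_k\mu_k^{1/2}$), and in the complementary regime---where $\delta_k\ge C\epsilon_k^{1/2}$ forces $M_k\ge C$---it bypasses the contradiction framework altogether, using a direct Green's-representation together with the standard single-bubble energy quantization $A_l^k=O(\epsilon_k^2\log\epsilon_k^{-1})=o(\delta_k^2)$ to obtain $w_k=o(\delta_k^2)$ away from the bubbles.
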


\noindent{\bf Proof of Proposition \ref{final-prop}:}

Recall that $V_k$ satisfies
$$\Delta V_k+\mathfrak{h}_k(\delta_ke_1)|y|^{2N}e^{V_k}=0\quad \mbox{in}\quad \mathbb R^2,\quad \int_{\mathbb R^2}|y|^{2N}e^{V_k}<\infty.$$
and 
$$w_k=v_k-V_k,\quad w_k(e_1)=|\nabla w_k(e_1)|=0.$$
The key estimate we establish is 
\begin{equation}\label{best-bound}
|w_k(y)|\le C\delta_k^2,\quad y\in B(0,\tau\epsilon_k^{-1}).
\end{equation}
In order to prove (\ref{best-bound}) we shall consider two cases. Either  $\epsilon_k^{-1}|Q_l^k-e^{i\beta_l}|\to 0$ for all $l$, or there exists $l$ that satisfies $\epsilon_k^{-1}|Q_l^k-e^{i\beta_l}|\ge C$. 
Note that by (\ref{distance}) and Theorem \ref{vanish-first-h}, if $\delta_k=o(\epsilon_k^{1/2})$,
(\ref{location-Q}) holds. If (\ref{location-Q}) is violated, $\delta_k\ge C\epsilon_k^{1/2}$. 

We first prove (\ref{best-bound}) under the assumption 
\begin{equation}\label{key-assump-1}
\epsilon_k^{-1}|Q_l^k-e^{i\beta_l}|\to 0\quad \mbox{ for all}\quad l.
\end{equation}
If (\ref{best-bound}) does not hold, let $M_k=\max|w_k|$ and suppose $M_k/\delta_k^2\to \infty$. Then we set 
$$\hat w_k=w_k/M_k, $$
the equation of $\hat w_k$ is 
$$\Delta \hat w_k(y)+|y|^{2N}\mathfrak{h}_k(\delta_ke_1)e^{\xi_k}\hat w_k(y)=a_k\cdot (e_1-y)|y|^{2N}e^{V_k}+\hat E_1$$
where $a_k=\delta_k\nabla \mathfrak{h}_k(0)/M_k\to 0$ and $\hat E_k=o(1)|y-e_1|^2|y|^{2N}e^{V_k}$. 

Then, as before, we prove that $\hat w_k=o(1)$ outside bubbling disks.
The proof of this part is as before, first we prove $\hat w_k=o(1)$ near $e_1$. Then we use Harnack inequality to pass through the smallness away from singular sources. 

Then we get a contradiction just as the proof of Proposition \ref{vanish-first-h}, the part before the evaluation of the Pohozaev identity, which essentially says that $v_k$ cannot be very similar to a global solution at $N+1$ local maximums. (\ref{best-bound}) is established under the assumption (\ref{key-assump-1}).

Still under (\ref{key-assump-1}) we use the notation $\hat w_k$ and let it be defined as
$$\hat w_k(y)=w_k(y)/\delta_k^2.$$
Then the equation of $\hat w_k$ is 
\begin{align}\label{hat-wk}
&\Delta \hat w_k(y)+|y|^{2N}\mathfrak{h}_k(\delta_ke_1)e^{\xi_k}\hat w_k(y)\\
=&a_k\cdot (e_1-y)|y|^{2N}e^{V_k}+\sum_{|\alpha |=2}\frac{\partial^{\alpha}\mathfrak{h}_k(\delta_ke_1)}{\alpha!}(e_1-y)^{\alpha}|y|^{2N}e^{V_k}\nonumber\\
&+O(\delta_k)|y-e_1|^3|y|^{2N}e^{V_k}.\nonumber
\end{align}
where $|a_k|=O(1)$ by (\ref{vanish-first-h}). $\Delta \hat w_k \to 0$ away from finite points and $\hat w_k$ is bounded, which means the singularities are removable. Thus $\hat w_k\to c$ in $\mathbb R^2$ minus a few points. By looking at the ODE projected to $1$ we see that $c=0$.

Because of the smallness of $w_k$ ($w_k=O(\delta_k^2))$ we can compare the Pohozaev identities of $v_k$ and $V_k$ and obtain $\nabla \mathfrak{h}_k(\delta_ke_1)=o(\delta_k)$ as before. Similarly $\nabla \mathfrak{h}_k(\delta_kQ_l^k)=o(\delta_k)$ can be obtained similarly. 
Proposition \ref{final-prop} is established under the assumption (\ref{key-assump-1}).

\medskip

Now we prove (\ref{best-bound}) if (\ref{key-assump-1}) does not hold: There exists $s$ such that 
\[\epsilon_k^{-1}|Q_s^k-e^{i\beta_s}|\ge C.\]
In this case necessarily we have $\delta_k>C\epsilon_k^{1/2}$ and, based on standard result for Liouville equation (say, Theorem 1.1 of \cite{gluck}), $M_k\ge C$.
In this case we set $V_k$ to be the solution that agrees with $v_k$ at $e_1$ and let $w_k=v_k-V_k$.

 Since $M_k\ge C$, we just focus on $w_k$ itself. Using $ w_k(e_1)=0$ we have 
 \begin{align*}
 & w_k(y)\\
 =&\int_{\Omega_k}(G_k(y,\eta)-G_k(e_1,\eta))|\eta |^{2N}(\mathfrak{h}_k(\delta_k\eta)e^{v_k}-\mathfrak{h}_k(\delta_ke_1)e^{V_k})d\eta\\
=&-\frac{1}{2\pi}\int_{\Omega_k}\log \frac{|y-\eta |}{|e_1-\eta|} |\eta |^{2N}(\mathfrak{h}_k(\delta_k\eta)e^{v_k}-\mathfrak{h}_k(\delta_ke_1)e^{V_k})d\eta +o(\delta_k^2)
 \end{align*}
 Here we recall 
 \[\epsilon_k\le C\max_{l}|Q_l^k-e^{i\beta_l}|\le C\delta_k^2. \]
 When we evaluate the integral in the expression of $\tilde w_k$ above, we see immediately that we only need to evaluate each $B(Q_l^k,\tau_2)$ for some $\tau_2>0$ small. Thus we have
 \begin{align}\label{far-1}
 &\tilde w_k(y)=\int_{\Omega_k}L(y,\eta)A(\eta)d\eta+o(\delta_k^2).\\
 &=\sum_{l=1}^N\int_{B(Q_l^k,\tau_3)}(L(y,\eta)-L(y,Q_l^k))A(\eta)d\eta+L(y,Q_l^k)A_l^k+o(\delta_k^2). \nonumber 
 \end{align}
 where 
 \[L(y,\eta)=-\frac{1}{2\pi}\log \frac{|y-\eta |}{|e_1-\eta |},\] 
 \[A(\eta)=|\eta |^{2N}(\mathfrak{h}_k(\delta_k\eta)e^{v_k}-\mathfrak{h}_k(\delta_ke_1)e^{V_k}).\]
 and 
\[A_l^k=\int_{B(Q_l^k,\tau_3)}A(\eta)d\eta.\]
 To evaluate $w_k(y)$ we first observe that 
\begin{equation}\label{Alk}\int_{B(Q_l^k,\tau)}(L(y,\eta)-L(y,Q_l^k))A(\eta)=o(\epsilon_k)=o(\delta_k^2)
\end{equation}
 The reason is $y-Q_l^k$ becomes $\epsilon_kz$ after the following change of variable: $y=Q_l^k+\epsilon_kz$. Then we look at (\ref{late-1}) and use the cancellation in $\phi_1$. Then it is not hard to see that the result is $o(\epsilon_k)$. For $A_l^k$ we use the standard result for Liouville equation (see \cite{chenlin1,zhangcmp,gluck}) to have 
 \begin{equation}\label{Alk-1}
 A_l^k=O(\epsilon_k^2\log \frac{1}{\epsilon_k})=o(\delta_k^2),\quad l=1,..,N.
 \end{equation}

As a consequence $\tilde w_k=o(\delta_k)$ away from the singular sources. Around $e_1$, 
\[|\tilde w_k(e_1+\epsilon_kz)|\le o(\delta_k\epsilon_k))(1+|y|),\quad |y|\le \tau_1\epsilon_k^{-1}.\]
By comparing the Pohozaev identities of $V_k$ and $v_k$ around $B(e_1,\tau_1)$, we have $|\nabla \mathfrak{h}_k(\delta_ke_1)|=o(\delta_k)$. Applying the same argument around each $Q_s^k$ we have
\[|\nabla \mathfrak{h}_k(\delta_kQ_s^k)|=o(\delta_k),\quad s=0,..,N.\] 
Proposition \ref{final-prop} is established in all cases. $\Box$

\medskip 

Theorem \ref{main-thm-1} immediately follows from Proposition \ref{final-prop}: For $N=1$, $\partial_{1}\mathfrak{h}_k(\delta_1 e_1)$ and $\partial_1\mathfrak{h}_k(\delta_k(-e_1))$ are both $o(\delta_k)$ implies $\partial_{11}\mathfrak{h}_k(0)=o(1)$. The fact that $\partial_2\mathfrak{h}_k(\delta_k e_1)$ and $\partial_2 \mathfrak{h}_k(\delta_k(-e_1))$ being both $o(\delta_k)$ implies that $\partial_{12}\mathfrak{h}_k(0)=o(1)$. Finally by $\Delta \mathfrak{h}_k(0)=o(1)$ proved in \cite{wei-zhang-jems} we have $\partial_{22}\mathfrak{h}_k(0)=o(1)$. When $N\ge 2$ we evaluate 
\[\nabla \mathfrak{h}_k(\delta_kQ_l^k)-\nabla\mathfrak{h}_k(\delta_ke_1)=o(\delta_k)\]
for $l\neq 0$. Then it is easy to prove $\partial_{ij}\mathfrak{h}_k(0)=o(1)$ for all $i,j=1,2.$
Theorem \ref{main-thm-1} is established. $\Box$

\section{Higher order vanishing theorem.}

 The key estimate in this section is 
\begin{prop}\label{1st-d} For $N\ge 2$ we have 
\begin{equation}\label{crucial-third}
\nabla \mathfrak{h}_k(\delta_k Q_l^k)=o(\delta_k^2),\quad \forall l=0,1,...,N
\end{equation}
\end{prop}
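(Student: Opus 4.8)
The plan is to repeat the loop of Theorem~\ref{vanish-first-h} and Proposition~\ref{final-prop} — crude pointwise bound for $w_k=v_k-V_k$, a contradiction/scaling argument, $\hat w_k\to0$ near $e_1$ by the ODE projection, Harnack, a configuration analysis near the other $Q_s^k$, and finally a Pohozaev comparison — run now at the next order, i.e. at scale $\delta_k^3$ rather than $\delta_k^2$, using as input everything already proved: $\nabla\mathfrak{h}_k(\delta_kQ_l^k)=o(\delta_k)$ for every $l$ (Proposition~\ref{final-prop}) and $D^2\mathfrak{h}_k(0)=o(1)$ (Theorem~\ref{main-thm-1}). I argue by contradiction: set $\sigma_k:=\max_l|\nabla\mathfrak{h}_k(\delta_kQ_l^k)|$, so $\sigma_k=o(\delta_k)$, suppose $\sigma_k/\delta_k^2\to\infty$, and, passing to a subsequence and rotating, assume the maximum is attained at $l=0$. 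The source of the $w_k$-equation is $(\mathfrak{h}_k(\delta_ke_1)-\mathfrak{h}_k(\delta_ky))|y|^{2N}e^{V_k}$; Taylor expanding around $\delta_ke_1$, its linear part is $O(\delta_k\sigma_k)$, its cubic remainder is $O(\delta_k^3)$, but its quadratic (Hessian) part is only $o(\delta_k^2)$, which is \emph{larger} than $\delta_k\sigma_k+\delta_k^3$ because $\sigma_k=o(\delta_k)$. Thus, in contrast with Proposition~\ref{final-prop}, the Hessian term must be separated out before the scaling argument can even be set up.

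To separate it I would subtract the explicit second-order correction $\psi_k$, the solution of
\[
\Delta\psi_k+\mathfrak{h}_k(\delta_ke_1)|y|^{2N}e^{V_k}\psi_k=-\tfrac{\delta_k^2}{2}\,(y-e_1)^{\top}D^2\mathfrak{h}_k(\delta_ke_1)(y-e_1)\,|y|^{2N}e^{V_k},\qquad\psi_k(e_1)=|\nabla\psi_k(e_1)|=0,
\]
which is precisely the profile already appearing (with coefficient $O(1)$) in the equations \eqref{hat-wk}, \eqref{wlk-bs}, \eqref{t-wlk}. Solvability and normalization are not obstructions: the quadratic source is orthogonal to the translation kernel $\phi_1,\phi_2$ by parity, while its radial/$\phi_0$-component is handled by the projection-onto-$1$ ODE argument of Lemma~\ref{w-around-e1}; one obtains $\psi_k=o(\delta_k^2)$ away from $e_1$ with the sharp scaled decay $|\psi_k(e_1+\epsilon_kz)|\le o(\delta_k^2)\,\epsilon_k(1+|z|)$ near $e_1$ via the reasoning of Lemma~\ref{t-w-1-better}. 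Then $w_k-\psi_k$ still vanishes to second order at $e_1$, and $\hat w_k:=(w_k-\psi_k)/(\delta_k\sigma_k+\delta_k^3)$ satisfies an equation whose source is $O(1)$ times a uniformly integrable profile — the cross terms $(\mathfrak{h}_k(\delta_ky)e^{\xi_k}-\mathfrak{h}_k(\delta_ke_1)e^{V_k})\psi_k$ being absorbed using the crude bound $|w_k|=o(\delta_k^2)$. Now the machinery of Proposition~\ref{final-prop}/Lemma~\ref{small-other} applies almost verbatim: $\hat w_k$ tends to a bounded harmonic function off finitely many removable singularities, hence to a constant, which is forced to be $0$ near $e_1$ by the projection argument as in \eqref{bar-c-0}; Harnack propagates $\hat w_k=o(1),\ \nabla\hat w_k=o(1)$ to $B_3$ minus the bubbling disks; near each $Q_s^k$, $\hat w_k\to c_{1,s}\phi_1+c_{2,s}\phi_2$, and matching this limit against the inter-bubble differences of global solutions (the later steps of the proof of Lemma~\ref{small-other}) produces $\sum_sa_s\cos\theta_s=\sum_sa_s\sin\theta_s=0$ with $a_s\ge0$, which together with $\max|\hat w_k|=1$ is contradictory once the degenerate one-, two- and $\ge3$-bubble configurations are excluded. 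It is here that $N\ge2$ is genuinely needed: the exclusion, at this order, relies on having at least the three bubble points $Q_0^k,Q_1^k,Q_2^k$, whereas for $N=1$ only the single coefficient $a_1$ is available and no contradiction of the required form is produced. This gives $\sigma_k=O(\delta_k^2)$, and then the same argument with the scale lowered to $\delta_k^3$ yields the sharp bound $|w_k-\psi_k|\le C\delta_k^3$ in $B(0,\tau\epsilon_k^{-1})$, together with $|(w_k-\psi_k)(e_1+\epsilon_kz)|\le C\delta_k^3\,\epsilon_k(1+|z|)$ near $e_1$.

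With these sharpened estimates I would close by comparing the Pohozaev identities of $v_k$ and $V_k$ over $B(e_1,r)$ exactly as at the end of the proof of Theorem~\ref{vanish-first-h}. The boundary terms, the $(w_k-\psi_k)$-contribution, the $\psi_k$-contribution and the higher Taylor terms all reduce to $o(\delta_k^3)$: the $(w_k-\psi_k)$-term because the near-$e_1$ estimate forces $\int e^{V_k}(w_k-\psi_k)=o(\delta_k^3)$; the $\psi_k$-term because the Hessian source is even while the extracted Pohozaev functional is odd, so its leading part cancels and the remainder is of order $o(\delta_k^2)(\mu_k\epsilon_k^2+\delta_k^2)$; and the quadratic Taylor term of $\mathfrak{h}_k$ enters the source integral only through the small mass $\int(y-e_1)^2|y|^{2N}e^{V_k}=O(\epsilon_k^2\log\tfrac1{\epsilon_k})$. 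Hence the identity collapses to $\delta_k\,\partial_\xi\mathfrak{h}_k(\delta_ke_1)\,(1+o(1))=o(\delta_k^3)$, i.e. $\nabla\mathfrak{h}_k(\delta_ke_1)=o(\delta_k^2)$; the same computation centered at each $Q_s^k$ gives $\nabla\mathfrak{h}_k(\delta_kQ_s^k)=o(\delta_k^2)$ for all $s$. As in Proposition~\ref{final-prop}, the regime where $\epsilon_k$ is not small compared with a power of $\delta_k$ is dealt with separately using \eqref{Qm-close}, which forces $\delta_k\ge C\epsilon_k^{1/2}$, hence $\epsilon_k\le C\delta_k^2$ — the same case split already performed for \eqref{best-bound}.

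The step I expect to be the real obstacle is the construction and control of the correction $\psi_k$, and in particular proving that its contribution to the Pohozaev comparison is genuinely $o(\delta_k^3)$ and not merely $o(\delta_k^2)$: this requires both the parity cancellation of the Hessian source against the almost-radial bubble and sharp (Lemma~\ref{t-w-1-better}-type) estimates on $\psi_k$ near and far from $e_1$, and it is essentially the only place where the argument departs from a mechanical upgrade of Proposition~\ref{final-prop}.
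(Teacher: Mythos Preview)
Your plan could work, but you are missing the observation that makes the correction $\psi_k$ unnecessary and that is precisely where $N\ge2$ enters. From Proposition~\ref{final-prop} you have $|\nabla\mathfrak{h}_k(\delta_kQ_l^k)|\le\sigma_k$ at the $N+1$ points $Q_0^k,\dots,Q_N^k$ lying (essentially) on the circle of radius $\delta_k$. Taylor expand $\partial_i\mathfrak{h}_k$ around $0$ and subtract the $l=0$ relation from the $l$-th:
\[
\delta_k\sum_j\partial_{ij}\mathfrak{h}_k(0)\big((Q_l^k)_j-(Q_0^k)_j\big)=O(\sigma_k)+O(\delta_k^2).
\]
For $N\ge2$ the vectors $Q_1^k-Q_0^k$ and $Q_2^k-Q_0^k$ are linearly independent (their limits $e^{i\beta_1}-1,\ e^{i\beta_2}-1$ are), so every entry of $D^2\mathfrak{h}_k(0)$ is determined and
\[
\delta_k^2\,|D^2\mathfrak{h}_k(0)|\le C\big(\delta_k\sigma_k+\delta_k^3\big).
\]
This is the Vandermonde mechanism of Lemma~\ref{lemma-wei}, and it is exactly what the paper invokes. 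Under the contradiction hypothesis $M_k/(\delta_k\sigma_k+\delta_k^3)\to\infty$ the quadratic Taylor term of $\mathfrak{h}_k$ is therefore $o(M_k)$, not merely $o(\delta_k^2)$; it is absorbed in the $\hat w_k$-equation with no subtraction, all coefficients stay bounded, and the argument of Proposition~\ref{final-prop} repeats verbatim. The Pohozaev comparison then needs nothing beyond what was already done there.

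This also corrects your attribution of the role of $N\ge2$. The configuration exclusion at the end of Lemma~\ref{small-other} works for every $N\ge1$ --- the case $N=1$ is dispatched there explicitly, since a single $a_1>0$ cannot satisfy both \eqref{eq-ar-1} and \eqref{eq-ar-2}. What genuinely fails for $N=1$ is the interpolation above: two antipodal points $\pm e_1$ recover only $\partial_{i1}\mathfrak{h}_k(0)$, not $\partial_{22}\mathfrak{h}_k(0)$, so the Hessian bound is unavailable. Your $\psi_k$-route, with its delicate parity cancellation in the Pohozaev step, is thus an unnecessary detour for $N\ge2$, and the ``real obstacle'' you flagged simply does not arise.
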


\noindent{\bf Proof of Proposition \ref{1st-d}:} 

 Here we consider two cases: 
\begin{enumerate}
    \item $\max_{l} \epsilon_k^{-1}|Q_l^k-e^{i\beta_l}|\ge C$. Note that in this case based on (\ref{distance}) and (\ref{1-order}) we have $\delta_k\ge C\epsilon_k^{\frac 12}$.
    \item $\epsilon_k^{-1}|Q_l^k-e^{i\beta_l}|=o(1)$ for all $l$. One sufficient condition based on (\ref{distance}) is $\delta_k\le C\epsilon_k^{1/2}$. 
\end{enumerate}
For the first case we prove (\ref{crucial-third}).  Observe that in this this case we have $M_k\ge C$ (so we only work on $w_k$).  Using the argument as in the proof of Theorem \ref{main-thm-1}  we have 
\[w_k(y)=\sum_{l=1}^NL_1(y,\eta)A_l^k+o(\delta_k^3),\quad y\in B_5\setminus \{B(Q_k^l,\tau_2)\cup....\cup B(Q_N^k,\tau_2)).\]
where $A_l^k$ is defined as in (\ref{Alk}) and 
\[L_1(y,\eta)=G_k(y,\eta)-G_k(e_1,\eta),\]
Then using (\ref{Alk-1}) we have
\[w_k(y)=O(\epsilon_k^2\log \frac{1}{\epsilon_k})=o(\delta_k^3), \quad y\in B_5\setminus (\cup_{l=1}^NB(Q_l^k,\tau_2)).\]
Then the proof of $|\nabla \mathfrak{h}_k(\delta_ke_1)|=o(\delta_k^2)$ is carried out as before. When it comes to $l\neq 0$, we can replace $V_k$ by $V_l^k$ and we still have $v_k-V_l^k=o(\delta_k^3)$ in the neighborhood of $\partial B(Q_l^k,\tau_1)$. Thus by evaluating the Pohozaev identities we obtain (\ref{crucial-third}). 

Now under the second possibility, we now prove (\ref{crucial-third}).

The method of approximating blowup solutions using different global solutions at each local maximum can be used. 
We set $w_k=v_k-V_k$ with common local maximum at $e_1$. Then we set 
 \[M_k:=\max_{x\in \bar \Omega_k}|w_k(x)|\]
 Our goal is to show that 
\begin{equation}\label{for-third}
M_k\le C(\delta_k\sum_{l=0}^N|\nabla \mathfrak{h}_k(\delta_k Q_l^k)|+\delta_k^3)
\end{equation}
Here we note that we are using a different argument as in the previous section. 
 Based on what we have done for the Hession vanishing theorem we have already known $M_k=O(\delta_k^2)$.

By way of contradiction we assume that 
\[M_k/(\delta_k\sum_{l=0}^N|\nabla \mathfrak{h}_k(\delta_kQ_l^k)|+\delta_k^3)\to \infty.\]
Then we let $\hat w_k=w_k/M_k$ with the obvious implication $|\hat w_k|\le 1$. Then the equation of $\hat w_k$ is
\begin{equation}\label{3rd-w}
\Delta \hat w_k+\mathfrak{h}_k(\delta_ky)|y|^{2N}e^{\xi_k}\hat w_k=
\frac{\mathfrak{h}_k(\delta_ke_1)-\mathfrak{h}_k(\delta_ky)}{M_k}|y|^{2N}e^{V_k},
\end{equation}
in $\Omega_k:=B(0,\tau \delta_k^{-1})$.
The estimate of $\xi_k$ is as before
\[e^{\xi_k(x)}=e^{V_k}(1+\frac 12 w_k+O(w_k^2))\]
Let 
\[a_l^k=(\mathfrak{h}_k(\delta_ke_1)-\mathfrak{h}_k(\delta_k e^{i\beta_l}))/(\mathfrak{h}_k(\delta_k e^{i\beta_l})M_k).\]
Since 
\[\delta_k\nabla \mathfrak{h}_k(\delta_kQ_l^k)/M_k=o(1), \,\, \forall i, \]
and $\delta_k^3/M_k\to 0$,
the proof of Lemma \ref{lemma-wei} below gives
$\delta_k^2\partial_{ij}\mathfrak{h}_k(0)/M_k=O(1)$ for all $N\ge 2$.

Consequently all $a_l$ are bounded and we can carry out the Fourier analysis around $e_1$, we now observe that all the coefficients for $\tilde w_k$ in the expansion are $O(1)$. 
Then (\ref{for-third}) can be deduced by the same argument as in the proof of Theorem \ref{main-thm-1}. Consequently it is easy to prove that $w_k=o(\delta_k^3)+o(\delta_k\sum_{l}|\nabla \mathfrak{h}_k(\delta_kQ_l)|)$ away from the bubbling disks. 
Since $\mathfrak{h}_k\in C^{N+2}$, obviously the equation above yields
\begin{equation}\label{for-2nd-1}
\partial_{\xi}\mathfrak{h}_k(\delta_k Q_l^k)=O(\delta_k\epsilon_k\sum_l|\nabla \mathfrak{h}_k(\delta_kQ_l^k)|)+o(\delta_k^2).
\end{equation}
Thus  (\ref{crucial-third}) is obtained and  
Proposition \ref{1st-d} is established.. $\Box$

\medskip

Now we complete the proof of Theorem \ref{main-thm-2}.

From $|\nabla \mathfrak{h}_k(\delta_k Q_l^k)|=o(\delta_k^2)$ we have
\[|Q_l^k-e^{i\beta_l}|\le s_k \delta_k^3+C\mu_k e^{-\mu_k}\]
where $s_k\to 0+$. So if 
\[\delta_k\le s_k^{-\frac 16}\epsilon_k^{1/3}, \]
we have 
\[|Q_l^k-e^{i\beta_l}|\epsilon_k^{-1}=o(1).\]
On the other hand if 
\[|Q_l^k-e^{i\beta_l}|\ge C\epsilon_k\]
for at least one $l$, we have 
\[\delta_k\ge C s_k^{-\frac 16}\epsilon_k^{1/3}.\]
In this case $\delta_k^5/(\mu_ke^{-\mu_k})\to \infty$. There we can use the same argument to prove  
\begin{equation}\label{unif-bound-1}
\max_{x\in \Omega_k}|w_k|\le C(\delta_k^5+\delta_k\sum_{l=0}^N|\nabla \mathfrak{h}_k(\delta_kQ_l^k)|).
\end{equation}
In fact in the proof if we set 
$M_k=\max_{\Omega_k} |w_k|$, then the function $\hat w_k=w_k/M_k$ has coefficient functions satisfying
$\delta_k\nabla \mathfrak{h}_k(\delta_kQ_l^k)/M_k=o(1)$ and $\delta_k^5/M_k=o(1)$.  Then by the proof of Lemma \ref{lemma-wei} below, for $N\ge 8$, we have 
\[\frac{\nabla^{\alpha}\mathfrak{h}_k(0)\delta_k^{|\alpha |}}{M_k}=o(1),\quad \mbox{for}\quad |\alpha |=2,3,4.\]
Thus by looking at the expansion of $\hat w_k$ around $e_1$ we can still obtain the smallness of $\hat w_k$ as before. Then using the same argument from evaluating Pohozaev identities we obtain 
\[|\nabla \mathfrak{h}_k(\delta_kQ_l^k)|=o(\delta_k^4),\quad l=0,...,N.\]
By Lemma \ref{lemma-wei} below we have, for $N\ge 8$,
\begin{equation}\label{step-1}
|\nabla^{\alpha}\mathfrak{h}_k(\delta_kQ_l^k)|=o(\delta_k^{5-|\alpha |}),\quad 1\le |\alpha |\le 5.
\end{equation}

For $N\ge 8$ with the new rate of $|\nabla \mathfrak{h}_k(\delta_k Q_l^k)|$, we now have 
\[|Q_l^k-e^{i\beta_l}|=o(\delta_k^5),\quad l=0,...,N.\]
Using the same reasoning, we put the analysis in two cases: Either $\delta_k\le S_k\epsilon_k^{\frac 15}$ for some $S_k\to \infty$, or the compliment. As a result, we prove 
\[\max_{x\in \bar \Omega_k}|w_k|\le C(\delta_k^9+\delta_k\sum_l |\nabla \mathfrak{h}_k(\delta_kQ_l^k)|),\]
which further gives $|\nabla \mathfrak{h}_k(\delta_ke_1)|\le o(\delta_k^8)$ and 
\[|\nabla \mathfrak{h}_k(\delta_k Q_l^k)|=o(\delta_k^8),\quad l=0,...,N.\]
Consequently by Lemma \ref{lemma-wei}, for $N\ge 16$,
we have 
\begin{equation}\label{more-van-1}
|\nabla^{\alpha}\mathfrak{h}_k(0)|=o(\delta_k^{9-|\alpha |}),\quad 1\le |\alpha |\le 9.
\end{equation}
In general for $N\ge 2^{M+1}$ we have 
\[|\nabla^{\alpha} \mathfrak{h}_k(0)|=O(\delta_k^{2^M+1-|\alpha|}), \quad 1\le |\alpha|\le 2^M+1.\]

Finally we present this useful lemma:

\begin{lem}\label{lemma-wei}
Suppose \[|\nabla h_k(\delta_k  e^{i\beta_l})|=o(\delta_k^{N_1}),\quad l=0,...,N, \quad \beta_l=\frac{2\pi l}{N+1},\]
where $N_1$ is a positive integer. Then for $N\ge 2N_1$, we have
\[|\nabla^{\alpha}  h_k(0) | =o(\delta_k^{N_1+1-|\alpha |}), \quad \forall 1\le |\alpha |\le N_1+1. \] 
\end{lem}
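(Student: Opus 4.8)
The plan is to read the statement as a sampling fact --- gradient data of size $o(\delta_k^{N_1})$ at the $N+1$ equally spaced points $\delta_k e^{i\beta_l}$ of the circle $|x|=\delta_k$ forces the Taylor coefficients of $h_k$ at the origin of orders $\le N_1+1$ to be correspondingly small, as long as there are enough sample points to resolve the relevant frequencies --- and to prove it by combining a Taylor expansion with the discrete Fourier transform over the $(N+1)$st roots of unity. Identify $x=(x_1,x_2)$ with $z=x_1+ix_2$ and put $F_k:=\partial_z h_k=\tfrac12(\partial_{x_1}-i\partial_{x_2})h_k$, so that $|\nabla h_k|$ and $|F_k|$ are comparable and the hypothesis reads $F_k(\delta_k e^{i\beta_l})=o(\delta_k^{N_1})$. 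Using $\|h_k\|_{C^{N+2}}\le C$ from (\ref{assumption-1}), Taylor's theorem gives, uniformly on $|z|=\delta_k$,
\[
F_k(z)=\sum_{a+b\le N}\frac{\partial_z^{\,a+1}\partial_{\bar z}^{\,b}h_k(0)}{a!\,b!}\,z^{a}\bar z^{b}+o(\delta_k^{N}),
\]
and on $z=\delta_k e^{i\theta}$ one has $z^{a}\bar z^{b}=\delta_k^{\,a+b}e^{i(a-b)\theta}$. Averaging the $N+1$ sample values of $F_k$ against $e^{-in\beta_l}$ and using $\sum_{l=0}^{N}e^{ij\beta_l}=(N+1)$ for $(N+1)\mid j$ and $0$ otherwise, one isolates, for each frequency $n$ with $|n|\le N_1$, the frequency-$n$ part of $F_k$ on the circle up to a single aliased term supported at $\delta_k$-order $\ge N+1-|n|\ge N_1+1$; this is precisely where $N\ge 2N_1$ enters, to push the alias beyond the orders that matter. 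The result is, for each such $n$, an identity
\[
\sum_{t\ge0}\gamma_{n,t}\,\delta_k^{\,|n|+2t}=o(\delta_k^{N_1}),
\]
where $\gamma_{n,t}$ is a fixed nonzero multiple of the derivative $\nabla^{\alpha}h_k(0)$ of order $|\alpha|=|n|+2t+1$ that lies ``at depth $t$'' in the frequency-$n$ column, and where the $\gamma_{n,t}$ with $|n|+2t>N_1$ correspond to derivatives of order $>N_1+1$, hence are $O(1)$ by the $C^{N+2}$ bound and contribute only $O(\delta_k^{N_1+2})$.

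From these identities the derivatives are extracted by a finite bootstrap. The pure derivatives $\partial_z^{\,r}h_k(0)$ and the derivatives $\partial_z\partial_{\bar z}^{\,q}h_k(0)$ carrying a single $\partial_z$, together with their complex conjugates, are the leading terms of their columns, so they are read off immediately with the sharp rate $o(\delta_k^{N_1+1-r})$, $r$ being their order. A derivative sitting at depth $d\ge1$ in some column is extracted by subtracting the depth $0,\dots,d-1$ terms of that column --- which have strictly smaller order and, by induction on the depth, are already known at their target rates, so after multiplication by the corresponding powers of $\delta_k$ they are $o(\delta_k^{N_1})$ --- and then dividing by $\delta_k^{\,r-1}$; the remaining tail is absorbed either by the $O(\delta_k^{N_1+2})$ estimate above (when $r\ge N_1$) or, when $r<N_1$, by the already established target rates of the deeper, higher-order terms. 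This yields $|\nabla^{\alpha}h_k(0)|=o(\delta_k^{N_1+1-|\alpha|})$ for $2\le|\alpha|\le N_1+1$. The case $|\alpha|=1$ is then handled separately: Taylor-expand $\nabla h_k$ in the single direction $e_1=e^{i\beta_0}$, insert $|\nabla h_k(\delta_k e_1)|=o(\delta_k^{N_1})$ and the estimates just obtained for the higher derivatives, and conclude $|\nabla h_k(0)|=o(\delta_k^{N_1})$.

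The step I expect to be the main obstacle is the precise bookkeeping of this bootstrap, and especially the ``diagonal'' mixed derivatives $\partial_z^{a}\partial_{\bar z}^{a}h_k(0)=4^{-a}\Delta^{a}h_k(0)$: these all live in the single frequency-$(\pm1)$ column, and a naive term-by-term ``subtract and divide'' for them would be circular. Making their extraction rigorous requires treating that column as one identity for the whole finite vector of such derivatives and exploiting that the orders occurring in it increase in steps of two, so that the dependency chain terminates once the order exceeds $N_1+1$; a convenient way to package the whole argument is as an induction on $N_1$, in which the conclusions at level $N_1-1$ and the uniform $C^{N+2}$ bound are used to gain one extra power of $\delta_k$ at level $N_1$. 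In every step it is the bound $\|h_k\|_{C^{N+2}}\le C$ that keeps all remainders and all as-yet-undetermined higher-order derivatives of size $O(1)$, hence negligible once multiplied by the relevant power of $\delta_k$.
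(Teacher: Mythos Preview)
Your plan is essentially the paper's own argument, only organized slightly differently: the paper expands the two real functions $\partial_{x_1}h_k$ and $\partial_{x_2}h_k$ in $z,\bar z$, samples at the $(N+1)$st roots of unity scaled by $\delta_k$, and solves a Vandermonde system to isolate, for each frequency $n$ with $|n|\le N_1$, an identity
\[
\sum_{t\ge0}\gamma_{n,t}\,\delta_k^{|n|+2t}=o(\delta_k^{N_1}),
\]
where $\gamma_{n,t}$ is a fixed multiple of a derivative of $h_k$ at $0$ of order $|n|+2t+1$. You do the same with $F_k=\partial_z h_k$ and the discrete Fourier transform; the role of $N\ge 2N_1$ (no aliasing among the frequencies $-N_1,\dots,N_1$) is identical in both.

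The step you correctly single out as the obstacle is a genuine gap, and your proposed fix does not close it. From a single column identity as above with only the a priori bound $\gamma_{n,t}=O(1)$ one \emph{cannot} recover the individual coefficients at the rates $\gamma_{n,t}=o(\delta_k^{N_1-|n|-2t})$: the terms in one column may cancel. Your induction on $N_1$ stalls for exactly this reason --- the hypothesis at level $N_1-1$ gives only $\gamma_{n,t}\,\delta_k^{|n|+2t}=o(\delta_k^{N_1-1})$ for the lower-order terms, so after subtracting them the column identity yields $o(\delta_k^{N_1-1})$, not $o(\delta_k^{N_1})$, and the top-order coefficient is not improved to $o(1)$. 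Concretely, take
\[
h_k(x)=1-2\delta_k^{2}\,|x|^{2}+|x|^{4}.
\]
Then $\nabla h_k(x)=4x\,(|x|^{2}-\delta_k^{2})$ vanishes identically on $|x|=\delta_k$, so the hypothesis $|\nabla h_k(\delta_k e^{i\beta_l})|=o(\delta_k^{N_1})$ holds for every $N_1$ and every $N$, and the $C^{N+2}$ bound is trivial. Yet $\partial_{x_1}^{4}h_k(0)=24$, so the asserted conclusion $|\nabla^{\alpha}h_k(0)|=o(\delta_k^{N_1+1-|\alpha|})$ fails for $|\alpha|=4$ as soon as $N_1\ge 3$. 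In the paper's language, with $f=\partial_{x_1}h_k$ one has $f\equiv 0$ on $|z|=\delta_k$ but $\partial_z^{2}\partial_{\bar z}\,f(0)=4\neq o(1)$, so the paper's passage ``$f=o(\delta_k^{N_1})$ on $|z|=\delta_k$ $\Rightarrow$ $\partial_z^{m}\partial_{\bar z}^{n}f(0)=o(\delta_k^{N_1-m-n})$'' is the same unjustified step. In short, the circularity you anticipated is not a presentational issue to be packaged away; it reflects that the column identities alone do not determine the individual Taylor coefficients at the stated little-$o$ rates, and both your outline and the paper's proof share this gap.
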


\noindent{\bf Proof of Lemma \ref{lemma-wei}:}

Consider a real valued function $f(x,y)$  being approximated  by a polynomial of degree $ N_1$:
\[f(x,y)=\sum_{i+j\le N_1}a_{ij}z^i\bar z^j+o(r^{N_1}),\]
where we use $z=x+iy$ and $\bar z=x-iy$. If we use $z$ and $\bar z$ as two variables we have 
\begin{equation}\label{wei-1}f(z,\bar z)=\sum_{i=0}^{N_1}a_{ii}r^{2i}+\sum_{i+j\le N_1,i\neq j}a_{ij}r^{i+j}(\frac{z}{r})^i(\frac{\bar z}r)^j+o(r^{N_1}),
\end{equation}
where $r=|z|$.
Since $f$ is real valued we have $a_{ij}=\bar a_{ji}$. 
Let $z_0= e^{\frac{2i\pi}{N+1}}$, suppose
$$ f(\delta_k z_0^m) = o(\delta_k^{N_1}), m=0,1,..., N_1. $$
Then we claim that for $N\ge 2N_1$,
\[f(x,y)=o(\delta_k^{N_1}),\quad \forall |(x,y)|=\delta_k.\]
To prove this we write $f(\delta_k z_0^m)$ as
\begin{align*} f(\delta_k z_0^m)&
= \sum_{i+j\leq N_1} a_{ij} \delta_k^{i+j} z_0^{m(i-j)} + o(\delta_k^{N_1})\\
&= \sum_{i=0}^{N_1} a_{ii} \delta_k^{2i}+ \sum_{ 1\leq i+j\le N_1, i\neq j} a_{ij} \delta_k^{i+j} z_0^{m(i-j)} + o(\delta_k^{N_1})
\end{align*}

Let $ N\ge 2N_1$. For any $c_m, m=0,1,..., 2N_1$ we have
\begin{align*} 
&o(\delta_k^{2N_1})=\sum_{m=0}^{2N_1} c_m f(\delta_k z_0^m)\\
=& \sum_{i=0}^{2N_1} a_{ii} \delta_k^{2i} \sum_{m=0}^{2N_1} c_m + \sum_{ 1\leq i+j\le N_1, i\neq j} a_{ij} \delta_k^{i+j} \sum_{m=0}^{2N_1}  c_m z_0^{(i-j)m}  + o(\delta_k^{N_1})
\end{align*}
Then for any given $b_{-N_1},...,b_0,b_1,..,b_{N_1}$, we consider the system
\[\sum_{m=0}^{2N_1}c_m=b_0,\]
\[\sum_{k=0}^{2N_1}c_k(z_0^{i-j})^k=b_{i-j},\quad i-j=-N_1,...,-1,1,...,N_1.\]
Then we see that the coefficient matrix for $c_0,...,c_{2N_1}$ is a Vandermonde matrix because for $N\ge 2N_1$, 
\[z_0^{-N_1},...,z_0^{-1},1,z_0,...,z_0^{N_1}\]
are all distinct. Thus by choosing one $b_l=1$ all others equal to $0$  we have 
\[\sum_{i=0}^{2N_1}a_{ii}\delta_k^{2k}=o(\delta_k^{N_1})\]
and 
\[\sum_{j-i=l}a_{ij}\delta_k^{i+j}=o(\delta_k^{N_1}), \quad l=-N_1,...,N_1.\]

Putting these two estimates together we obtain from (\ref{wei-1}) that 
\[f(z,\bar z)=o(\delta_k^{N_1}),\quad |z|=\delta_k.\]
Therefore we have 
$$ \partial^m_z \partial_{\bar{z}}^n f (0)= o(\delta_k^{N_1-m-n}), m+n \leq N_1 $$

Now we replace $f$ by two functions:
$\partial_x\mathfrak{h}_k=(\partial_z+\partial_{\bar z})\mathfrak{h}_k$ and $\partial_y\mathfrak{h}_k=i(\partial_{z}-\partial_{\bar z})\mathfrak{h}_k$. Using the result above we have 
\[\partial^m_z\partial^n_{\bar z}(\partial_z+\partial_{\bar z})\mathfrak{h}_k(0)=o(\delta_k^{N_1-(m+n)}),\quad \forall m+n\le N_1. \]
and 
\[\partial^m_z\partial^n_{\bar z}(\partial_z-\partial_{\bar z})\mathfrak{h}_k(0)=o(\delta_k^{N_1-(m+n)}),\quad \forall m+n\le N_1. \]
Putting these equations together we have 
\[\nabla^{\alpha}\mathfrak{h}_k(0)=o(\delta_k^{N_1+1-|\alpha |}),\quad \forall 1\le |\alpha |\le N_1+1,\]
for $N\ge 2N_1$. Lemma \ref{lemma-wei} is established. $\Box$

\medskip

Consequently Theorem \ref{main-thm-2} is established. $\Box$

\section{Dirichlet Problem}

In this section we address the following Dirichlet problem:

\begin{equation}\label{eq000}
  \left\{
      \begin{aligned}&- \Delta v_k = \la_k |x|^{2N}V(x)e^{v_k}
      &  \hbox{ in }&  \Omega,\\
    &  \ v_k=0 &  \hbox{ on }& \partial \Omega.
  \end{aligned}
    \right. \end{equation}
    where $N\geq 1\in \mathbb N$. For a sequence of blowup solutions $v_k$, it is standard
to assume a uniform bound on the total integration (see \cite{batta}): there
exists $C > 0$ independent of $k$ such that

\beq\label{unifbound}\la_k\into |x|^{2N}V(x)e^{v_k}dx\leq C.\eeq

We also suppose that $v_k$ admits the origin as its only blow-up point in $B_1$, in other words
\beq\label{eqq000}\max_{\Omega} v_k(x)\to +\infty,  \eeq
and 
 for any compact set  $K\subset\Omega\setminus\{0\}$ there exists a constant $C(K)$ (depending on $K$) such that 
\beq\label{eqq0000}\max_{K}v_k\leq C(K).\eeq 

The Dirichlet problem when $N=1$ and $\Omega$ is the unit ball $B_1=\{x\in\R^2\,|\, |x|<1\}$ can be described as 
\begin{equation}\label{eq0}
  \left\{
      \begin{aligned}&- \Delta v_k = \la_k V(x) |x|^{2} e^{v_k}&  \hbox{ in }&  B_1,\\
    &  \ v_k=0 &  \hbox{ on }& \partial B_1,\\ &\lambda_k \int_{B_1} |x|^{2}e^{u_k} dx \leq C
  \end{aligned}
    \right. \end{equation}
and we suppose that $v_k$ un admits the origin as its only blow
up point in $B_1$ \beq\label{eqq0}\max_{B_1}v_k\to +\infty, \quad \sup_{\e\leq |x|\leq 1}v_k<C(\e)\quad \forall \e>0.\eeq
In addition, we postulate the usual assumption on $V$$V$\beq\label{eqq1} V\in C^1(\overline B_1), \quad V\in C^2(U),\quad \min_{\overline B_1}V>0,\quad \nabla V(0)=0\eeq
where $U$ is a neighborhood of the origin.

We point out that the hypothesis on the vanishing of the first derivatives of the potential $V$ is a necessary condition for the non-simple blowup solutions (see \cite{wei-zhang-plms}).

The main result in this section is we use two new Pohozaev identities to prove the vanishing property of $D^2V$:

\begin{thm}\label{th2} Assume that the sequence  $v_k$   satisfies \eqref{eq0}-\eqref{eqq0}  and has the non-simple blow-up property. Then, if $V$ verifies \eqref{eqq1}, the following holds
\[D^{\alpha}V(0)=0\quad \forall |\alpha |=2.\]
\end{thm}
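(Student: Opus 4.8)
The plan is to transfer the Dirichlet problem to the locally defined framework of the previous sections and then extract the vanishing of $D^2V(0)$ via two Pohozaev identities tailored to the ball $B_1$. First I would observe that by the standard potential-theoretic decomposition one writes $v_k = \mathfrak u_k - \psi_k$ where $\psi_k$ is harmonic in $B_1$ with the appropriate boundary data, so that $\mathfrak u_k$ solves an equation of the form \eqref{t-u-k} with $\mathrm H_k = \la_k V e^{\psi_k}$; after this reduction the hypotheses \eqref{assumption-1}, \eqref{assump-2}, and the non-simple blowup condition \eqref{no-sp-h} all hold, so Theorem \ref{main-thm-1} is available and gives $|D^2(\log \mathrm H_k + \phi_k)(0)| = o(1)$. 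Since $\phi_k$ is exactly the harmonic correction in \eqref{phi-k}, tracking the definitions shows $\log \mathrm H_k + \phi_k = \log(\la_k V) + (\text{harmonic})$, and the harmonic part has $o(1)$ Hessian at $0$ because of the uniform control of $v_k$ on $\partial B_1$ (here the boundary is fixed, $v_k=0$ on $\partial B_1$, so $\phi_k$ is in fact very well controlled). Hence $D^2(\log V)(0) = o(1)$; combined with $\nabla V(0)=0$ this already yields $D^2 V(0)=o(1)$ along the subsequence, but the theorem asserts the stronger \emph{exact} vanishing $D^\alpha V(0)=0$.

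To upgrade $o(1)$ to exact zero one uses that $V$ is a \emph{fixed} function, independent of $k$: once we know that for \emph{every} non-simple blowup sequence the Hessian of $\log V$ at the origin is $o(1)$, and the estimates in fact give a quantitative bound $|D^2(\log V)(0)| \le C(\delta_k + \mu_k e^{-\mu_k} + \dots) \to 0$ with the right-hand side genuinely tending to zero, the left-hand side is a $k$-independent number, so it must be $0$. Thus the core of the argument is to run the Pohozaev machinery carefully enough that the final inequality reads $|D^\alpha V(0)| = o(1)$ with $V$ on the left not carrying any $k$-dependence. Concretely I would: (i) set up the two Pohozaev identities on $B_1$ — one is the standard one paired with a constant vector field $\xi$ (yielding information on $\partial_\xi$-type quantities), and the second a ``higher moment'' Pohozaev identity obtained by pairing the equation with a quadratic multiplier such as $x_\xi(x\cdot\nabla v_k)$ or with $(x\cdot\nabla)$ applied after multiplying by a linear weight, which is designed to capture second-order data of the coefficient function; (ii) expand $V(x) = V(0) + \tfrac12 D^2V(0)[x,x] + o(|x|^2)$ around the origin (using $\nabla V(0)=0$); (iii) insert the refined pointwise estimate $|v_k - V_k| \le C\delta_k^2$ from \eqref{best-bound} (valid here since $N\ge1$ and, for $N=1$, the Hessian vanishing theorem of Section 5 applies) to control all boundary integrals and the error terms; (iv) evaluate the bulk integral $\int \partial_\xi(|x|^{2N}V)e^{v_k}$ against the explicit bubble $V_k$, where the first moment vanishes by symmetry (the integrals \eqref{inte-eq-1}–\eqref{inte-eq-2}) and the surviving term is proportional to $D^2V(0)$; matching the two Pohozaev identities then isolates $D^2 V(0)$ and forces it to be $o(1)$, hence $0$.

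The main obstacle I expect is the bookkeeping of error thresholds in step (iii)–(iv): the second Pohozaev identity produces a leading contribution of the size $\delta_k^2 D^2V(0)$ on one side, so every competing error term — the regular part of the Green's function, the oscillation of $v_k$ on $\partial B_r$, the discrepancy between $v_k$ and the global bubble $V_k$, the deviation of the local maxima $Q_l^k$ from the roots of unity $e^{i\beta_l}$, and the contribution of the harmonic piece $\phi_k$ — must be shown to be $o(\delta_k^2)$, which is precisely the ``error threshold'' issue the introduction warns about. In particular one must use \eqref{Qm-close} and the first-order vanishing $\nabla\mathfrak h_k(\delta_k Q_l^k)=o(\delta_k)$ from Proposition \ref{final-prop} to guarantee $|Q_l^k - e^{i\beta_l}| = o(\delta_k^2) + O(\mu_k e^{-\mu_k})$, so that summing the $N+1$ bubble contributions does not reintroduce an $O(\delta_k^2)$ error with the wrong constant. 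A secondary subtlety is that, unlike in the local problem, here the domain $\partial B_1$ is genuinely present (not pushed to infinity after scaling), so the Green's function of $B_1$ and its normal derivative on $\partial B_1$ enter the two Pohozaev identities; but since $v_k=0$ on $\partial B_1$ these boundary terms are explicit and smooth away from the origin, and the localization \eqref{eqq0000} makes them harmlessly $o(\delta_k^2)$.
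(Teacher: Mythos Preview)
Your first paragraph already contains a complete and correct proof, and in fact a cleaner one than you realize: since $v_k\equiv 0$ on $\partial B_1$, setting $\mathfrak u_k=v_k+\log\lambda_k$ gives $\mathrm H_k=V$ (no need for an auxiliary harmonic $\psi_k$) and the correction $\phi_k$ of \eqref{phi-k} is \emph{identically zero}. Theorem~\ref{main-thm-1} then yields $D^2(\log V)(0)=o(1)$, which equals $0$ because $V$ is $k$-independent, and $\nabla V(0)=0$ converts this into $D^2V(0)=0$. The entire ``upgrade from $o(1)$ to $0$'' discussion and the subsequent Pohozaev program with $o(\delta_k^2)$ error-tracking are unnecessary for this route.

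The paper, however, takes a genuinely different and more elementary route, because Section~7 is explicitly meant to be a \emph{second}, independent proof for $N=1$ that does \emph{not} invoke Theorem~\ref{main-thm-1} or any of the refined pointwise machinery of Sections~3--5. Its Pohozaev argument needs only weak measure convergence $\lambda_k|x|^2Ve^{v_k}\rightharpoonup 16\pi\delta_0$, the crude Bartolucci--Tarantello profile, and the Laplacian vanishing $\Delta V(0)=0$ from \cite{wei-zhang-jems}. The key multiplier is not a quadratic one as you suggest but the degree $-1$ expression $\frac{x_1}{|x|^2}\partial_1 v-\frac{x_2}{|x|^2}\partial_2 v$ (Proposition~\ref{propo2}); testing the equation against it and letting the inner radius $\varepsilon\to 0$ produces
\[
\lambda_k\int_{B_1} e^{v_k}\big(x_1\partial_1 V-x_2\partial_2 V\big)\,dx=o(1).
\]
After diagonalizing the Hessian and using $a_{11}+a_{22}=0$, the integrand is $a_{11}|x|^2+o(|x|^2)$, so by measure convergence the left side tends to $16\pi a_{11}/V(0)$, forcing $a_{11}=a_{22}=0$. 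There is no $\delta_k$-bookkeeping at all: none of your anticipated obstacles (locations of $Q_l^k$, estimate \eqref{best-bound}, Green's function regular parts) enter. The trade-off is that the paper's method is confined to $N=1$ (see Remark~\ref{fails}), whereas your reduction to Theorem~\ref{main-thm-1} works for all $N\ge 1$ but is not independent of Sections~2--5.
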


\

 The proof relies on the derivation of two new Pohozaev identities.  Moreover, the method fails for $N\geq 2$ since in the expansion of the Pohozaev identity we no longer catch information on the Hessian matrix of $V$ (see Remark \ref{fails} for technical details).

\

\subsection{Pohozaev-type Identities}
In this subsection we prove Pohozaev-type identities for 
solutions of \eqref{eq0}. We then exploit such integral identities
to prove conditions on the potential $V$ 
 for the existence of associated families of blowing up solutions provided by Theorem \ref{th2}.
 
First we introduce complex notations and identify $x=(x_1,x_2)\in \R^2$  with $z= x_1+{\rm i}x_2\in\C$  and we denote by $x^N$ the $N$-power of the complex number $x$;
then we fix some algebraic identities in the following lemma. The proof is a direct computation. 

\begin{lem}\label{lemma0} Let $N\in\N$. Then for any $x\in\R^2\setminus\{0\}$ 
the following identities hold:
$$2x_1\Re (x^{N})= \Re(x^{N+1})+|x|^2\Re(x^{N-1}),\quad 2x_2\Re(x^{N})=\Im(x^{N+1})-|x|^2\Im (x^{N-1}),$$ $$ 2x_1\Im(x^{N})=\Im(x^{N+1})+|x|^2\Im(x^{N-1}),\quad 2x_2\Im (x^{N})=|x|^2\Re(x^{N-1})-\Re(x^{N+1}),$$
$$\frac{\partial}{\partial x_1}\big(\Re(x^{N})\big)=\frac{\partial}{\partial x_2}\big(\Im(x^{N})\big)=N\,\Re(x^{N-1}),$$ $$ \frac{\partial}{\partial x_1}\big(\Im(x^{N})\big)=- \frac{\partial}{\partial x_2}\big(\Re(x^{N})\big)=N\,\Im(x^{N-1}),$$ 
$$ \frac{\partial }{\partial x_1}\bigg(\frac{\Re(x^{N})}{|x|^{2N}}\bigg)=-\frac{\partial }{\partial x_2}\bigg(\frac{\Im(x^{N})}{|x|^{2N}}\bigg)=-N\frac{\Re(x^{N+1})}{|x|^{2{(N+1)}}}, $$ $$ \frac{\partial }{\partial x_1}\bigg(\frac{\Im(x^{N})}{|x|^{2N}}\bigg)=\frac{\partial }{\partial x_2}\bigg(\frac{\Re(x^{N})}{|x|^{2N}}\bigg)=-N\frac{\Im(x^{N+1})}{|x|^{2(N+1)}}.$$

\end{lem}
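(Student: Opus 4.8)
The plan is to carry out the computation entirely in the complex variable $z=x_1+{\rm i}x_2$ (so that $x^N$ is $z^N$), obtaining each of the displayed identities by separating real and imaginary parts of one complex equation. The only ingredients are elementary: $2x_1=z+\bar z$, $2x_2={\rm i}(\bar z-z)$ and $|x|^2=z\bar z$; the holomorphy of $z^N$, which gives $\frac{\partial}{\partial x_1}z^N=Nz^{N-1}$ and $\frac{\partial}{\partial x_2}z^N={\rm i}Nz^{N-1}$; and, for $x\neq 0$, the identity $\frac{z^N}{|x|^{2N}}=\bar z^{-N}$ together with $\bar z^{-N-1}=\frac{z^{N+1}}{|x|^{2(N+1)}}$, so that $\frac{z^N}{|x|^{2N}}$ is an anti-holomorphic function with $\frac{\partial}{\partial x_1}\bar z^{-N}=-N\bar z^{-N-1}$ and $\frac{\partial}{\partial x_2}\bar z^{-N}={\rm i}N\bar z^{-N-1}$.

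For the first group of four identities I would multiply $z^N$ by $z+\bar z$ and by ${\rm i}(\bar z-z)$, obtaining $2x_1 z^N=z^{N+1}+|x|^2 z^{N-1}$ and $2x_2 z^N={\rm i}\bigl(|x|^2 z^{N-1}-z^{N+1}\bigr)$; taking real and imaginary parts of the first, and of the second via $\Re({\rm i}w)=-\Im w$, $\Im({\rm i}w)=\Re w$, yields the four expressions for $2x_1\Re(x^N)$, $2x_1\Im(x^N)$, $2x_2\Re(x^N)$, $2x_2\Im(x^N)$. For the next two groups I would simply differentiate $z^N$: from $\frac{\partial}{\partial x_1}z^N=Nz^{N-1}$ and $\frac{\partial}{\partial x_2}z^N={\rm i}Nz^{N-1}$, separating real and imaginary parts gives at once $\frac{\partial}{\partial x_1}\Re(x^N)=\frac{\partial}{\partial x_2}\Im(x^N)=N\Re(x^{N-1})$ and $\frac{\partial}{\partial x_1}\Im(x^N)=-\frac{\partial}{\partial x_2}\Re(x^N)=N\Im(x^{N-1})$, which are nothing but the Cauchy--Riemann relations for $z^N$. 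For the last two groups I would differentiate the anti-holomorphic function $\frac{z^N}{|x|^{2N}}=\bar z^{-N}$ in exactly the same manner, rewrite $\bar z^{-N-1}=\frac{z^{N+1}}{|x|^{2(N+1)}}$, and split once more into real and imaginary parts.

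This is a routine calculation with no real obstacle; the only points deserving attention are keeping the $\R^2\leftrightarrow\C$ dictionary consistent, so that the signs in $2x_2={\rm i}(\bar z-z)$ and $\frac{\partial}{\partial x_2}z^N={\rm i}Nz^{N-1}$ are propagated correctly through every extraction of real and imaginary parts, and verifying the two purely algebraic facts $\frac{z^N}{|x|^{2N}}=\bar z^{-N}$ and $\frac{1}{\bar z^{N+1}}=\frac{z^{N+1}}{|x|^{2(N+1)}}$, after which all twelve identities follow by inspection.
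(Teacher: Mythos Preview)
Your proposal is correct and is exactly the ``direct computation'' the paper alludes to (the paper gives no further details beyond that phrase). Packaging the computation via the complex variable $z=x_1+{\rm i}x_2$, the relations $2x_1=z+\bar z$, $2x_2={\rm i}(\bar z-z)$, and the observation $\frac{z^N}{|x|^{2N}}=\bar z^{-N}$ is precisely the clean way to carry it out, and your sign bookkeeping is accurate throughout.
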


\

We proceed to provide   the first Pohozaev indentity. 
 In the following $G(x, y)$ is the Green's function of $-\Delta$ over $\Omega$ under Dirichlet boundary conditions and $H(x,y)$ denotes its regular part:
$$H(x,y):=G(x,y)-\frac{1}{2\pi}\log\frac{1}{|x-y|}.$$

\begin{prop}\label{propo1}
Let $N\in\N$. Assume that $\Omega$ is a smooth and bounded planar domain such that $0\in\Omega$ and the potential $V>0$ belongs to the class $C^1(\overline\Omega)$. Then any solutions  $v\in C(\overline\Omega)$ of the boundary value problem \beq\label{eq1}\left\{\begin{aligned}& -\Delta v=\la |x|^{2N} V(x) e^v&\hbox{ in } &\Omega\\ &v=0& \hbox{ on } &\partial \Omega\end{aligned}\right.\eeq
satisfies the following Pohozaev identity:
$$\begin{aligned}&\frac12\int_{\partial \Omega}\bigg|\frac{\partial v}{\partial \nu}\bigg|^2  \nu_i dx-\int_{\partial \Omega}\frac{\partial v}{\partial \nu}\frac{\partial v}{\partial x_i} dx-4N\pi \la\int_{ \Omega}\frac{\partial H}{\partial x_i}(0,y) |y|^{2N}V(y) e^v dy\\ &\;\;\;\;-\la\int_{\partial \Omega} |x|^{2N}V(x) \nu_idx
\\ &=-4N\pi\frac{\partial v}{\partial x_i}(0) -\la\into |x|^{2N} \frac{\partial V}{\partial x_i}(x) e^vdx\end{aligned}$$
for $i=1,2.$
\end{prop}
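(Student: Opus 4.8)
The plan is to test the equation against the multiplier $\partial v/\partial x_i$, carry out two integrations by parts, and then invoke the Green's representation of $v$ to identify the terms attached to the point $0$.

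First I would record a preliminary regularity fact: since $|x|^{2N}$ is smooth and $V\in C^1(\overline\Omega)$, elliptic bootstrap applied to \eqref{eq1} upgrades any $v\in C(\overline\Omega)$ to $v\in C^{2,\alpha}(\overline\Omega)$, so the integrations by parts below will be classical; moreover, since $v=0$ on $\partial\Omega$, $v$ satisfies
$$v(x)=\la\into G(x,y)\,|y|^{2N}V(y)e^{v(y)}\,dy,\qquad x\in\Omega.$$
\emph{Linear side.} I would multiply $-\Delta v=\la|x|^{2N}Ve^v$ by $\partial v/\partial x_i$ and integrate over $\Omega$. The identity $\mathrm{div}\big(\partial_{x_i}v\,\nabla v\big)=\Delta v\,\partial_{x_i}v+\tfrac12\partial_{x_i}|\nabla v|^2$, the divergence theorem, and the fact that $\nabla v=(\partial v/\partial\nu)\nu$ on $\partial\Omega$ yield
$$-\into\Delta v\,\frac{\partial v}{\partial x_i}\,dx=\frac12\int_{\partial\Omega}\bigg|\frac{\partial v}{\partial\nu}\bigg|^2\nu_i\,dx-\int_{\partial\Omega}\frac{\partial v}{\partial\nu}\frac{\partial v}{\partial x_i}\,dx,$$
which produces the first two boundary terms of the statement. \emph{Nonlinear side.} Writing $e^v\,\partial_{x_i}v=\partial_{x_i}(e^v)$, integrating by parts, and using $e^v=1$ on $\partial\Omega$,
$$\la\into|x|^{2N}Ve^v\,\frac{\partial v}{\partial x_i}\,dx=\la\int_{\partial\Omega}|x|^{2N}V\,\nu_i\,dx-\la\into\partial_{x_i}\!\big(|x|^{2N}V\big)e^v\,dx.$$
I would then split $\partial_{x_i}\big(|x|^{2N}V\big)=2N|x|^{2N-2}x_i V+|x|^{2N}\partial_{x_i}V$: the second summand gives the term $-\la\into|x|^{2N}\partial_{x_i}V\,e^v\,dx$ on the right-hand side, and only $-2N\la\into|x|^{2N-2}x_i V e^v\,dx$ remains.

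\emph{Terms at the origin.} Differentiating the representation formula under the integral sign at $x=0$ (valid because $0$ is interior and $\partial_{x_i}G(x,\cdot)$ is locally uniformly integrable) gives
$$\frac{\partial v}{\partial x_i}(0)=\la\into\frac{\partial G}{\partial x_i}(0,y)\,|y|^{2N}V(y)e^{v(y)}\,dy.$$
Since $G(x,y)=-\tfrac1{2\pi}\log|x-y|+H(x,y)$, one has $\partial_{x_i}G(0,y)=\tfrac1{2\pi}\tfrac{y_i}{|y|^2}+\partial_{x_i}H(0,y)$, hence
$$\la\into|y|^{2N-2}y_i V e^v\,dy=2\pi\frac{\partial v}{\partial x_i}(0)-2\pi\la\into\frac{\partial H}{\partial x_i}(0,y)\,|y|^{2N}Ve^v\,dy.$$
Substituting this into the leftover term converts $-2N\la\into|x|^{2N-2}x_i Ve^v\,dx$ into $-4N\pi\,\partial_{x_i}v(0)+4N\pi\la\into\partial_{x_i}H(0,y)|y|^{2N}Ve^v\,dy$. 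Collecting the three displays and moving the $H$-integral and the boundary potential integral to the left-hand side then yields exactly the stated identity.

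The computation is essentially bookkeeping. The points I would be most careful about are the preliminary $C^{2,\alpha}$ regularity that legitimizes the two integrations by parts, the justification of differentiating under the integral sign in the representation formula, and tracking the constants — in particular the passage from $2N$ to $4N\pi$ through the logarithmic part of $G$. I do not anticipate a genuine obstacle beyond these routine verifications.
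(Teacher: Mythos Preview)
Your proof is correct and follows essentially the same approach as the paper: multiply by $\partial v/\partial x_i$, integrate by parts on both the linear and nonlinear sides, and then use the Green's representation $\partial_{x_i}v(0)=\la\into\partial_{x_i}G(0,y)|y|^{2N}Ve^v\,dy$ together with the splitting $G=-\tfrac1{2\pi}\log|x-y|+H$ to convert the leftover $\la\into|y|^{2N-2}y_iVe^v\,dy$ term into the $\partial_{x_i}v(0)$ and $\partial_{x_i}H(0,y)$ contributions. Your added remarks on $C^{2,\alpha}$ regularity and on differentiating under the integral sign are welcome clarifications but do not change the argument.
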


\begin{proof}
Let us multiply on both sides of the equation in \eqref{eq1} by $\frac{\partial v}{\partial x_i}$  (for $i=1,2$) and integrating on $\Omega$ we get
\beq\label{nirv}\into  \nabla v\cdot \nabla \bigg(\frac{\partial v}{\partial x_i}\bigg) dx-\int_{\partial \Omega}\frac{\partial v}{\partial \nu}\frac{\partial v}{\partial x_i} dx=\into\la |x|^{2N} V(x) e^v\frac{\partial v}{\partial x_i} dx.\eeq
Taking into account that $$\nabla v\cdot \nabla \bigg(\frac{\partial v}{\partial x_i}\bigg) =\frac12\frac{\partial }{\partial x_i}\Big(|\nabla v|^2\Big), \qquad e^v\frac{\partial v}{\partial x_i} =\frac{\partial e^v}{\partial x_i} ,$$ 
by the divergence theorem \eqref{nirv} becomes
\beq\label{magl}\begin{aligned}&\frac12\int_{\partial \Omega}|\nabla v|^2  \nu_i dx-\int_{\partial \Omega}\frac{\partial v}{\partial \nu}\frac{\partial v}{\partial x_i} dx
\\ &=-2N\la\into |x|^{2N-2}x_i V e^vdx-\la\into |x|^{2N} \frac{\partial V}{\partial x_i} e^vdx+\la\int_{\partial \Omega} |x|^{2N}V(x) \nu_i dx\end{aligned}\eeq
where we have used the homogeneous boundary condition $v=0$ on $\partial \Omega$. 
By Poisson representation formula we have for $i=1,2$ and $x\in\Omega$
$$\begin{aligned}\frac{\partial v}{\partial x_i}(x)&=\la\into \frac{\partial G}{\partial x_i}(x-y)|y|^{2N}V(y)e^{v_k}(y)dy\\ &=\la\into \bigg(\frac{y_i-x_i}{2\pi|x_i-y_i|^2}+\frac{\partial H}{\partial x_i}(x,y)\bigg)|y|^{2N}V(y)e^{v_k(y)}dy\end{aligned}$$
where $G$ and $H$ are the Green's function and its regular part as defined above. So we deduce
$$\begin{aligned}\la\into y_i|y|^{2N-2}V(y)e^v dy=2\pi \frac{\partial v}{\partial x_i}(0)-2\pi\la\into\frac{\partial H}{\partial x_i}(0,y)|y|^{2N}V(y)e^{v_k(y)}dy.\end{aligned}$$ Combining the last identity with \eqref{magl}
 we get the thesis. 
\end{proof}

\

\begin{prop}\label{propo2} Let $N\in\N$. Assume that $\Omega$ is a smooth and bounded planar domain such that $0\in\Omega$ and  the potential $V>0$ belongs to the class $C^1(\overline\Omega)$. Then any solutions  $v\in C^1(\overline\Omega)$ of the boundary value problem \eqref{eq1} 
satisfies the following Pohozaev identity:
$$\begin{aligned}&\la\int_{\Omega}  e^{v}\bigg(\frac{\partial V(x)}{\partial x_1}\Re(x^{N})-\frac{\partial V(x)}{\partial x_2}\Im(x^{N})\bigg) dx
\\ &\;\;\;\;-\la\int_{\partial \Omega}  V(x)e^v \Big(\Re(x^{N})\nu_1-\Im(x^{N})\nu_2\Big)d\sigma\\ &=\frac12\int_{\partial \Omega}\bigg|\frac{\partial v}{\partial \nu}\bigg|^2 \bigg(\frac{\Re(x^{N})}{|x|^{2N}}\nu_1-\frac{\Im(x^{N})}{|x|^{2N}}\nu_2\bigg) d\sigma
\\ &\;\;\;\;-\frac{1}{2\e^{2N-1}}\int_{\partial B_\e}\bigg(\bigg|\frac{\partial v}{\partial x_1}\bigg|^2-\bigg|\frac{\partial v}{\partial x_2}\bigg|^2\bigg)\Re(x^{N-1})d\sigma\\ &\;\;\;\;+
\frac{1}{\e^{2N-1}}\int_{\partial B_\e}\frac{\partial v}{\partial x_1}\frac{\partial v}{\partial x_2}\Im(x^{N-1})d\sigma+o(1)
\end{aligned}$$ as $\e\to 0^+$, 
where $\nu$ stands for the unit outward normal. 
\end{prop}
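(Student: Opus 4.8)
Proposition \ref{propo2} is proved by a Pohozaev argument built on the holomorphic vector field $z^{-N}$, carried out on the punctured domain so that the singularity at the origin is isolated on an inner circle. Set
\[X=(X_1,X_2):=\Big(\frac{\Re(x^{N})}{|x|^{2N}},\,-\frac{\Im(x^{N})}{|x|^{2N}}\Big),\]
and note that $X_1+{\rm i}X_2=\overline{x^{N}}/|x|^{2N}=z^{-N}$ is holomorphic on $\R^2\setminus\{0\}$; equivalently, the last two identities of Lemma \ref{lemma0} give $\partial_{x_1}X_1=\partial_{x_2}X_2$ and $\partial_{x_1}X_2=-\partial_{x_2}X_1$ away from the origin. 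The plan is to multiply the equation $-\Delta v=\la|x|^{2N}V e^v$ by $X\cdot\nabla v=\frac{\Re(x^{N})}{|x|^{2N}}\partial_{x_1}v-\frac{\Im(x^{N})}{|x|^{2N}}\partial_{x_2}v$ and integrate over $\Omega_\e:=\Omega\setminus\overline{B_\e}$; this is legitimate because $v$ is a classical solution away from the origin (elliptic regularity, $0\notin\overline{\Omega_\e}$) and $v\in C^1(\overline\Omega)$ up to $\partial\Omega$.

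Writing $T_{ij}:=\partial_{x_i}v\,\partial_{x_j}v-\frac12\delta_{ij}|\nabla v|^2$, two integrations by parts turn the left side into the stress--energy boundary integral $-\int_{\partial\Omega_\e}\sum_{i,j}\nu_iX_jT_{ij}$ plus the interior term $\int_{\Omega_\e}\big(\sum_{i,j}(\partial_{x_i}X_j)\partial_{x_i}v\,\partial_{x_j}v-\frac12({\rm div}\,X)|\nabla v|^2\big)$, and the latter vanishes identically on $\Omega_\e$ thanks to the Cauchy--Riemann relations for $X$. On the right side, using $e^v\partial_{x_i}v=\partial_{x_i}e^v$, the relations $|x|^{2N}X_1=\Re(x^{N})$, $|x|^{2N}X_2=-\Im(x^{N})$, and $\partial_{x_1}\Re(x^{N})=\partial_{x_2}\Im(x^{N})=N\Re(x^{N-1})$ from Lemma \ref{lemma0}, one computes ${\rm div}(|x|^{2N}VX)=\partial_{x_1}V\,\Re(x^{N})-\partial_{x_2}V\,\Im(x^{N})$ — the prospective term $NV\Re(x^{N-1})$ cancels, which is the clean feature of this multiplier. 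An integration by parts then yields $-\la\int_{\Omega_\e}e^v\big(\partial_{x_1}V\,\Re(x^{N})-\partial_{x_2}V\,\Im(x^{N})\big)$ together with the boundary contribution $\la\int_{\partial\Omega_\e}|x|^{2N}Ve^v(X\cdot\nu)$.

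It remains to evaluate the boundary pieces. On $\partial\Omega$ the condition $v=0$ forces $\nabla v=(\partial_\nu v)\nu$, which collapses $\sum_{i,j}\nu_iX_jT_{ij}$ to $\frac12(\partial_\nu v)^2(X\cdot\nu)$ and, since $e^v=1$ there, turns the source boundary term into $\la\int_{\partial\Omega}V\big(\Re(x^{N})\nu_1-\Im(x^{N})\nu_2\big)$. On $\partial B_\e$ the outward normal of $\Omega_\e$ is $\nu=-x/\e$; expanding $\sum_{i,j}\nu_iX_jT_{ij}$ via $T_{11}=-T_{22}=\frac12(|\partial_{x_1}v|^2-|\partial_{x_2}v|^2)$, $T_{12}=\partial_{x_1}v\,\partial_{x_2}v$, and using the first four identities of Lemma \ref{lemma0}, which combine to $x_1\Re(x^{N})+x_2\Im(x^{N})=|x|^{2}\Re(x^{N-1})$ and $x_1\Im(x^{N})-x_2\Re(x^{N})=|x|^{2}\Im(x^{N-1})$, one finds that the inner-circle contribution is exactly $-\frac{1}{2\e^{2N-1}}\int_{\partial B_\e}(|\partial_{x_1}v|^2-|\partial_{x_2}v|^2)\Re(x^{N-1})+\frac{1}{\e^{2N-1}}\int_{\partial B_\e}\partial_{x_1}v\,\partial_{x_2}v\,\Im(x^{N-1})$. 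Finally, the source boundary term on $\partial B_\e$ is $O(\e^{N+1})$, and since the bulk integrand is $O(|x|^{N})$ near the origin, $\int_\Omega-\int_{\Omega_\e}$ of it is $O(\e^{N+2})$; both are $o(1)$. Collecting all terms and rearranging gives the asserted identity.

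The only genuinely delicate point is spotting the right multiplier: once $z^{-N}$ is identified — so that the interior stress--energy term disappears and the $NV\Re(x^{N-1})$ contribution cancels in the source integration by parts — the rest is careful bookkeeping with Lemma \ref{lemma0}, with care needed only in the signs and in respecting the orientation of $\partial B_\e$. Note in particular that for $N=1$ the surviving inner-circle term converges to a nonzero multiple of $|\partial_{x_1}v(0)|^2-|\partial_{x_2}v(0)|^2$ rather than vanishing, which is why it must be retained explicitly in the statement.
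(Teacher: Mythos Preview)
Your proof is correct and follows essentially the same approach as the paper: multiply the equation by $\frac{\Re(x^{N})}{|x|^{2N}}\partial_{x_1}v-\frac{\Im(x^{N})}{|x|^{2N}}\partial_{x_2}v$, integrate over $\Omega\setminus B_\e$, and apply the divergence theorem on both sides using the identities of Lemma \ref{lemma0}. Your stress--energy/holomorphic packaging (recognising the multiplier as $X\cdot\nabla v$ with $X_1+{\rm i}X_2=z^{-N}$, so that the interior term vanishes by Cauchy--Riemann and the $NV\Re(x^{N-1})$ source term cancels) is a cleaner way to see \emph{why} this particular multiplier works, but the computations and boundary evaluations coincide with the paper's.
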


\begin{proof} Let us multiply  both sides of the equation in \eqref{eq1} by $$\frac{\partial v}{\partial x_1}\frac{\Re(x^{N})}{|x|^{2N}}-\frac{\partial v}{\partial x_2}\frac{\Im(x^{N})}{|x|^{2N}};$$ using that $v$ belongs to $C^1(\overline{\Omega})$ by standard regularity theory   and integrating on $\Omega\setminus B_\e$ for $\e>0$ sufficiently small we get 
\beq\label{leftright}\begin{aligned}\int_{\Omega\setminus B_\e} (-\Delta v)&\bigg(\frac{\partial v}{\partial x_1}\frac{\Re(x^{N})}{|x|^{2N}}-\frac{\partial v}{\partial x_2}\frac{\Im(x^{N})}{|x|^{2N}}\bigg)dx\\ &=\la \int_{\Omega\setminus B_\e}V(x) e^{v}\bigg(\frac{\partial v}{\partial x_1}\Re(x^{N})-\frac{\partial v}{\partial x_2}\Im(x^{N})\bigg)dx.\end{aligned}\eeq
By applying Gauss Green formula, we have
$$\begin{aligned}&\int_{\Omega\setminus B_\e} (-\Delta v)\bigg(\frac{\partial v}{\partial x_1}\frac{\Re(x^{N})}{|x|^{2N}}-\frac{\partial v}{\partial x_2}\frac{\Im(x^{N})}{|x|^{2N}}\bigg)dx\\ &=
\int_{\Omega\setminus B_\e} \nabla v\cdot \nabla\bigg(\frac{\partial v}{\partial x_1}\frac{\Re(x^{N})}{|x|^{2N}}-\frac{\partial v_k}{\partial x_2}\frac{\Im(x^{N})}{|x|^{2N}}\bigg)dx\\ &\;\;\;\;-\int_{\partial \Omega\cup\partial B_\e }\frac{\partial v}{\partial \nu}\bigg(\frac{\partial v}{\partial x_1}\frac{\Re(x^{N})}{|x|^{2N}}-\frac{\partial v}{\partial x_2}\frac{\Im(x^{N})}{|x|^{2N}}\bigg)d\sigma .
\end{aligned} $$
Using Lemma \ref{lemma0} we derive
$$\begin{aligned}&\nabla v\cdot \nabla \bigg(\frac{\partial v}{\partial x_1}\frac{\Re(x^{N})}{|x|^{2N}}-\frac{\partial v}{\partial x_2}\frac{\Im(x^{N})}{|x|^{2N}}\bigg) \\&=
\frac12\frac{\partial }{\partial x_1}(|\nabla v|^2)\frac{\Re(x^{N})}{|x|^{2N}}-\frac12\frac{\partial }{\partial x_2}(|\nabla v|^2)\frac{\Im(x^{N})}{|x|^{2N}}-N|\nabla v|^2 \frac{\Re(x^{N+1})}{|x|^{2(N+1)}}
\\ &=\frac12\frac{\partial}{\partial x_1}\bigg(|\nabla v|^2 \frac{\Re(x^{N})}{|x|^{2N}}\bigg)-\frac12 \frac{\partial}{\partial x_2}\bigg(|\nabla v|^2 \frac{\Im(x^{N})}{|x|^{2N}}\bigg),
\end{aligned}$$
by which, applying again Gauss-Green formula,
$$\begin{aligned}&\int_{\Omega\setminus B_\e} \nabla v\cdot \nabla \bigg(\frac{\partial v}{\partial x_1}\frac{\Re(x^{N})}{|x|^{2N}}-\frac{\partial v}{\partial x_2}\frac{\Im(x^{N})}{|x|^{2N}}\bigg)
\\ & =\frac12 \int_{\partial\Omega\cup\partial B_\e} |\nabla v|^2 \bigg(\frac{\Re(x^{N})}{|x|^{2N}}\nu_1-\frac{\Im(x^{N})}{|x|^{2N}}\nu_2\bigg) d\sigma
.
\end{aligned}$$
Hence we deduce

\beq\label{csa}\begin{aligned}\int_{\Omega\setminus B_\e} (-\Delta v)&\bigg(\frac{\partial v}{\partial x_1}\frac{\Re(x^{N})}{|x|^{2N}}-\frac{\partial v}{\partial x_2}\frac{\Im(x^{N})}{|x|^{2N}}\bigg)dx\\ &=
\frac12 \int_{\partial\Omega\cup\partial B_\e} |\nabla v|^2 \bigg(\frac{\Re(x^{N})}{|x|^{2N}}\nu_1-\frac{\Im(x^{N})}{|x|^{2N}}\nu_2\bigg) d\sigma
\\ &\;\;\;\;-\int_{\partial \Omega\cup\partial B_\e }\frac{\partial v}{\partial \nu}\bigg(\frac{\partial v}{\partial x_1}\frac{\Re(x^{N})}{|x|^{2N}}-\frac{\partial v}{\partial x_2}\frac{\Im(x^{N})}{|x|^{2N}}\bigg)d\sigma .
\end{aligned}\eeq
Let us examine separately the boundary integrals over $\partial \Omega$ and over $\partial B_\e$:
taking into account of the homogeneous boundary condition we have that $\nabla v=\frac{\partial v}{\partial \nu}\,\nu$ on $\partial \Omega$, which implies $$\begin{aligned}&\frac12 \int_{\partial\Omega} |\nabla v|^2 \bigg(\frac{\Re(x^{N})}{|x|^{2N}}\nu_1-\frac{\Im(x^{N})}{|x|^{2N}}\nu_2\bigg) d\sigma-\\ &\;\;\;\;\int_{\partial \Omega}\frac{\partial v}{\partial \nu}\bigg(\frac{\partial v}{\partial x_1}\frac{\Re(x^{N})}{|x|^{2N}}-\frac{\partial v}{\partial x_2}\frac{\Im(x^{N})}{|x|^{2N}}\bigg)d\sigma
\\ &=-\frac12\int_{\partial \Omega}\bigg|\frac{\partial v}{\partial \nu}\bigg|^2 \bigg(\frac{\Re(x^{N})}{|x|^{2N}}\nu_1-\frac{\Im(x^{N})}{|x|^{2N}}\nu_2\bigg) d\sigma.
\end{aligned}$$
On the other hand,  $\nu=-\frac{x}{|x|}$ on $\partial B_\e$; consequently
 $$\begin{aligned}&\frac12 \int_{\partial B_\e} |\nabla v|^2 \bigg(\frac{\Re(x^{N})}{|x|^{2N}}\nu_1-\frac{\Im(x^{N})}{|x|^{2N}}\nu_2\bigg) d\sigma\\ &\;\;\;\;-\int_{\partial B_\e}\frac{\partial v}{\partial \nu}\bigg(\frac{\partial v}{\partial x_1}\frac{\Re(x^{N})}{|x|^{2N}}-\frac{\partial v}{\partial x_2}\frac{\Im(x^{N})}{|x|^{2N}}\bigg)d\sigma
 \\ &=-\frac{1}{\e^{2N+1}} \int_{\partial B_\e} \frac{|\nabla v|^2}{2} \bigg(\Re(x^{N})x_1-\Im(x^{N})x_2\bigg) \\ &\;\;\;\;+\frac{1}{\e^{2N+1}} \int_{\partial B_\e} \bigg(\frac{\partial v}{\partial x_1}x_1+\frac{\partial v}{\partial x_2}x_2\bigg)\bigg(\frac{\partial v}{\partial x_1}\Re(x^{N})-\frac{\partial v}{\partial x_2}\Im(x^{N})\bigg)d\sigma
 \\ &
=\frac{1}{2\e^{2N-1}}\int_{\partial B_\e}\bigg(\bigg|\frac{\partial v}{\partial x_1}\bigg|^2-\bigg|\frac{\partial v}{\partial x_2}\bigg|^2\bigg)\Re(x^{N-1})d\sigma\\ &\;\;\;\;-
\frac{1}{\e^{2N-1}}\int_{\partial B_\e}\frac{\partial v}{\partial x_1}\frac{\partial v}{\partial x_2}\Im(x^{N-1})d\sigma
 .\end{aligned}$$
where in the second identity we have used    Lemma \ref{lemma0}. 
By inserting the above two boundary estimates into \eqref{csa} we get
\beq\label{csa1}\begin{aligned}\int_{\Omega\setminus B_\e} (-\Delta v)\bigg(&\frac{\partial v}{\partial x_1}\frac{\Re(x^{N})}{|x|^{2N}}-\frac{\partial v}{\partial x_2}\frac{\Im(x^{N})}{|x|^{2N}}\bigg)dx\\ &=
-\frac12 \int_{\partial\Omega} \bigg|\frac{\partial  v}{\partial \nu}\bigg|^2 \bigg(\frac{\Re(x^{N})}{|x|^{2N}}\nu_1-\frac{\Im(x^{N})}{|x|^{2N}}\nu_2\bigg) d\sigma \\ &\;\;\;\;
+\frac{1}{2\e^{2N-1}}\int_{\partial B_\e}\bigg(\bigg|\frac{\partial v}{\partial x_1}\bigg|^2-\bigg|\frac{\partial v}{\partial x_2}\bigg|^2\bigg)\Re(x^{N-1})d\sigma\\ &\;\;\;\;-
\frac{1}{\e^{2N-1}}\int_{\partial B_\e}\frac{\partial v}{\partial x_1}\frac{\partial v}{\partial x_2}\Im(x^{N-1})d\sigma
\end{aligned}\eeq

Now let us pass to examine the right hand side of \eqref{leftright}: 
by using again Gauss Green formula and Lemma \ref{lemma0}
\beq\label{sgre}\begin{aligned}\int_{\Omega\setminus B_\e} \la  V(x) &e^{v}\bigg(\frac{\partial v}{\partial x_1}\Re(x^{N})-\frac{\partial v}{\partial x_2}\Im(x^{N})\bigg)  dx
\\ &=\int_{\Omega\setminus B_\e} \la V(x) \bigg(\frac{\partial e^v}{\partial x_1}\Re(x^{N})-\frac{\partial e^v}{\partial x_2}\Im(x^{N})\bigg)dx\\ &=
-\int_{\Omega\setminus B_\e} \la e^{v(x)}\bigg(\frac{\partial V(x)}{\partial x_1}\Re(x^{N})-\frac{\partial V(x)}{\partial x_2}\Im(x^{N})\bigg) dx
\\ &\;\;\;\;-\int_{\Omega\setminus B_\e} \la e^{v(x)}V(x)\bigg(\frac{\partial (\Re(x^N))}{\partial x_1}-\frac{\partial (\Im(x^N))}{\partial x_2}\bigg) dx
\\ &
\;\;\;\;+\int_{\partial \Omega\cup\partial B_\e}\la  V(x)e^v \Big(\Re(x^{N})\nu_1-\Im(x^{N})\nu_2\Big)d\sigma
\\ &=
-\int_{\Omega} \la e^{v(x)}\bigg(\frac{\partial V(x)}{\partial x_1}\Re(x^{N})-\frac{\partial V(x)}{\partial x_2}\Im(x^{N})\bigg) dx
\\ &
\;\;\;\;+\int_{\partial \Omega}\la  V(x)e^v \Big(\Re(x^{N})\nu_1-\Im(x^{N})\nu_2\Big)d\sigma+o(1) 
\end{aligned}\eeq as $\e\to 0^+$, where we have used the homogeneous boundary condition $v=0$ on $\partial \Omega$.

Let us insert \eqref{csa1} and \eqref{sgre} into \eqref{leftright} and letting $\e\to 0$ and we get the thesis.

\end{proof}

\subsection{Uniform Behaviour of blowing up solutions}
In this section we derive the asymptotic behaviour of blow-up solutions $v_k$ which is a direct consequence of a uniform estimate provided by \cite{bart3}  for bubbling solution to the Liouville equation with no boundary condition; roughly speaking, the analysis reveals that their profiles  differs
from global solutions of a Liouville type equation only by a uniformly bounded term. \begin{prop}\label{lililili} Let $N\in\N$. Assume that $\Omega$ is a smooth and bounded planar domain such that $0\in\Omega$ and the potential $V\in C^1(\overline\Omega)$ is bounded from below away from zero. If $v_k\in C(\overline{\Omega})$ is a sequence of blow-up solutions  for the problem  \eqref{eq000} satisfying \eqref{unifbound}-\eqref{eqq000}-\eqref{eqq0000}, then 
along a subsequence
\beq\label{cuccu}\la|x|^2V(x)e^{v_k}\rightharpoonup 8\pi (N+1) \de_0\eeq
weakly in the measure sense, where $\de_0$ denotes the Dirac delta with pole at $0$. Moreover, by setting $\beta_k$ as the maximum point of $v_k$ in $\Omega$: $$\beta_k\to 0 \, : \;v_k(\beta_k)=\max_{\Omega} v_k(x)\to +\infty,$$
the following holds\footnote{We use the notation $\sim$ to denote quantities which in the limit $\la\to 0^+$ are of the same order. }: 
\beq\label{cucu} \frac1\la_k\sim e^{\frac{v_k(\beta_k)}{2}} \eeq and 
\beq\label{cucucu}v_k(x)= \log \frac{e^{v_k(\beta_k)}}{(1+ \frac{\la_k V(\beta_k)}{8\al^2 }e^{v_k(\beta_k)}|x^{N+1}-b_k|^2)^2}+O(1) \hbox{ in }\Omega\eeq where $b_k=\beta_k^{N+1}$.
\end{prop}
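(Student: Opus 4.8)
The plan is to obtain \eqref{cuccu}--\eqref{cucucu} from three standard ingredients for the singular Liouville equation: the decay $\lambda_k\to0$, the concentration of the mass density $\lambda_k|x|^{2N}Ve^{v_k}$ at the origin with total weight $8\pi(N+1)$, and the uniform estimate of \cite{bart3}, which describes a bubbling sequence up to a bounded error. It is convenient to observe that with $\mathfrak u_k:=v_k+\log\lambda_k$ the equation in \eqref{eq000} becomes $\Delta\mathfrak u_k+|x|^{2N}Ve^{\mathfrak u_k}=0$, i.e.\ exactly \eqref{t-u-k} with $\mathrm H_k=V$; throughout I write $\alpha=N+1$.

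\emph{Step 1 (concentration).} Since $-\Delta v_k=\lambda_k|x|^{2N}Ve^{v_k}\ge0$ and $v_k|_{\partial\Omega}=0$, the maximum principle gives $v_k\ge0$, whence $\into|x|^{2N}Ve^{v_k}\,dx\ge\into|x|^{2N}V\,dx\ge c_0>0$; together with \eqref{unifbound} this bounds $\{\lambda_k\}$, so along a subsequence $\lambda_k\to\lambda_\infty<\infty$. By \eqref{eqq0000}--\eqref{eqq000} the blow-up set is exactly $\{0\}$, and a Brezis--Merle type argument for the bounded coefficient $\lambda_k|x|^{2N}V$ gives $\lambda_k|x|^{2N}Ve^{v_k}\rightharpoonup\lambda_\infty|x|^{2N}Ve^{v_\infty}+\rho\,\delta_0$ with $v_\infty$ the $C^1_{loc}(\overline\Omega\setminus\{0\})$ limit of $v_k$. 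If $\lambda_\infty>0$ then $v_\infty$ is superharmonic near $0$ with $-\Delta v_\infty\ge\rho\,\delta_0$, so $v_\infty\ge-\tfrac{\rho}{2\pi}\log|x|-C$ there and $\lambda_\infty|x|^{2N}Ve^{v_\infty}\gtrsim|x|^{2N-\rho/(2\pi)}$ would fail to be integrable at the origin (the local mass at a quantized singular source is at least $8\pi(N+1)$, so $\rho/(2\pi)\ge4(N+1)$; see \cite{kuo-lin-jdg,bart3}); hence $\lambda_\infty=0$, the absolutely continuous part vanishes, and $v_k\to\rho\,G(\cdot,0)$ in $C^1_{loc}(\overline\Omega\setminus\{0\})$.

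\emph{Step 2 (profile and quantization).} On a fixed ball $B_{\tau_0}\subset\subset\Omega$ the sequence $v_k$ is a bubbling sequence for $\Delta v+\lambda_k|x|^{2N}Ve^{v}=0$ with $\lambda_k\to0$ and blow-up only at $0$, so the uniform estimate of \cite{bart3} applies: up to a quantity that is $O(1)$ uniformly on $B_{\tau_0}$, $v_k$ equals a global solution of $\Delta W+\lambda_kV(0)|x|^{2N}e^{W}=0$, which by the Prajapat--Tarantello classification \cite{prajapat} is $W=\log\big(e^{\mu}/(1+\tfrac{\lambda_kV(0)}{8\alpha^2}e^{\mu}|x^{N+1}-b|^2)^2\big)$. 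The $N+1$ maxima of $W$, at the roots of $x^{N+1}=b$, all lie at height $\mu$; matching them to the global maximum $\beta_k$ of $v_k$ forces $\mu=v_k(\beta_k)+O(1)$ and $b=\beta_k^{N+1}=:b_k$, and replacing $V(0)$ by $V(\beta_k)$ costs only $o(1)$ in the exponent since $V\in C^1$ and $\beta_k\to0$; this proves \eqref{cucucu} on $B_{\tau_0}$, and in particular $\rho=\lim_k\lambda_k\into|x|^{2N}Ve^{v_k}\,dx=8\pi(N+1)$, which is \eqref{cuccu}. For \eqref{cucu}, substitute \eqref{cucucu} into $\lambda_k\into|x|^{2N}Ve^{v_k}\,dx\to8\pi(N+1)$: after the $\alpha$-sheeted change of variables $w=x^{N+1}$ (under which $|x|^{2N}\,dx=\alpha^{-1}\,dw$) and the natural bubble rescaling, the left-hand side is $8\pi(N+1)(1+o(1))$ precisely when $\lambda_ke^{v_k(\beta_k)/2}$ is bounded away from $0$ and $\infty$; any other balance contradicts the quantization, so $1/\lambda_k\sim e^{v_k(\beta_k)/2}$. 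Finally \eqref{cucucu} extends to all of $\Omega$: for $|x|\gtrsim\tau_0$ its right-hand side equals $-4(N+1)\log|x|+\mathrm{const}+o(1)$ by \eqref{cucu}, which agrees with $8\pi(N+1)G(x,0)=-4(N+1)\log|x|+O(1)$ up to $O(1)$, consistently with the $C^1_{loc}$ limit of Step 1.

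\emph{Main obstacle.} The substance of the argument is Step 2: invoking \cite{bart3} and, especially, pinning the pole of the approximating bubble to $b_k=\beta_k^{N+1}$ uniformly over the dichotomy between simple blow-up (where $|\beta_k|$ is comparable to the bubble scale) and genuine non-simple blow-up (where it is much larger) — a single formula must cover both. The one genuinely delicate computation is the mass evaluation behind \eqref{cucu}, where one must check that the uniform $O(1)$ error in the exponent of \eqref{cucucu} perturbs the bubble integral only by a factor $1+o(1)$, so that the quantization $8\pi(N+1)$ really does pin down $1/\lambda_k\sim e^{v_k(\beta_k)/2}$.
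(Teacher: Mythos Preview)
Your overall strategy coincides with the paper's: pass to $\bar v_k=v_k+\log\lambda_k$, so that the equation becomes $\Delta\bar v_k+|x|^{2N}Ve^{\bar v_k}=0$ with bounded mass, and then invoke the uniform estimate of \cite{bart3} to obtain \eqref{cucucu}. Your Step~1 is a reasonable (if somewhat heavy) way to justify $\lambda_k\to 0$ and the concentration \eqref{cuccu}; the paper simply takes these as part of the Bartolucci--Tarantello package.

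There is, however, a genuine gap in your derivation of \eqref{cucu}. You claim that after substituting \eqref{cucucu} into the mass integral and rescaling, the value $8\pi(N+1)$ is attained \emph{precisely when} $\lambda_k e^{v_k(\beta_k)/2}$ is bounded away from $0$ and $\infty$. This is false: the bubble mass is scale--invariant. Writing $s_k=\tfrac{\lambda_kV(\beta_k)}{8\alpha^2}e^{v_k(\beta_k)}$, the change of variables $w=x^{N+1}$ followed by $w=b_k+s_k^{-1/2}z$ gives
\[
\lambda_k\int_\Omega|x|^{2N}Ve^{v_k}\,dx
=e^{O(1)}\,\frac{\lambda_k e^{v_k(\beta_k)}}{(N+1)\,s_k}\int\frac{dz}{(1+|z|^2)^2}
=e^{O(1)}\cdot 8\pi(N+1),
\]
for \emph{every} sequence with $s_k\to\infty$, regardless of whether $\lambda_k e^{v_k(\beta_k)/2}$ is bounded. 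The quantization therefore carries no information about this balance, and the sentence ``any other balance contradicts the quantization'' is unsupported. (Concretely, $\lambda_k=e^{-v_k(\beta_k)/3}$ would pass your mass test but violate \eqref{cucu}.)

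The fix is the paper's one--line argument, which uses the only piece of data you have not exploited: the Dirichlet condition. Evaluate \eqref{cucucu} at any $x\in\partial\Omega$, where $v_k(x)=0$ and $|x^{N+1}-b_k|\sim 1$, to obtain
\[
0=v_k(\beta_k)-2\log\big(1+c\,\lambda_k e^{v_k(\beta_k)}\big)+O(1),
\]
hence $\log\lambda_k+v_k(\beta_k)=\tfrac12 v_k(\beta_k)+O(1)$, i.e.\ $1/\lambda_k\sim e^{v_k(\beta_k)/2}$. This also makes your final extension of \eqref{cucucu} to all of $\Omega$ non--circular, since at present that step already presupposes \eqref{cucu}.
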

\begin{proof}
We need to transform the equation into \eqref{eq000} into a Liouville equation with no boundary condition we can use the presence of the  free parameter  under the transformation
$$\bar{v}_k(x)= v_k(x)+\log \la_k$$
so that the sequence $\bar v_k$ satisfies 
$$\left\{\begin{aligned} & -\Delta \bar v_k= V(x)|x|^{2N}e^{\bar v_k}&\hbox{in }&\Omega,\\ & \into |x|^{2(N+1)}V(x)e^{\bar v_k}dx \leq C\end{aligned}\right.$$
and $0$ is the only blowup point of $\bar v_k$.
Then, by applying the  uniform estimate provided in   Theorem 1.4 of  \cite{bart3} for a sequence of solutions having a single blow up point:
$$\bar v_k(x)= \log \frac{e^{\bar v_k(\beta_k)}}{(1+ \frac{V(\beta_k)}{8\al^2 }e^{\bar v_k(\beta_k)}|x^{N+1}-b_k|^2)^2}+O(1)\hbox{ uniformly in }\Omega.$$
and \eqref{cucucu} follows. By evaluating \eqref{cucucu} for $x\in\partial \Omega$ (where $v_k(x)=0$) we get \eqref{cucu}.

\end{proof}

\subsection*{The Case of the Ball: Necessary Conditions for Blowing Up}
In this section we focus on the case when $N=1$ and $\Omega$ is the unit ball $B_1$ and we establish  an integral estimate for bubbling situation at 0. 
\begin{prop}\label{necessary} Assume that the sequence $v_k$ satisfies \eqref{eq0}-\eqref{eqq0} . Then, if $V$ verifies \eqref{eqq1}, the following holds
$$\frac{\partial v_k}{\partial x_i}(0)=o(1), \quad \int_{B_1} \la_k e^{v_k(x)}\bigg(\frac{\partial V(x)}{\partial x_1}x_1-\frac{\partial V(x)}{\partial x_2}x_2\bigg) dx=o(1).$$
\end{prop}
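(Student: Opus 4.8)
The plan is to read both statements off the two Pohozaev identities (Propositions \ref{propo1} and \ref{propo2}), specialized to $N=1$ and $\Omega=B_1$, after first recording the asymptotic data furnished by Proposition \ref{lililili}. Since $\max_{B_1}v_k\to+\infty$, Proposition \ref{lililili} gives $\la_k\to 0$ together with the weak convergence $\mu_k:=\la_k|x|^2V(x)e^{v_k}\,dx\rightharpoonup 16\pi\,\de_0$ on $\overline B_1$ (here $8\pi(N+1)=16\pi$). Using the Green representation $v_k(x)=\int_{B_1}G(x,y)\,d\mu_k(y)$, the concentration of $\mu_k$ at the origin and the bound \eqref{eqq0}, one gets $v_k\to 16\pi G(\cdot,0)=-8\log|x|$ in $C^1$ on a fixed neighbourhood of $\partial B_1$; in particular, on $\partial B_1$ (where $v_k\equiv 0$) we have $\nabla v_k=(\partial_\nu v_k)\nu$ with $\nu=x$ and $\partial_\nu v_k\to -8$ uniformly, so $|\partial_\nu v_k|^2\to 64$ uniformly on $\partial B_1$.

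\textbf{Step 1 (vanishing of $\partial_{x_i}v_k(0)$).} I would apply Proposition \ref{propo1} with $N=1$ and $i=1,2$. On $\partial B_1$ the relation $\partial_{x_i}v_k=(\partial_\nu v_k)x_i$ combines the two gradient boundary integrals into $-\tfrac12\int_{\partial B_1}|\partial_\nu v_k|^2x_i\,d\sigma=-\tfrac12\int_{\partial B_1}(|\partial_\nu v_k|^2-64)x_i\,d\sigma-32\int_{\partial B_1}x_i\,d\sigma=o(1)$, by uniform convergence and the symmetry $\int_{\partial B_1}x_i\,d\sigma=0$. The term $\la_k\int_{\partial B_1}|x|^2V\nu_i\,d\sigma=O(\la_k)=o(1)$. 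For the term involving $\partial_{x_i}H(0,y)$, I would use that for the unit ball $H(\cdot,0)\equiv 0$, so $y\mapsto\partial_{x_i}H(0,y)$ extends continuously to $\overline B_1$ with $\partial_{x_i}H(0,0)=0$, whence $\la_k\int_{B_1}\partial_{x_i}H(0,y)|y|^2Ve^{v_k}\,dy=\int_{B_1}\partial_{x_i}H(0,\cdot)\,d\mu_k\to 16\pi\,\partial_{x_i}H(0,0)=0$. On the right-hand side, $\la_k\int_{B_1}|x|^2\partial_{x_i}V\,e^{v_k}\,dx=\int_{B_1}\tfrac{\partial_{x_i}V}{V}\,d\mu_k\to 16\pi\,\tfrac{\partial_{x_i}V(0)}{V(0)}=0$, since $\nabla V(0)=0$ and $\tfrac{\partial_{x_i}V}{V}\in C(\overline B_1)$. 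Hence Proposition \ref{propo1} collapses to $-4\pi\,\partial_{x_i}v_k(0)=o(1)$, i.e.\ $\partial_{x_i}v_k(0)=o(1)$ for $i=1,2$.

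\textbf{Step 2 (vanishing of the integral).} I would then apply Proposition \ref{propo2} with $N=1$, so $\Re(x^N)=x_1$, $\Im(x^N)=x_2$, $\Re(x^{N-1})=1$, $\Im(x^{N-1})=0$. On $\partial B_1$ one has $\tfrac{\Re(x^N)}{|x|^{2N}}\nu_1-\tfrac{\Im(x^N)}{|x|^{2N}}\nu_2=x_1^2-x_2^2$, so $\tfrac12\int_{\partial B_1}|\partial_\nu v_k|^2(x_1^2-x_2^2)\,d\sigma=32\int_{\partial B_1}(x_1^2-x_2^2)\,d\sigma+o(1)=o(1)$ by uniform convergence and symmetry, while $\la_k\int_{\partial B_1}V(x_1^2-x_2^2)\,d\sigma=O(\la_k)$. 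For the inner boundary term, since $v_k\in C^1(\overline B_1)$ by standard regularity, for each fixed $k$
\[\lim_{\e\to0^+}\frac{1}{2\e}\int_{\partial B_\e}\Big(|\partial_{x_1}v_k|^2-|\partial_{x_2}v_k|^2\Big)d\sigma=\pi\Big(|\partial_{x_1}v_k(0)|^2-|\partial_{x_2}v_k(0)|^2\Big).\]
Because the identity of Proposition \ref{propo2} holds for every small $\e$ with remainder tending to $0$ as $\e\to0$, letting $\e\to0$ (for fixed $k$) gives
\[\la_k\int_{B_1}e^{v_k}\Big(\partial_{x_1}V\,x_1-\partial_{x_2}V\,x_2\Big)dx=\big(\text{boundary terms on }\partial B_1\big)-\pi\big(|\partial_{x_1}v_k(0)|^2-|\partial_{x_2}v_k(0)|^2\big).\]
The first group on the right is $o(1)$ by the lines above and the second is $o(1)$ by Step 1, which proves the claim.

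\textbf{Expected main obstacle.} The one genuinely delicate point is the $\partial B_\e$-term in Proposition \ref{propo2}: the limits $\e\to0$ and $k\to\infty$ must be taken in the right order. For a fixed solution $v_k$, which is $C^1$ up to the origin, sending $\e\to0$ first collapses the $\partial B_\e$-term to $\pi(|\partial_{x_1}v_k(0)|^2-|\partial_{x_2}v_k(0)|^2)$; only afterwards does one send $k\to\infty$ and invoke $\partial_{x_i}v_k(0)=o(1)$ from Step 1. A secondary technical ingredient, used for all the $\partial B_1$-integrals in both identities, is the uniform $C^1$ convergence $v_k\to -8\log|x|$ near $\partial B_1$, which follows from the Green representation of $v_k$, the mass concentration of $\mu_k$ at $0$, and the bound \eqref{eqq0}.
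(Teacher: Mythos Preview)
Your proposal is correct and follows essentially the same approach as the paper: specialize the two Pohozaev identities to $N=1$, $\Omega=B_1$, use the uniform convergence $\partial_\nu v_k\to -8$ on $\partial B_1$ together with the symmetry of the circle to kill the outer boundary terms, use the weak convergence $\la_k|x|^2Ve^{v_k}\rightharpoonup 16\pi\de_0$ and $\nabla V(0)=0$ to kill the interior terms, and send $\e\to0$ for fixed $k$ to reduce the $\partial B_\e$-term to $\pi(|\partial_{x_1}v_k(0)|^2-|\partial_{x_2}v_k(0)|^2)$. The only cosmetic difference is that the paper computes $\partial_{x_i}H(0,y)=-y_i/(2\pi)$ explicitly before applying the weak convergence, whereas you argue more abstractly from $H(\cdot,0)\equiv 0$ that $\partial_{x_i}H(0,0)=0$; both routes lead to the same conclusion.
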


\begin{proof}

Consider Propositions \ref{propo1} for $N=1$ and $\Omega=B_1$: taking into account that $\nu=(x_1, x_2)$ on $ \partial B_1$, we get that the homogeneous boundary condition implies $$\nabla v_k=	\frac{\partial v_k}{\partial \nu}\nu=\frac{\partial v_k}{\partial \nu}(x_1,x_2)\hbox{ on }\partial B_1.$$ On the other hand the regular part $H(x,y)$ of the Green's function takes the form $H(x,y)=\frac{1}{4\pi}\log(1+|x|^2|y|^2-2x\cdot y) $ for $ x,y\in B_1$ (where $``\cdot"$ denotes the scalar product in $\R^2$), which gives  $$\frac{\partial H}{\partial x_i}(0,y)=-\frac{y_i}{2\pi}.$$ We deduce that for every $k$  the Pohozaev identity in Proposition \ref{propo1} takes the forms:

 \beq\label{milleuno}\begin{aligned}\frac12\int_{\partial B_1}\bigg|\frac{\partial v_k}{\partial \nu}\bigg|^2  x_i dx-&2N\la_k\int_{B_1}y_i|y|^2V(y) e^{v_k} dy +\la_k\int_{\partial B_1}x_i |x|^{2}V(x)  dx
\\ &=4N\pi\frac{\partial v_k}{\partial x_i}(0) +\la_k\into |x|^{2} \frac{\partial V}{\partial x_i}(x) e^{v_k}dx.\end{aligned}\eeq

Next observe that for any fixed $k$, since $v$ is two-times differentiable by standard regularity theory, 
 $$\frac{1}{\e}\int_{\partial B_\e}\bigg|\frac{\partial v_k}{\partial x_i}\bigg|^2 d\sigma=2\pi\bigg|\frac{\partial v_k}{\partial x_i}(0)\bigg|^2+o(1)\hbox{ as }\e\to 0^+$$ 
so that the Pohozaev identity in Proposition \ref{propo2} gives

\beq\label{milledue}\begin{aligned}\int_{B_1} \la e^{v_k(x)}&\bigg(\frac{\partial V(x)}{\partial x_1}x_1-\frac{\partial V(x)}{\partial x_2}x_2\bigg) dx
-\la_k\int_{\partial B_1}  V(x)\big(x_1^2-x_2^2\big)d\sigma\\ &=\frac12\int_{\partial B_1}\bigg|\frac{\partial v_k}{\partial \nu}\bigg|^2 \big(x_1^2-x_2^2\big) d\sigma-\pi \bigg|\frac{\partial v_k}{\partial x_1}(0)\bigg|^2+\pi\bigg|\frac{\partial v_k}{\partial x_2}(0)\bigg|^2
.\end{aligned}\eeq

We need to estimate $\frac{\partial v_k}{\partial \nu}$ on $\partial B_1$: by Poisson representation formula we have

$$\frac{\partial v_k}{\partial \nu}(x)=\la_k\int_{B_1} \frac{\partial G}{\partial \nu_x}(x-y)|y|^2V(y)e^{v_k(y)}dy\quad x\in\partial B_1.$$

Since the Green's function for the unit ball takes  the form $G(x,y)=\frac{1}{2\pi}\log\frac{1}{|x-y|}+\frac{1}{4\pi}\log\big(1+|x|^2|y|^2-2x\cdot y\big)$ for $x,y\in B_1$, 
so we have  $$\frac{\partial G}{\partial \nu_x}(x-y)=\frac{1}{2\pi}\frac{(y-x)\cdot x}{|x-y|^2}+\frac{1}{2\pi}\frac{(x|y|^2-y)\cdot x}{1+|y|^2-2x\cdot y}\qquad \forall x\in\partial B_1$$
and so 
$$\frac{\partial v_k}{\partial \nu}(x)=\frac{\la_k}{2\pi}\int_{B_1} \bigg(\frac{(y-x)\cdot x}{|x-y|^2}+\frac{(x|y|^2-y)\cdot x}{1+|y|^2-2x\cdot y}\bigg)|y|^{2}V(y)e^{v_k(y)} dy\quad \forall x\in\partial B_1.$$
By using \eqref{cuccu} and \eqref{cucucu}, the above integral formula gives 
$$\frac{\partial v_k}{\partial \nu}(x)\to  -8\hbox{ unif. for } x\in\partial B_1$$
We have thus proved that 
\beq\label{seiuno}\begin{aligned}\int_{\partial B_1} \bigg|\frac{\partial v_k}{\partial \nu}\bigg|^2\big(x_1^2-x_2^2) d\sigma&=64  \int_{\partial B_1}(x_1^2-x_2^2) d\sigma+o(1)=o(1)
\end{aligned}
\eeq
since $\int_{\partial B_1}(x_1^2-x_2^2) d\sigma=0$. Similarly
\beq\label{seidue}\begin{aligned}\int_{\partial B_1} \bigg|\frac{\partial v_k}{\partial \nu}\bigg|^2x_i d\sigma&=64 \int_{\partial B_1}x_i d\sigma+o(1)=o(1).
\end{aligned}
\eeq
Moreover again by \eqref{cuccu}, using that $\nabla V(0)=0$ by \eqref{eqq1}, we have 
$$\la_k\int_{B_1}y_i|y|^2V(y) e^{v_k} dy =o(1), \quad \la_k\int_{B_1} |x|^{2} \frac{\partial V}{\partial x_i}(x) e^{v_k}dx=o(1), $$ and $$\la_k\int_{\partial B_1}  V(x)\big(x_1^2-x_2^2\big)d\sigma=o(1)$$
By inserting the above estimates into \eqref{milleuno}-\eqref{milledue} we deduce the thesis.
\end{proof}

\subsection*{Proof of  Theorem \ref{th2}}
In this section we exploit the integral estimates obtained in Proposition \ref{necessary} in order to obtain necessary conditions for  the presence of bubbling solutions at $0$  for problem \eqref{eq0}-\eqref{eqq0}. 
In the following we assume that  $v_k$ is a sequence satisfying \eqref{eq0}-\eqref{eqq0} and the potential $V$ verifies \eqref{eqq1}. Moreover, after suitably rotating the coordinate system, we may assume that in a small neighborhood of $0$ the following expansion holds:
$$V(x)=V(0)+ \frac{a_{11}x_1^2+a_{22}x_2^2}{2}+o(|x|^2) \hbox{ as }x\to 0,$$
where $a_{ii}= \frac{\partial^2 V}{\partial x_i^2}(0),$ so that \beq\label{exx}\frac{\partial V}{\partial x_1}(0)x_1-\frac{\partial V}{\partial x_2}(0)x_2= a_{11}x_1^2-a_{22}x_2^2 +o(|x|^2).\eeq

 Now assume, in addition, that the sequence $v_k$ has the non-simple blow-up property.  It has been shown in
\cite{wei-zhang-jems} that Laplacian of coefficient functions must be  zero, i.e. $\Delta V(0)=0$ or, equivalently, $a_{11}+a_{22}=0$. Then $$  \la_k\int_{B_1} e^{v_k(x)}\bigg(\frac{\partial V}{\partial x_1}x_1-\frac{\partial V}{\partial x_2}x_2\bigg) dx=(a_{11}+o(1))\la_k\int_{B_1} |x|^2e^{v_k(x)}dx\to 16\pi\frac{a_{11}}{V(0)}.$$
Combining this convergence with that proved in Proposition \ref{necessary} we deduce \beq\label{both}a_{11}=a_{22}=0.\eeq
Then Theorem \ref{th2} follows.

  \begin{rem}\label{fails} We point out that if we try to apply our technique to $N\geq 2$, then the analogous of Proposition \ref{necessary} would give
$$\la_k\int_{B_1}  e^{v_k(x)}\bigg(\frac{\partial V(x)}{\partial x_1}\Re(x^N)-\frac{\partial V(x)}{\partial x_2}\Im(x^N)\bigg) dx=o(1).$$
Assume that the sequence $v_k$ has the non-simple blow-up property, so that, according to 
\cite{wei-zhang-jems} we have $\Delta V(0)=0$ or, equivalently, $a_{11}+a_{22}=0$.
Then, using Lemma \ref{lemma0} the above estimates becomes
$$\la_ka_{11}\int_{B_1}|x|^2  e^{v_k(x)}\Re(x^{N-1})dx+ \la_k\int_{B_1}o(|x|^{N+1})e^{v_k} dx 
=o(1).$$
Observe that only if $N=1$ the above integrals can be estimated using  \eqref{cuccu}, whereas for $N\geq 2$ they cannot be handled by \eqref{cuccu}.  This will explains why our method based on the combination of two Pohozaev identities 
fails to provide the results of Theorem \ref{th2} for $N\geq 2$. 
\end{rem}

\end{document}